\numberwithin{equation}{section}
\theoremstyle{definition}
\numberwithin{equation}{section}
\newtheorem{theorem}{\bf Theorem}[section]
\newtheorem{remark}{\bf Remark}[section]
\newtheorem{proposition}{Proposition}[section]
\newtheorem{example}{Example}[section]
\newtheorem{definition}{Definition}[section]
\newtheoremstyle
{remarkstyle}
{}
{11pt}
{}
{}
{\bfseries}
{:}
{     }
{\thmname{#1} \thmnumber{#2} }
\theoremstyle{remarkstyle}
\begin{document}
	\title{On Two Parameter Time-Changed Poisson Random Fields with Drifts}
	\author[Pradeep Vishwakarma]{Pradeep Vishwakarma}
	\address{Pradeep Vishwakarma, Department of Mathematics,
		Indian Institute of Technology Bhilai, Durg, 491002, INDIA.}
	\email{pradeepv@iitbhilai.ac.in}
	\author[Manisha Dhillon]{Manisha Dhillon}
	\address{Manisha Dhillon, Department of Mathematics,
		Indian Institute of Technology Bhilai, Durg, 491002, INDIA.}
	\email{manishadh@iitbhilai.ac.in}

	\author[Kuldeep Kumar Kataria]{Kuldeep Kumar Kataria}
	\address{Kuldeep Kumar Kataria, Department of Mathematics,
		 Indian Institute of Technology Bhilai, Durg, 491002, INDIA.}
	 \email{kuldeepk@iitbhilai.ac.in}

	\subjclass[2010]{Primary : 60G22, 60G51; Secondary: 60G55}
	
	\keywords{Poisson random field; bivariate stable subordinator; random drift, anomalous diffusion, two parameter L\'evy process}
	\date{\today}	
	\maketitle
\begin{abstract}
We study the composition of bivariate L\'evy process with bivariate inverse subordinator. The explicit expressions for its dispersion and auto correlation matrices are obtained. Also, the time-changed two parameter L\'evy processes with rectangular increments are studied.  We introduce  some time-changed variants of the Poisson random field in plane with and without drift, and derive the associated fractional differential equations for their distributions. Later, we consider some time-changed L\'evy processes where the time-changing components are two parameter Poisson random fields with drifts. Moreover, two parameter coordinatewise semigroup operators associated with some of the introduced processes are discussed.
\end{abstract}
	
\section{Introduction} \label{sec1.1}
In the past few decades, the time-changed one parameter L\'evy processes have been extensively studied with various applications in different fields (see \cite{Meerschaert2011}, \cite{Meerschaert2012}, \cite{Straka2011}). Due to their connection with fractional operators and semi-Markov dynamics, these processes are used to study the anomalous diffusion in complex systems (see \cite{Benson2000}, \cite{Magdziarz2011}, \cite{Magdziarz2015}). A time-changed L\'evy process with inverse stable subordinator as time changing component is a scaling limit of a continuous-time random walk (see \cite{Meerschaert2008}).
 
Multiparameter L\'evy processes are natural extension of the  one parameter L\'evy processes and are of interest in the field of analysis because their densities appear as solutions of many differential equations involving multiparameter fractional operators. In \cite{Khoshnevisan2002a}, the intersection of multiparameter Markov and L\'evy processes with other branches of mathematics, such as, functional analysis, analytic number theory, \textit{etc}, are discussed. 

The Poisson random field (PRF) on $\mathbb{R}^2$ is a two parameter point process which represents the random arrangement  of points in the plane. It is a statistical model for analyzing the pattern of points which represent the spatial locations of different objects. It has potential applications in various science and engineering fields such as astronomy, ecology and telecommunication, \textit{etc}.

Let $\mathcal{B}$ be the Borel sigma algebra on $\mathbb{R}^d$, $d\ge1$, and let $|B|$ denote the Lebesgue measure of set $B\in\mathcal{B}$. The PRF on $\mathbb{R}^d$ is an integer valued random measure $\{\mathscr{N}(B),\ B\in\mathcal{B}\}$, where $\mathscr{N}(B)$ denotes the random number of points inside a Borel set $B$. It is also called the spatial Poisson point process. For disjoint sets $B_i\in\mathcal{B}$, $i=1,2,\dots,m$, the random variables $\mathscr{N}(B_1)$, $\mathscr{N}(B_2),\dots,\mathscr{N}(B_m)$ are mutually independent. Also, for $B\subset{\mathbb{R}^d}$ with $|B|<\infty$, there exist $\lambda>0$, known as the parameter of PRF, such that the random points count $N(B)$ in set $B$ has Poisson distribution with mean $\lambda|B|$, that is, (see \cite{Stoyen1995})
\begin{equation*}\label{dprfdist}
	\mathrm{Pr}\{\mathscr{N}(B)=k\}=\frac{e^{-\lambda|B|}(\lambda|B|)^k}{k!},\ k\ge0.
\end{equation*}
 The void probability of PRF is $\mathrm{Pr}\{\mathscr{N}(B)=0\}=e^{-\lambda|B|}$.

Let $\mathbb{R}_+=[0,\infty)$ denote the set of non-negative real numbers, and let us consider the PRF $\{\mathscr{N}(t_1,t_2), (t_1,t_2)\in\mathbb{R}^2_+\}$, where
$
\mathscr{N}(t_1,t_2)=\mathscr{N}([0,t_1]\times[0,t_2])$.
Here, $\mathscr{N}(t_1,t_2)$ denotes the total number of random points inside the rectangle $[0,t_1]\times[0,t_2]$. For $(s_1,s_2)\prec (t_1,t_2)$, that is, $s_1\leq t_1$ and $s_2\leq t_2$, the increment of PRF on rectangle $(s_1,t_1]\times (s_2,t_2]$ is defined by (see \cite{Merzbach1986})
\begin{equation*}
	\mathscr{N}((s_1,t_1]\times(s_2,t_2])=\mathscr{N}(t_1,t_2)-\mathscr{N}(s_1,t_2)-\mathscr{N}(t_1,s_2)+\mathscr{N}(s_1,s_2).
\end{equation*}
The PRF $\{\mathscr{N}(t_1,t_2),\ (t_1,t_2)\in\mathbb{R}^2_+\}$ on $\mathbb{R}^2_+$ is a c\`adl\`ag random field such that $\mathscr{N}(0,t_2)=\mathscr{N}(t_1,0)=0$ almost surely. The increment $\mathscr{N}((s_1,t_1]\times(s_1,t_2])$ is a Poisson random variable with parameter $\lambda(t_1-s_1)(t_2-s_2)$. Moreover, on taking $s_1=s_2=0$, we get the distribution $p(k,t_1,t_2)=\mathrm{Pr}(\mathscr{N}(t_1,t_2)=k)$ of PRF as follows:
\begin{equation}\label{prfdist}
	p(k,t_1,t_2)=\frac{e^{-\lambda t_1t_2}(\lambda t_1t_2)^k}{k!},\ k\ge0.
\end{equation}

In \cite{Beghin2014}, the composition of independent one parameter stable subordinator and inverse stable subordinator has been studied. Also, they introduced and studied various fractional Poisson processes with random drifts, and their composition with an independent L\'evy process. Recently, in \cite{Khandakar2025}, a generalized counting process with random drift has been studied.

The time-changed multivariate L\'evy processes are useful in the modeling of sub-diffusion dynamics in anisotropic medium where the trapping effect can be different for each coordinate directions (see \cite{Beghin2020}, \cite{Iafrate2024}).
In the past two decades, the time-changed multiparameter L\'evy processes have been studied by many researcher (see \cite{Barndorff2001}, \cite{Beghin2020}, \cite{Pedersen2003}-\cite{Pedersen2004b}). In \cite{Beghin2020}, the composition of bivariate Markov processes with bivariate inverse subordinator are studied.

In this paper, we study the composition of a bivariate subordinator and two parameter bivariate inverse subordinator. Its double Laplace transform with respect to space and time variables is obtained, and the governing differential equation for its density is derived. Also, we study the time-changed bivariate and two parameter L\'evy  processes. The explicit expressions for their mean vector, dispersion and auto correlation matrices are derived. 
Moreover, we study some time-changed variants of the PRF with and without drifts. It is observed that these processes reduce to some known fractional variants of the PRF as particular cases. The coordinatewise two parameter semigroup operators associated with  some of these processes are discussed.

This study is motivated by many well established theories on time-changed one parameter and multiparameter random processes, for some recent study in this direction, we refer the reader to \cite{Beghin2020}, \cite{DeGregorio2024}, \cite{Iafrate2024}, \cite{Leonenko2014}, \cite{Vishwakarma2025}, and references therein.

The outline of this paper is as follows:

In Section \ref{sec2}, first, we collect some results on one parameter stable subordinator, inverse stable subordinator and their composition. Thereafter, we study the composition of bivariate two parameter subordinator and inverse subordinator. Also, the explicit expression for the auto covariance matrix of a time-changed bivariate L\'evy process  is obtained.
In Section \ref{sec3}, we study some time-changed variants of the PRF. Their distribution and associated governing differential equations are derived.
In Section \ref{sec4}, we study various Poisson random fields with random drifts. The explicit expressions of their densities
are obtained. Also, the associated two parameter semigroup operators are discussed.
In Section \ref{sec5}, some time-changed L\'evy processes are studied where the time-changing components are PRF with and without drifts. Their stationary increments property is established, and some of their characterizations are obtained.

\section{Bivariate subordinator and its inverse}\label{sec2}	Here, we derive some results on the composition of two parameter bivariate subordinator and inverse subordinator. First, we collect some known results on the stable, inverse stable subordinators and their composition that will be used. 

 A subordinator $\{S^{\sigma}(t),\ t\ge0\}$ is a non-decreasing L\'evy process with continuous sample path (see \cite{Applebaum2004}). It is characterized by the following Laplace transform: $\mathbb{E}e^{-\eta S^\sigma(t)}=e^{-t\sigma(\eta)}$, $\eta>0$, where the Bern\v stein function $\sigma(\eta)=\int_{0}^{\infty}(1-e^{-\eta s})\,\nu(\mathrm{d}s)$ is known as the Laplace exponent of $S^\sigma(t)$. Here, $\nu$ is the L\'evy measure, that is, it satisfies $\nu(-\infty,0)=0$ and $\int_{0}^{\infty}\min\{1,s\}\nu(\mathrm{d}s)<\infty$.
 
 Let $g(x,t)$, $x\ge0$, $t\ge0$ be the density of $S^\sigma(t)$. Then, it solves the following differential equation (see \cite{Kolokoltsov2009}):
 \begin{equation}\label{subgeq}
 	\partial_tg(x,t)=-\mathcal{D}_xg(x,t),
 \end{equation}
 where the operator $\mathcal{D}_x$ is the generalized fractional derivative defined as 
 \begin{equation}\label{gfder}
 	\mathcal{D}_xf(x)=\int_{0}^{\infty}(f(x)-f(x-y))\,\nu(\mathrm{d}y).
 \end{equation}
 
 The right-continuous inverse $\{L^\sigma(t),\ t\ge0\}$ of subordinator $\{S^\sigma(t),\ t\ge0\}$ is known as the inverse subordinator. It is defined as 
 \begin{equation}\label{insubdef}
 	L^\sigma(t)=\inf\{u>0:S^\sigma(u)>t\}.
 \end{equation}
 Let $f(x,t)$, $x\ge0$, $t\ge0$ be the density of $L^\sigma(t)$. Then, it solves the following differential equation:
 \begin{equation}\label{insubequ}
 	\mathcal{D}_tf(x,t)=-\partial_xf(x,t),
 \end{equation}
 with $f(x,0)=\delta(x)$ and $f(0,t)=\int_{t}^{\infty}\nu(\mathrm{d}s)$.
 
 Let us consider the following time-changed process:
 \begin{equation}\label{compo}
 	H^{\sigma,\rho}(t)=S^\sigma(L^\rho(t)),\ t\ge0,
 \end{equation}
 where the subordinator $\{S^\sigma(t),\ t\ge0\}$ is independent of the inverse subordinator $\{L^\rho(t),\ t\ge0\}$.

Let $h(x,t)$, $x\ge0$, $t\ge0$ be the density of  (\ref{compo}). Then, it is given by
\begin{equation}\label{compoden}
	h(x,t)=\int_{0}^{\infty}g(x,s)f(s,t)\,\mathrm{d}s.
\end{equation}

Next, we derive the governing equation for the density (\ref{compoden}).
\begin{proposition}
	The density $h(x,t)=\mathrm{Pr}\{H^{\sigma,\rho}(t)\in\mathrm{d}x\}/\mathrm{d}x$ solves the following differential equation:
	\begin{equation*}
		\mathcal{D}_t h(x,t)=-\mathcal{D}_x h(x,t)+h(x,0)\int_{t}^{\infty}\nu(\mathrm{d}s),
	\end{equation*}
	where the operator $\mathcal{D}_t$ is defined in (\ref{gfder}). Its double Laplace transform is given by
	\begin{equation*}
		\int_{0}^{\infty}e^{-zt}\mathbb{E}e^{-\eta H^{\sigma,\rho}(t)}\,\mathrm{d}t=\frac{\rho(z)}{z(\sigma(\eta)+\rho(z))},\ z>0,\ \eta>0.
	\end{equation*}
\end{proposition}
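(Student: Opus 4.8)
The plan is to exploit the subordination representation (\ref{compoden}), $h(x,t)=\int_0^\infty g(x,s)f(s,t)\,\mathrm{d}s$, together with the two governing equations (\ref{subgeq}) and (\ref{insubequ}) already recorded for the marginal densities $g$ and $f$. For the differential equation I would first apply the nonlocal operator $\mathcal{D}_t$, which acts only on the time variable, under the $\mathrm{d}s$-integral, so that $\mathcal{D}_t h(x,t)=\int_0^\infty g(x,s)\,\mathcal{D}_t f(s,t)\,\mathrm{d}s$; this interchange is a Fubini-type step since $\mathcal{D}_t$ is itself an integration against the Lévy measure $\nu$. Using the inverse-subordinator equation (\ref{insubequ}) in the form $\mathcal{D}_t f(s,t)=-\partial_s f(s,t)$ converts the time operator into an $s$-derivative, giving $\mathcal{D}_t h(x,t)=-\int_0^\infty g(x,s)\,\partial_s f(s,t)\,\mathrm{d}s$.

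The second step is an integration by parts in $s$. This produces a boundary term $-[\,g(x,s)f(s,t)\,]_{s=0}^{s=\infty}$ together with $\int_0^\infty \partial_s g(x,s)\,f(s,t)\,\mathrm{d}s$. The contribution at $s=\infty$ vanishes because the densities decay, and the term at $s=0$ equals $g(x,0)f(0,t)$. Here I would use $g(x,0)=\delta(x)$ (the law of $S^\sigma(0)=0$) and the boundary value $f(0,t)=\int_t^\infty\nu(\mathrm{d}s)$ supplied with (\ref{insubequ}); moreover $h(x,0)=\int_0^\infty g(x,s)f(s,0)\,\mathrm{d}s=g(x,0)=\delta(x)$ since $f(s,0)=\delta(s)$, so the boundary term is exactly $h(x,0)\int_t^\infty\nu(\mathrm{d}s)$. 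In the remaining integral I substitute the subordinator equation (\ref{subgeq}), $\partial_s g(x,s)=-\mathcal{D}_x g(x,s)$, and pull the $x$-operator back out of the $\mathrm{d}s$-integral to recognise $-\mathcal{D}_x h(x,t)$. Collecting the two pieces yields the stated equation $\mathcal{D}_t h=-\mathcal{D}_x h+h(x,0)\int_t^\infty\nu(\mathrm{d}s)$.

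For the double Laplace transform I would argue probabilistically. Conditioning on $L^\rho(t)$ and using independence together with $\mathbb{E}e^{-\eta S^\sigma(s)}=e^{-s\sigma(\eta)}$ gives $\mathbb{E}e^{-\eta H^{\sigma,\rho}(t)}=\mathbb{E}e^{-\sigma(\eta)L^\rho(t)}=\int_0^\infty e^{-\sigma(\eta)s}f(s,t)\,\mathrm{d}s$. Taking the Laplace transform in $t$ and interchanging the order of integration reduces everything to the time-Laplace transform $\tilde f(s,z)=\int_0^\infty e^{-zt}f(s,t)\,\mathrm{d}t$ of the inverse-subordinator density. This last object I would compute from (\ref{insubequ}): its time-Laplace transform turns $\mathcal{D}_t$ into multiplication by $\rho(z)$ (for $s>0$, where $f(s,0)=0$ so no initial correction survives), so $\rho(z)\tilde f(s,z)=-\partial_s\tilde f(s,z)$, whence $\tilde f(s,z)=\tilde f(0,z)e^{-s\rho(z)}$; evaluating the boundary transform $\tilde f(0,z)=\int_0^\infty e^{-zt}\int_t^\infty\nu(\mathrm{d}s)\,\mathrm{d}t=\rho(z)/z$ by Fubini and the definition of the Bernstein function gives $\tilde f(s,z)=(\rho(z)/z)e^{-s\rho(z)}$. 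Substituting and performing the elementary $\mathrm{d}s$-integral produces $\frac{\rho(z)}{z}\cdot\frac{1}{\sigma(\eta)+\rho(z)}$, the claimed expression. Alternatively, the same formula drops out by taking the double Laplace transform of the differential equation just proved, using that the symbols of $\mathcal{D}_x$ and $\mathcal{D}_t$ are $\sigma(\eta)$ and $\rho(z)$ and that the forcing term has double transform $\rho(z)/z$.

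The routine part is the algebra; the genuine obstacle is the analytic justification of the formal manipulations — interchanging the nonlocal operators $\mathcal{D}_t,\mathcal{D}_x$ and the Laplace transforms with the $\mathrm{d}s$-integral, the vanishing of the boundary term at $s=\infty$, and above all the careful treatment of the $\delta$-function data at $s=0$ and $x=0$, which is precisely where the inhomogeneous term $h(x,0)\int_t^\infty\nu(\mathrm{d}s)$ is generated. Making these rigorous requires suitable integrability and decay hypotheses on $g$ and $f$ so that the boundary term and the Fubini swaps are legitimate, and this is the step I would spend the most care on.
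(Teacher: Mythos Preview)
Your proposal is correct and follows essentially the same route as the paper: apply $\mathcal{D}_t$ under the integral in (\ref{compoden}), use (\ref{insubequ}) to convert to $-\partial_s f$, integrate by parts to pick up the boundary term $g(x,0)f(0,t)=h(x,0)\int_t^\infty\nu(\mathrm{d}s)$, and then use (\ref{subgeq}) to recognise $-\mathcal{D}_x h$; for the double Laplace transform, condition on $L^\rho(t)$ and integrate against the known time-Laplace transform $\tilde f(s,z)=(\rho(z)/z)e^{-s\rho(z)}$. The only difference is that the paper simply cites this last formula from \cite{Beghin2020}, \cite{Kolokoltsov2009} rather than deriving it from (\ref{insubequ}) as you outline, and the paper does not dwell on the analytic justifications you flag at the end.
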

\begin{proof}
	On applying the operator $\mathcal{D}_t$ on both sides of (\ref{compoden}), and using the fact that it commutes with integral, we have
	\begin{align*}
		\mathcal{D}_t h(x,t)&=\int_{0}^{\infty}g(x,s)\mathcal{D}_t f(s,t)\,\mathrm{d}s\\
		&=-\int_{0}^{\infty}g(x,s)\partial_sf(s,t)\,\mathrm{d}s,\ \ (\text{using (\ref{insubequ})}),\\
		&=-\int_{0}^{\infty}\partial_s(g(x,s)f(s,t))\,\mathrm{d}s+\int_{0}^{\infty}\partial_sg(x,s)f(s,t)\,\mathrm{d}s\\
		&=g(x,0)f(0,t)-\int_{0}^{\infty}\mathcal{D}_x g(x,s)f(s,t)\,\mathrm{d}s,\ \ (\text{using (\ref{subgeq})}),\\
		&=g(x,0)\int_{t}^{\infty}\nu(\mathrm{d}s)-\mathcal{D}_x h(x,t).
	\end{align*}
	On using the fact that $g(x,0)=h(x,0)$, we get the required governing equation.
	
	The Laplace transform of $H^{\sigma,\rho}(t)$ is
	\begin{align*}
		\int_{0}^{\infty}e^{-zt}\mathbb{E}e^{-\eta H^{\sigma,\rho}(t)}\,\mathrm{d}t&=\int_{0}^{\infty}\mathbb{E}e^{-\eta S^\sigma(x)}\int_{0}^{\infty}e^{-zt}f(x,t)\,\mathrm{d}t\,\mathrm{d}x\\
		&=\frac{\rho(z)}{z}\int_{0}^{\infty}e^{-x(\sigma(\eta)+\rho(z))}\,\mathrm{d}x=\frac{\rho(z)}{z(\sigma(\eta)+\rho(z))},\ z>0,\ \eta>0, \\
	\end{align*}
	where we have used that $\int_{0}^{\infty}e^{-zt}f(x,t)\,\mathrm{d}t=e^{-x\rho(z)}\rho(z)/z$, $z>0$ (see \cite{Beghin2020}, \cite{Kolokoltsov2009}). This completes the proof.
\end{proof}
 
\subsection{Stable subordinator and its inverse}  A subordinator $\{S_\alpha(t),\ t\ge0\}$, $\alpha\in(0,1)$ is called $\alpha$-stable subordinator if its Laplace transform is $\mathbb{E}e^{-\eta S_\alpha(t)}=e^{-t\eta^\alpha}$, $\eta>0$ (see \cite{Applebaum2004}). 
Its first passage time process $\{L_\alpha(t),\ t\ge0\}$ defined as $L_\alpha(t)\coloneqq\inf\{s>0:S_\alpha(s)>t\},\ t>0$ is called an inverse $\alpha$-stable subordinator. It is a non-decreasing process with continuous sample path such that $L_\alpha(0)=0$ with probability one.
Note that $\{S_\alpha(t),\ t\ge0\}$ and $\{L_\alpha(t),\ t\ge0\}$ are inverse processes, that is, $\mathrm{Pr}\{L_\alpha(t)<x\}=\mathrm{Pr}\{S_\alpha(x)>t\}$. So, the density function $f_\alpha(\cdot,t)$ of $L_\alpha(t)$ is given by
$f_\alpha(x,t)=-\partial_x\int_{0}^{t}g_\alpha(w,x)\,\mathrm{d}w,\ x>0$, 
where $g_\alpha(\cdot,x)$ is the density of $S_\alpha(x)$.
It is given by $f_\alpha(x,t)=t^{-\alpha}W_{-\alpha,1-\alpha}(-xt^{-\alpha})$, $x\ge0$, $t>0$. Also, the density function of $S_\alpha(t)$ is $g_\alpha(x,t)=\alpha tx^{-1}f_\alpha(t,x)$ (see \cite{Beghin2014}), where $W_{\sigma,\rho}(\cdot)$ is the Wright function defined by (see \cite{Kilbas2006}):
\begin{equation*}
	W_{\sigma,\rho}(x)=\sum_{k=0}^{\infty}\frac{x^k}{\Gamma(k\sigma+\rho)k!},\ \rho>0,\, \sigma>-1,\,x\in\mathbb{R}.
\end{equation*}

The Laplace transform of $f_\alpha(x,t)$ with respect to space variable is given by $\mathbb{E}e^{-\eta L_\alpha(t)}=E_{\alpha,1}(-\eta t^\alpha)$, $\eta>0$, whereas with respect to time variable is (see \cite{Meerschaert2011})
\begin{equation}\label{inslap}
	\int_{0}^{\infty}e^{-zt}f_\alpha(x,t)\,\mathrm{d}t=z^{\alpha-1}e^{-z^\alpha x},\ z>0.
\end{equation}
Here, $E_{\alpha,1}(\cdot)$ is the one parameter Mittag-Leffler function defined as follows (see \cite{Kilbas2006}):
\begin{equation}\label{mittag1}
	E_{\alpha,1}(x)=\sum_{k=0}^{\infty}\frac{x^k}{\Gamma(k\alpha+1)},\ \alpha>0,\,x\in\mathbb{R}.
\end{equation}  

\subsubsection{Composition of stable and inverse stable subordinators} Let $\{S_\alpha(t),\ t\ge0\}$, $\alpha\in(0,1]$  and  $\{L_\beta(t),\ t\ge0\}$, $\beta\in(0,1]$ be independent $\alpha$-stable subordinator and inverse $\beta$-stable subordinator, respectively. Let us consider the following composition process studied in \cite{Beghin2014}:
\begin{equation}\label{driftdef}
	H^{\alpha,\beta}(t)=S_\alpha(L_\beta(t)),\ t\ge0.
\end{equation} 
Its density function $f_{\alpha,\beta}(x,t)=\mathrm{Pr}\{H^{\alpha,\beta}(t)\in\mathrm{d}x\}/\mathrm{d}x$ is given by 
\begin{equation}\label{driftdist}
	f_{\alpha,\beta}(x,t)=\int_{0}^{\infty}f_\alpha(x,s)g_\beta(s,t)\,\mathrm{d}s=\frac{\alpha}{x^{1+\alpha}t^\beta}\int_{0}^{\infty}sW_{-\alpha,1-\alpha}(-sx^{-\alpha})W_{-\beta,1-\beta}(-st^{-\beta})\,\mathrm{d}s,\ x>0.
\end{equation}
It solves the following fractional differential equation (see \cite{Beghin2014}):
\begin{equation}\label{driftequ}
	\mathcal{D}^\beta_tf_{\alpha,\beta}(x,t)=-\partial^\alpha_x f_{\alpha,\beta}(x,t),\ x\ge0,\, t>0,
\end{equation} 
with initial condition $f_{\alpha,\beta}(x,0)=\delta(x)$ and boundary condition $f_{\alpha,\beta}(0,t)=0$. Here, $\mathcal{D}^\beta_t$ is the Caputo fractional derivative operator defined by
\begin{equation}\label{caputo}
	\mathcal{D}^\beta_tg(t)=\partial^\beta_tg(t)-g(0^{+})\frac{t^{-\beta}}{\Gamma(1-\beta)}=\frac{1}{\Gamma(1-\beta)}\int_{0}^{t}(t-s)^{-\beta}f'(s)\,\mathrm{d}s,\ 0<\beta<1,
\end{equation}
with $\mathcal{D}_t^1=\partial_t$. And, $\partial^\alpha_x$ is the Riemann-Liouville fractional derivative defined as follows:
\begin{equation}\label{rlder}
	\partial^\alpha_xh(x)=\frac{1}{\Gamma(1-\alpha)}\partial_x\int_{0}^{x}(x-y)^{-\alpha}h(y)\,\mathrm{d}y,\ 0<\alpha<1,
\end{equation}
with $\partial_x^1=\partial_x$.

The Laplace transform of $H^{\alpha,\beta}(t)$ is given by 
\begin{equation}\label{driftlap}
	\mathbb{E}e^{-\eta H^{\alpha,\beta}(t)}=E_{\beta,1}(-t^\beta\eta^\alpha),\ \eta>0.
\end{equation}
 Its Laplace transform with respect to time variable is 
$\int_{0}^{\infty}e^{-zt}E_{\beta,1}(-t^\beta\eta^\alpha)\,\mathrm{d}t={z^{\beta-1}}/{(z^\beta+\eta^\alpha)}$, $z>0$, $\eta>0.
$
Thus, 
\begin{equation}\label{driftlap2}
	\int_{0}^{\infty}e^{-zt}f_{\alpha,\beta}(x,t)\,\mathrm{d}t=z^{\beta-1}x^{\alpha-1}E_{\alpha,\alpha}(-z^\beta x^\alpha),\ x\ge0,\,z>0,
\end{equation}
where $E_{\alpha,\beta}(\cdot)$ is the two parameter Mittag-Leffler function defined as (see \cite{Kilbas2006})
\begin{equation}
	E_{\alpha,\beta}(x)=\sum_{k=0}^{\infty}\frac{x^k}{\Gamma(k\alpha+\beta)},\ \alpha>0,\,\beta\in\mathbb{R},\,x\in\mathbb{R}.
\end{equation}

\subsection{Bivariate subordinator and its inverse} Let us recall some known results on the bivariate subordinator and its inverse, for more details, we refer the reader to \cite{Beghin2020} and \cite{Iafrate2024}. 

A bivariate L\'evy process $\{(S_1(t_1)$, $S_2(t_2)),\ (t_1,t_2)$ $\in\mathbb{R}^2_+\}$ is called the bivariate two parameter subordinator if its marginals are almost surely non-decreasing, that is, $\{S_i(t_i),\ t_i\ge0\}$, $i=1,2$ are one parameter subordinators. Its double Laplace transform is given by (see \cite{Beghin2020}, Proposition 2.4)
\begin{align}\label{bvsublap} 
	\mathbb{E}e^{-\eta_1S_1(t_1)-\eta_2S_2(t_2)}&=e^{-t_1B(\eta_1,\eta_2)}e^{-(t_2-t_1)B_2(\eta_2)}\textbf{1}_{\{t_2\ge t_1\}}\nonumber\\
	&\ \ +e^{-t_2B(\eta_1,\eta_2)}e^{-(t_1-t_2)B_1(\eta_1)}\textbf{1}_{\{t_1>t_2\}},\ \eta_1>0,\ \eta_2>0.
\end{align}
Here,
\begin{equation}\label{mbif}
	B_i(\eta_i)=\int_{0}^{\infty}(1-e^{-\eta_i x})\nu_i(\mathrm{d}x),\ i=1,2,
\end{equation}
are  Bern\v stein functions and
\begin{equation}\label{bibernsfncn}
	B(\eta_1,\eta_2)=\int\int_{\mathbb{R}^2_+}(1-e^{-\eta_1x_1-\eta_2x_2})\nu(\mathrm{d}x_1,\mathrm{d}x_2)
\end{equation}
is a bivariate Bern\v stein function,
where $\nu(\cdot,\cdot)$ is the L\'evy measure of associated bivariate subordinator such that
\begin{equation*}
	\int_{0}^{\infty} \int_{0}^{\infty}\min\Big\{1,\sqrt{x_1^2+x_2^2}\Big\}\nu(\mathrm{d}x_1,\mathrm{d}x_2)<\infty.
\end{equation*}

Let $g_i(x_i,t_i)$ be the density of $\{S_i(t_i), \ t_i\ge 0\}$, $i=1,2$. For $t_1\ne t_2$, let $g(x_1,x_2,t_1,t_2)$, $t_1>0$, $t_2>0$ be the density of $(S_1(t_1),S_2(t_2))$, and for $t_1=t_2=t$, let it be denoted by $g_*(x_1,x_2,t)$, $t>0$. The following holds true: 
\begin{equation}\label{incond1}
	\lim_{t_1\to0}g(x_1,x_2,t_1,t_2)=\delta_0(x_1)g_2(x_2,t_2)
\end{equation}
and
\begin{equation}\label{incond2}
	\lim_{t_2\to0}g(x_1,x_2,t_1,t_2)=\delta_0(x_2)g_1(x_1,t_1),
\end{equation}
with
\begin{equation}\label{inicondt0}
	\mathrm{Pr}\{S_1(0)\in\mathrm{d}x_1,S_2(0)\in\mathrm{d}x_2\}=\delta_0(\mathrm{d}x_1)\delta_0(\mathrm{d}x_2),
\end{equation}
where $\delta_0$ denotes the standard Dirac delta measure. 

The following result will be used (see \cite{Beghin2020}, Proposition 2.5):
\begin{proposition}
	For $t_1\ne t_2$, the joint density $g(x_1,x_2,t_1,t_2)$ solves
	\begin{equation}\label{subeq1}
		(\partial_{t_1}+\partial_{t_2})g(x_1,x_2,t_1,t_2)=-\mathcal{D}_{x_1,x_2}g(x_1,x_2,t_1,t_2),\ t_1>0,\,t_2>0,
	\end{equation}
	with initial conditions \eqref{incond1} and \eqref{incond2}. For $t_1=t_2=t$, we have
	\begin{equation}\label{subeq2}
		\partial_{t}g_*(x_1,x_2,t)=-\mathcal{D}_{x_1,x_2}g_*(x_1,x_2,t), \ t>0,
	\end{equation}
	with initial condition \eqref{inicondt0}. Here, the operator $\mathcal{D}_{t_1,t_2}$ is defined as follows:
	\begin{equation}\label{opt2}
		\mathcal{D}_{t_1,t_2}f(t_1,t_2)=\int_{0}^{\infty}\int_{0}^{\infty}(f(t_1,t_2)-f(t_1-s_1,t_2-s_2))\,\nu(\mathrm{d}s_1,\mathrm{d}s_2).
	\end{equation} 
\end{proposition}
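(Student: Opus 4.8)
The plan is to establish both identities by passing to the double Laplace transform in the spatial variables $x_1,x_2$ and exploiting the explicit form \eqref{bvsublap}. Write
\[
\tilde{g}(\eta_1,\eta_2,t_1,t_2)=\int_{0}^{\infty}\int_{0}^{\infty}e^{-\eta_1x_1-\eta_2x_2}g(x_1,x_2,t_1,t_2)\,\mathrm{d}x_1\,\mathrm{d}x_2=\mathbb{E}e^{-\eta_1S_1(t_1)-\eta_2S_2(t_2)},
\]
and denote by $\tilde{g}_*$ the corresponding transform on the diagonal $t_1=t_2=t$. The first ingredient is to identify the Laplace symbol of the operator $\mathcal{D}_{x_1,x_2}$ defined in \eqref{opt2}. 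Using the shift property of the Laplace transform for densities supported on the positive quadrant, namely $\int_{0}^{\infty}\int_{0}^{\infty}e^{-\eta_1x_1-\eta_2x_2}f(x_1-s_1,x_2-s_2)\,\mathrm{d}x_1\,\mathrm{d}x_2=e^{-\eta_1s_1-\eta_2s_2}\tilde{f}(\eta_1,\eta_2)$, and integrating against $\nu(\mathrm{d}s_1,\mathrm{d}s_2)$, one obtains
\begin{equation*}
	\int_{0}^{\infty}\int_{0}^{\infty}e^{-\eta_1x_1-\eta_2x_2}\mathcal{D}_{x_1,x_2}f(x_1,x_2)\,\mathrm{d}x_1\,\mathrm{d}x_2=B(\eta_1,\eta_2)\,\tilde{f}(\eta_1,\eta_2),
\end{equation*}
where $B$ is the bivariate Bern\v stein function \eqref{bibernsfncn}. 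Thus $\mathcal{D}_{x_1,x_2}$ acts as multiplication by $B(\eta_1,\eta_2)$ in the transform domain.

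Next I would transform the two claimed equations. For the off-diagonal case \eqref{subeq1}, applying $(\partial_{t_1}+\partial_{t_2})$ to the left side and the symbol identity to the right side reduces the assertion to verifying
\begin{equation*}
	(\partial_{t_1}+\partial_{t_2})\tilde{g}(\eta_1,\eta_2,t_1,t_2)=-B(\eta_1,\eta_2)\,\tilde{g}(\eta_1,\eta_2,t_1,t_2),\quad t_1\ne t_2.
\end{equation*}
This is a direct differentiation of \eqref{bvsublap} performed separately on each region: on $\{t_2>t_1\}$ one has $\partial_{t_1}\tilde{g}=(B_2(\eta_2)-B(\eta_1,\eta_2))\tilde{g}$ and $\partial_{t_2}\tilde{g}=-B_2(\eta_2)\tilde{g}$, whose sum is precisely $-B(\eta_1,\eta_2)\tilde{g}$; the region $\{t_1>t_2\}$ is symmetric with $B_1(\eta_1)$ replacing $B_2(\eta_2)$. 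For the diagonal case \eqref{subeq2}, setting $t_1=t_2=t$ in \eqref{bvsublap} gives $\tilde{g}_*(\eta_1,\eta_2,t)=e^{-tB(\eta_1,\eta_2)}$, so that $\partial_t\tilde{g}_*=-B(\eta_1,\eta_2)\tilde{g}_*$, which inverts via the symbol identity to \eqref{subeq2}.

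Finally I would match the initial conditions by taking limits in \eqref{bvsublap}: as $t_1\to0$ with $t_2>0$ fixed the transform tends to $e^{-t_2B_2(\eta_2)}$, the double Laplace transform of $\delta_0(x_1)g_2(x_2,t_2)$, giving \eqref{incond1}; the limit $t_2\to0$ is symmetric and yields \eqref{incond2}; and $t\to0$ on the diagonal gives $\tilde{g}_*\to1$, the transform of $\delta_0(\mathrm{d}x_1)\delta_0(\mathrm{d}x_2)$, recovering \eqref{inicondt0}. The conclusion then follows from uniqueness of the double Laplace transform. I expect the main technical obstacle to be not the algebra but the justification of interchanging the operator $\mathcal{D}_{x_1,x_2}$ and the time derivatives with the Laplace integral: one must supply the Fubini and dominated-convergence arguments that make differentiation under the integral sign and the symbol computation legitimate, and confirm that the transform-domain identities invert to genuine (distributional) solutions on the positive quadrant rather than merely holding formally.
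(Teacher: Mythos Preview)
Your argument is correct: the double spatial Laplace transform turns $\mathcal{D}_{x_1,x_2}$ into multiplication by $B(\eta_1,\eta_2)$, and differentiating the explicit expression \eqref{bvsublap} on each region $\{t_1<t_2\}$, $\{t_1>t_2\}$ and on the diagonal gives exactly the claimed identities, with the initial conditions recovered as the stated limits. The technical caveats you flag (Fubini, differentiation under the integral, and inversion in the distributional sense) are the right ones to mention.

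Note, however, that the present paper does not supply its own proof of this proposition: it is quoted verbatim from \cite{Beghin2020}, Proposition~2.5, and stated here only for later use. So there is no in-paper argument to compare yours against. For what it is worth, the proof in \cite{Beghin2020} proceeds along essentially the same Laplace-transform route you outline.
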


Let us consider the inverse process $\{(L_1(t_1), L_2(t_2)),\ (t_1,t_2)\in\mathbb{R}^2_+\}$ of $\{(S_1(t_1),S_2(t_2)),\ (t_1,t_2)\in\mathbb{R}^2_+\}$ defined as 
\begin{equation*}
	L_i(t_i)=\inf\{u_i>0:S_i(u_i)>t_i\},\ i=1,2.
\end{equation*}
For $i=1,2$, the density function $f_i(x_i,t_i)$ of $L_i(t_i)$  solves the following generalized fractional Cauchy problem:
\begin{equation}
	\partial_{x_i}f_i(x_i,t_i)=-\mathcal{D}_{t_i}f_i(x_i,t_i),
\end{equation}
with $f_i(0,t_i)=\mathcal{D}_{t_i}\mathcal{H}(t_i)=\nu_i^*(t_i)$, where $\mathcal{H}(\cdot)$ is the Heaviside step function and $\nu_i^*(t_i)$ is the tail of L\'evy measure $\nu_i$, that is, $\nu_i^*(t_i)=\nu_i[t_i,\infty)$.

The Laplace transform of density function $h_i(x_i,t_i)$ with respect to time variable is given by (see \cite{Beghin2020})
\begin{equation}\label{lapinvbiv}
	\int_{0}^{\infty}e^{-\eta_i t_i}f_i(x_i,t_i)\,\mathrm{d}t_i=\frac{B_i(\eta_i)}{\eta_i}e^{-B_i(\eta_i)x_i}.
\end{equation}
Also, its space-time Laplace transform is 
\begin{equation}\label{doublapinvbiv}
	\int_{0}^{\infty}\int_{0}^{\infty}e^{-\eta_it_i}e^{-\theta_ix_i}f_i(x_i,t_i)\,\mathrm{d}x_i\,\mathrm{d}t_i=\frac{B_i(\eta_i)}{\eta_i(\theta_i+B_i(\eta_i))}.
\end{equation}

For $t_1>0$, $t_2>0$, the distribution of $\{(L_1(t_1),L_2(t_2)),\ (t_1,t_2)\in\mathbb{R}^2_+\}$ has an absolute continuous component with associated density
\begin{equation}\label{deninva}
	f(x_1,x_2,t_1,t_2)=\frac{\partial^2}{\partial x_1 \partial x_2}\int_{0}^{t_1}\int_{0}^{t_2}g(s_1,s_2,x_1,x_2)\, \mathrm{d}s_1\, \mathrm{d}s_2,\ x_1\ne x_2,
\end{equation}
where $g(s_1,s_2,x_1,x_2)$ is the density of $(S_1(x_1),S_2(x_2))$, $x_1>0$, $x_2>0$. Moreover, for $x_1=x_2$, the density of $\{(L_1(t_1),L_2(t_2)),\ (t_1,t_2)\in\mathbb{R}^2_+\}$ is given by
\begin{equation}\label{deninvb}
	f_*(x,t_1,t_2)=\frac{\partial^2}{\partial x^2}\int_{0}^{t_1}\int_{0}^{t_2}g_*(s_1,s_2,x)\,\mathrm{d}s_1\, \mathrm{d}s_2.
\end{equation}
The Laplace transform of density functions \eqref{deninva} and \eqref{deninvb} are
{\small\begin{align}\label{bvisublap1}
		\tilde{f}(x_1,x_2,\eta_1,\eta_2)&=\frac{1}{\eta_1\eta_2}B_2(\eta_2)(B(\eta_1,\eta_2)-B_2(\eta_2))\exp(-x_1(B(\eta_1,\eta_2)-B_2(\eta_2))-x_2B_2(\eta_2))\textbf{1}_{\{x_1<x_2\}}\nonumber\\
		&\ \ +\frac{1}{\eta_1\eta_2}B_1(\eta_1)(B(\eta_1,\eta_2)-B_1(\eta_1))\exp(-x_2(B(\eta_1,\eta_2)-B_1(\eta_1))-x_1B_1(\eta_1))\textbf{1}_{\{x_1>x_2\}}
\end{align}}
and 
\begin{equation}\label{bvisublap2}
	\tilde{f_*}(x,\eta_1,\eta_2)=\frac{B_1(\eta_1)+B_2(\eta_2)-B(\eta_1,\eta_2)}{\eta_1\eta_2}e^{-xB(\eta_1,\eta_2)},
\end{equation}
respectively.

Now, we consider the composition of bivariate subordinator and bivariate inverse subordinator, and study some of its distributional properties.

Let $\{(S_1(t_1),S_2(t_2)),\ (t_1,t_2)$ $\in\mathrm{R}^2_+\}$ be a bivariate subordinator, and let $\{(L_1(t_1),L_2(t_2))$, $ (t_1,t_2)\in\mathbb{R}^2_+\}$ be a bivariate inverse subordinator corresponding to a bivariate subordinator that is independent of $\{(S_1(t_1),S_2(t_2)),\ (t_1,t_2)\in\mathrm{R}^2_+\}$. We consider a bivariate process $\{(H_1(t_1),H_2(t_2))$, $(t_1,t_2)\in\mathbb{R}^2_+\}$ defined as follows:
\begin{equation}\label{tpcomp}
	(H_1(t_1),H_2(t_2))\coloneqq(S_1(L_1(t_1)),S_2(L_2(t_2))),\ (t_1,t_2)\in\mathbb{R}^2_+.
\end{equation}

Let $h(x_1,x_2,t_1,t_2)=\mathrm{Pr}\{H_1(t_1)\in\mathrm{d}x_1,H_2(t_2)\in\mathrm{d}x_2\}/\mathrm{d}x_1\mathrm{d}x_2$, $x_i\ge0$, $t_i\ge0$, $i=1,2$ be the joint density of (\ref{tpcomp}), and let $h_i(x_i,t_i)$ be the density of its marginal $H_i(t_i)$. Then, 
\begin{equation}\label{compden}
	h(x_1,x_2,t_1,t_2)=\int_{0}^{\infty}\int_{0}^{\infty}g(x_1,x_2,s_1,s_2)f(s_1,s_2,t_1,t_2)\,\mathrm{d}s_1\,\mathrm{d}s_2 +\int_{0}^{\infty}g_*(x_1,x_2,s)f_*(s,t_1,t_2)\,\mathrm{d}s.
\end{equation}

\begin{proposition}
	The space-time Laplace transform of (\ref{tpcomp}) is given by
	\begin{align*}
		\int_{0}^{\infty}\int_{0}^{\infty}e^{-z_1t_1-z_2t_2}&\mathbb{E}e^{-\eta_1H_1(t_1)-\eta_2H_2(t_2)}\,\mathrm{d}t_1\,\mathrm{d}t_2\\
		&=\frac{1}{z_1z_2(B(\eta_1,\eta_2)+B(z_1,z_2))}\Big(\frac{B_2(z_2)(B(z_1,z_2)-B_2(z_2))}{B_2(\eta_2)+B_2(z_2)}\\ &\ \ +\frac{B_1(z_1)(B(z_1,z_2)-B_1(z_1))}{B_1(\eta_1)+B_1(z_1)}+B_1(z_1)+B_2(z_2)-B(z_1,z_2)\Big).
	\end{align*}
\end{proposition}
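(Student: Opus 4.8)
The plan is to compute the transform in two stages---first in the space variables $(x_1,x_2)$, then in the time variables $(t_1,t_2)$---reducing the whole computation to elementary exponential integrals. First I would take the double space Laplace transform of the density decomposition \eqref{compden}. Integrating $e^{-\eta_1x_1-\eta_2x_2}$ against $g(x_1,x_2,s_1,s_2)$ returns, by definition, the subordinator transform $\mathbb{E}e^{-\eta_1S_1(s_1)-\eta_2S_2(s_2)}$ of \eqref{bvsublap}, while integrating against $g_*(x_1,x_2,s)$ returns $e^{-sB(\eta_1,\eta_2)}$. After a Fubini interchange (legitimate since the densities are nonnegative and the exponential kernels are bounded by $1$), this gives
\begin{equation*}
	\mathbb{E}e^{-\eta_1H_1(t_1)-\eta_2H_2(t_2)}=\int_{0}^{\infty}\int_{0}^{\infty}\mathbb{E}e^{-\eta_1S_1(s_1)-\eta_2S_2(s_2)}f(s_1,s_2,t_1,t_2)\,\mathrm{d}s_1\,\mathrm{d}s_2+\int_{0}^{\infty}e^{-sB(\eta_1,\eta_2)}f_*(s,t_1,t_2)\,\mathrm{d}s.
\end{equation*}

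Next I would apply the double time Laplace transform $\int_0^\infty\int_0^\infty e^{-z_1t_1-z_2t_2}(\cdot)\,\mathrm{d}t_1\,\mathrm{d}t_2$ and again interchange with the $s$-integrals. The key observation is that the time transforms of the inverse-subordinator densities are exactly \eqref{bvisublap1} and \eqref{bvisublap2}, read under the substitution of the space variables $x_i\mapsto s_i$ and the Laplace variables $\eta_i\mapsto z_i$; that is, $f(s_1,s_2,t_1,t_2)$ is replaced by $\tilde f(s_1,s_2,z_1,z_2)$ and $f_*(s,t_1,t_2)$ by $\tilde{f_*}(s,z_1,z_2)$. Both of these are sums of pure exponentials in $s_1,s_2,s$, supported respectively on $\{s_1<s_2\}$, $\{s_1>s_2\}$, and the diagonal. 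The transform thus splits into three elementary integrals matching the three summands of the claimed formula.

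For the main computation, on $\{s_1<s_2\}$ I would use the first branch of \eqref{bvsublap} (valid since $s_2\ge s_1$) together with the $\{s_1<s_2\}$ branch of $\tilde f$, collect the exponents as $e^{-s_1A_1-s_2A_2}$ with $A_1=B(\eta_1,\eta_2)-B_2(\eta_2)+B(z_1,z_2)-B_2(z_2)$ and $A_2=B_2(\eta_2)+B_2(z_2)$, then integrate $s_1$ over $(0,s_2)$ and $s_2$ over $(0,\infty)$. The decisive simplification is that $A_1+A_2=B(\eta_1,\eta_2)+B(z_1,z_2)$, so the nested integral collapses to $1/[A_2(A_1+A_2)]=1/[(B_2(\eta_2)+B_2(z_2))(B(\eta_1,\eta_2)+B(z_1,z_2))]$, producing the first summand. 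By the $1\leftrightarrow2$ symmetry, the region $\{s_1>s_2\}$ yields the second summand, and the diagonal term is the single exponential integral $\int_0^\infty e^{-s(B(\eta_1,\eta_2)+B(z_1,z_2))}\,\mathrm{d}s$ times the prefactor of $\tilde{f_*}$, giving the third summand. Summing the three reproduces the stated expression.

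I expect the main obstacle to be the bookkeeping in this last step: one must correctly pair the indicator regions of the subordinator transform \eqref{bvsublap} with those of $\tilde f$ in \eqref{bvisublap1}, and verify that the cross terms in the exponents telescope to the common factor $B(\eta_1,\eta_2)+B(z_1,z_2)$. The two Fubini interchanges are routine given the nonnegativity of the densities and the boundedness of the exponential kernels.
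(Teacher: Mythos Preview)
Your proposal is correct and follows essentially the same route as the paper: both start from the density decomposition \eqref{compden}, replace the outer subordinator by its Laplace transform \eqref{bvsublap} and the inner inverse subordinator by the transforms \eqref{bvisublap1}--\eqref{bvisublap2}, split the $(s_1,s_2)$-integration into $\{s_1<s_2\}$, $\{s_1>s_2\}$, and the diagonal, and then evaluate the resulting exponential integrals (the paper writes the intermediate step before collapsing via $A_1+A_2=B(\eta_1,\eta_2)+B(z_1,z_2)$, but the computation is the same).
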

\begin{proof}
	From (\ref{compden}), we have
	\begin{align*}
		\int_{0}^{\infty}\int_{0}^{\infty}&e^{-z_1t_1-z_2t_2}\mathbb{E}e^{-\eta_1H_1(t_1)-\eta_2H_2(t_2)}\,\mathrm{d}t_1\,\mathrm{d}t_2\\
		&=\int_{0}^{\infty}\int_{0}^{\infty}\mathbb{E}e^{-\eta_1S_1(x_1)-\eta_2S_2(x_2)}\int_{0}^{\infty}\int_{0}^{\infty}e^{-z_1t_1-z_2t_2}f(x_1,x_2,t_1,t_2)\,\mathrm{d}t_1\,\mathrm{d}t_2\,\mathrm{d}x_1\,\mathrm{d}x_2\\
		&\ \ +\int_{0}^{\infty}\mathbb{E}e^{-\eta_1S_1(x)-\eta_2S_2(x)}\int_{0}^{\infty}\int_{0}^{\infty}e^{-z_1t_1-z_2t_2}f_*(x,t_1,t_2)\,\mathrm{d}t_1\,\mathrm{d}t_2\,\mathrm{d}x.
	\end{align*}
	On splitting the first double integral into two regions, \textit{viz}. $x_1<x_2$ and $x_1>x_2$, and using (\ref{bvsublap}), (\ref{bvisublap1}) and (\ref{bvisublap2}), we get
	\begin{align*}
		\int_{0}^{\infty}\int_{0}^{\infty}&e^{-z_1t_1-z_2t_2}\mathbb{E}e^{-\eta_1H_1(t_1)-\eta_2H_2(t_2)}\,\mathrm{d}t_1\,\mathrm{d}t_2\\
		&=\int_{0}^{\infty}\int_{0}^{x_2}\mathbb{E}e^{-\eta_1S_1(x_1)-\eta_2S_2(x_2)}\int_{0}^{\infty}\int_{0}^{\infty}e^{-z_1t_1-z_2t_2}f(x_1,x_2,t_1,t_2)\,\mathrm{d}t_1\,\mathrm{d}t_2\,\mathrm{d}x_1\,\mathrm{d}x_2\\
		&\ \ +\int_{0}^{\infty}\int_{x_2}^{\infty}\mathbb{E}e^{-\eta_1S_1(x_1)-\eta_2S_2(x_2)}\int_{0}^{\infty}\int_{0}^{\infty}e^{-z_1t_1-z_2t_2}f(x_1,x_2,t_1,t_2)\,\mathrm{d}t_1\,\mathrm{d}t_2\,\mathrm{d}x_1\,\mathrm{d}x_2\\
		&\ \ +\int_{0}^{\infty}\mathbb{E}e^{-\eta_1S_1(x)-\eta_2S_2(x)}\int_{0}^{\infty}\int_{0}^{\infty}e^{-z_1t_1-z_2t_2}f_*(x,t_1,t_2)\,\mathrm{d}t_1\,\mathrm{d}t_2\,\mathrm{d}x\\
		&=\frac{B_2(z_2)(B(z_1,z_2)-B_2(z_2))}{z_1z_2}\int_{0}^{\infty}\int_{0}^{x_2}e^{-x_1B(\eta_1,\eta_2)}e^{-(x_2-x_1)B_2(\eta_2)}\\
		&\hspace{6cm}\cdot\exp(-x_1(B(z_1,z_2)-B_2(z_2))-x_2B_2(z_2))\,\mathrm{d}x_1\,\mathrm{d}x_2\\
		&\ \ +\frac{B_1(z_1)(B(z_1,z_2)-B_1(z_1))}{z_1z_2}\int_{0}^{\infty}\int_{x_2}^{\infty}e^{-x_2B(\eta_1,\eta_2)}e^{-(x_1-x_2)B_1(\eta_1)}\\
		&\hspace{6cm}\cdot\exp(-x_2(B(z_1,z_2)-B_1(z_1))-x_1B_1(z_1))\,\mathrm{d}x_1\,\mathrm{d}x_2\\
		&\ \ +\frac{B_1(z_1)+B_2(z_2)-B(z_1,z_2)}{z_1z_2}\int_{0}^{\infty}e^{-xB(\eta_1,\eta_2)}e^{-xB(z_1,z_2)}\,\mathrm{d}x\\
		&=\frac{B_2(z_2)(B(z_1,z_2)-B_2(z_2))}{z_1z_2(B(\eta_1,\eta_2)-B_2(\eta_2)+B(z_1,z_2)-B_2(z_2))}\\
		&\hspace{5cm}\cdot\int_{0}^{\infty}(e^{-x_2(B_2(\eta_2)+B_2(z_2))}-e^{-x_2(B(\eta_1,\eta_2)+B(z_1,z_2))})\,\mathrm{d}x_2\\
		&\ \ +\frac{B_1(z_1)(B(z_1,z_2)-B_1(z_1))}{z_1z_2(B_1(\eta_1)+B_1(z_1))}\int_{0}^{\infty}e^{-x_2(B(\eta_1,\eta_2)+B(z_1,z_2))}\,\mathrm{d}x_2\\
		&\ \ +\frac{B_1(z_1)+B_2(z_2)-B(z_1,z_2)}{z_1z_2}\int_{0}^{\infty}e^{-x(B(\eta_1,\eta_2)+B(z_1,z_2))}\,\mathrm{d}x.
	\end{align*} 
	On evaluating these integrals, we get the required result.
\end{proof}

Next, we derive the governing equation for the joint density of process defined in (\ref{tpcomp}).
\begin{proposition}
	The joint density function (\ref{compden}) satisfies the following equation:
	\begin{equation}\label{compogeq}
		\mathcal{D}_{t_1,t_2}h(x_1,x_2,t_1,t_2)=-\mathcal{D}_{x_1,x_2}h(x_1,x_2,t_1,t_2),\ x_1>0,\ x_2>0.
	\end{equation}
\end{proposition}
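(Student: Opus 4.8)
The plan is to mimic the computation used for the one-parameter composition proved at the start of this section, now carried out jointly in the two time variables. Applying the operator $\mathcal{D}_{t_1,t_2}$ defined in \eqref{opt2} to both sides of \eqref{compden} and using that it acts only on the variables $t_1,t_2$, so that it commutes with the $s$-integrations and annihilates the factors $g$ and $g_*$ (which are $t$-free), I would obtain
\begin{equation*}
	\mathcal{D}_{t_1,t_2}h=\int_{0}^{\infty}\int_{0}^{\infty}g(x_1,x_2,s_1,s_2)\,\mathcal{D}_{t_1,t_2}f(s_1,s_2,t_1,t_2)\,\mathrm{d}s_1\,\mathrm{d}s_2+\int_{0}^{\infty}g_*(x_1,x_2,s)\,\mathcal{D}_{t_1,t_2}f_*(s,t_1,t_2)\,\mathrm{d}s.
\end{equation*}

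The key ingredient is the two-parameter analogue of the inverse-subordinator equation \eqref{insubequ}, namely that the inverse densities satisfy $\mathcal{D}_{t_1,t_2}f=-(\partial_{s_1}+\partial_{s_2})f$ off the diagonal $\{s_1\ne s_2\}$ and $\mathcal{D}_{t_1,t_2}f_*=-\partial_s f_*$ on it. I would establish these by taking the Laplace transform in $(t_1,t_2)$, under which $\mathcal{D}_{t_1,t_2}$ becomes multiplication by the bivariate Bern\v stein function $B(\eta_1,\eta_2)$ of the subordinator inverted by $(L_1,L_2)$, and then checking the resulting identities directly against the explicit transforms \eqref{bvisublap1} and \eqref{bvisublap2}; for instance, differentiating \eqref{bvisublap1} in its spatial arguments gives $(\partial_{x_1}+\partial_{x_2})\tilde f=-B(\eta_1,\eta_2)\tilde f$, which inverts to the claimed relation, and likewise for $\tilde{f_*}$. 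Substituting these into the display above converts the two time-nonlocal terms into the spatial derivatives $-(\partial_{s_1}+\partial_{s_2})f$ and $-\partial_s f_*$.

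The final step is to transfer the $s$-derivatives onto $g$ and $g_*$ by integration by parts and then invoke the subordinator equations \eqref{subeq1} and \eqref{subeq2}, which give $(\partial_{s_1}+\partial_{s_2})g=-\mathcal{D}_{x_1,x_2}g$ off the diagonal and $\partial_s g_*=-\mathcal{D}_{x_1,x_2}g_*$ on it; since $\mathcal{D}_{x_1,x_2}$ commutes with the $s$-integrals, reassembling the two terms reproduces exactly $-\mathcal{D}_{x_1,x_2}h$. The main obstacle is the boundary-term bookkeeping in this integration by parts. The contributions at $s_i\to\infty$ vanish by decay of the subordinator densities, and those at $s_i\to 0$ vanish because, by \eqref{incond1}, \eqref{incond2} and \eqref{inicondt0}, $g$ and $g_*$ degenerate there to masses carrying a factor $\delta_0(x_i)$, which is zero for $x_1>0$, $x_2>0$---this is precisely where the restriction $x_1>0$, $x_2>0$ in the statement is used. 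The delicate point is the diagonal $\{s_1=s_2\}$: splitting the double integral into the regions $s_1<s_2$ and $s_1>s_2$ produces interface terms along $s_1=s_2$, and one must verify that these dovetail with the single diagonal integral so that no spurious boundary contribution survives. I expect this diagonal matching, which hinges on the consistency of the off-diagonal density $g$ with $g_*$ and of $f$ with $f_*$ along $\{s_1=s_2\}$, to be the crux of the argument.
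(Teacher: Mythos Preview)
Your proposal is correct and follows essentially the same route as the paper: apply $\mathcal{D}_{t_1,t_2}$ under the integrals, replace it by $-(\partial_{s_1}+\partial_{s_2})$ on $f$ and $-\partial_s$ on $f_*$ (the paper simply cites this as Theorem~3.6 of \cite{Beghin2020} rather than rederiving it via \eqref{bvisublap1}--\eqref{bvisublap2}), integrate by parts, and use \eqref{subeq1}--\eqref{subeq2} together with \eqref{incond1}--\eqref{inicondt0} to see that the boundary contributions carry factors $\delta_0(x_i)$ and hence vanish for $x_1>0$, $x_2>0$. The paper's computation is in fact less explicit than yours about the diagonal matching, which it does not isolate as a separate issue.
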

\begin{proof}
	On applying the operator $\mathcal{D}_{t_1,t_2}$  on both sides of (\ref{compden}) and using Theorem 3.6 of \cite{Beghin2020}, we have 
	\begin{align*}
		\mathcal{D}_{t_1,t_2}h(x_1,x_2,t_1,t_2)&=\int_{0}^{\infty}\int_{0}^{\infty}g(x_1,x_2,s_1,s_2)\mathcal{D}_{t_1,t_2}f(s_1,s_2,t_1,t_2)\,\mathrm{d}s_1\,\mathrm{d}s_2\\
		&\ \  +\int_{0}^{\infty}g_*(x_1,x_2,s)\mathcal{D}_{t_1,t_2}f_*(s,t_1,t_2)\,\mathrm{d}s\\
		&=-\int_{0}^{\infty}\int_{0}^{\infty}g(x_1,x_2,s_1,s_2)\partial_{s_1}f(s_1,s_2,t_1,t_2)\,\mathrm{d}s_1\,\mathrm{d}s_2\\
		&\ \ -\int_{0}^{\infty}\int_{0}^{\infty}g(x_1,x_2,s_1,s_2)\partial_{s_2}f(s_1,s_2,t_1,t_2)\,\mathrm{d}s_1\,\mathrm{d}s_2\\
		&\ \ -\int_{0}^{\infty}g_*(x_1,x_2,s)\partial_{s}f_*(s,t_1,t_2)\,\mathrm{d}s.
	\end{align*}
	On integrating by part, and by using (\ref{incond1}), (\ref{incond2}), (\ref{subeq1}) and (\ref{subeq2}), we get
	\begin{align*}
		\mathcal{D}_{t_1,t_2}&h(x_1,x_2,t_1,t_2)\\
		&=\int_{0}^{\infty}\int_{0}^{\infty}\partial_{s_1}g(x_1,x_2,s_1,s_2)f(s_1,s_2,t_1,t_2)\,\mathrm{d}s_1\,\mathrm{d}s_2+\delta_0(x_1)\int_{0}^{\infty}g_2(x_2,s_2)f(0,s_2,t_1,t_2)\,\mathrm{d}s_2\\
		&\ \ +\int_{0}^{\infty}\int_{0}^{\infty}\partial_{s_2}g(x_1,x_2,s_1,s_2)f(s_1,s_2,t_1,t_2)\,\mathrm{d}s_1\,\mathrm{d}s_2+\delta_0(x_2)\int_{0}^{\infty}g_1(x_1,s_1)f(s_1,0,t_1,t_2)\,\mathrm{d}s_1\\
		&\ \ +\int_{0}^{\infty}\partial_{s}g_*(x_1,x_2,s)f_*(s,t_1,t_2)\,\mathrm{d}s+\delta_0(x_1)\delta_0(x_2)\nu^*(t_1,t_2)\\
		&=-\mathcal{D}_{x_1,x_2}h(x_1,x_2,t_1,t_2)+\delta_0(x_1)\int_{0}^{\infty}g_2(x_2,s_2)f(0,s_2,t_1,t_2)\,\mathrm{d}s_2\\
		&\ \ + \delta_0(x_2)\int_{0}^{\infty}g_1(x_1,s_1)f(s_1,0,t_1,t_2)\,\mathrm{d}s_1+\delta_0(x_1)\delta_0(x_2)\nu^*(t_1,t_2).
	\end{align*}
	This completes the proof.
\end{proof}
\subsection{Time-changed bivariate L\'evy process} Beghin \textit{et al.} \cite{Beghin2020} studied a time-changed bivariate Markov process $(X_1(L_1(t_1)),X_2(L_2(t_2)))$, where $\{(X_1(t_1),X_2(t_2)),\ (t_1,t_2)\in\mathbb{R}^2_+\}$ is a bivariate Markov process whose marginals are independent and $(X_1(0),X_2(0))=(0,0)$ almost surely, and $\{(L_1(t_1),L_2(t_2)),\ (t_1,t_2)\in\mathbb{R}^2_+\}$ is a bivariate inverse subordinator that is independent of $\{(X_1(t_1),X_2(t_2)),\ (t_1,t_2)\in\mathbb{R}^2_+\}$. They derived the governing equation for the joint density function of introduced process.

Here, we study a time-changed bivariate L\'evy process. We derive the general expression for its auto correlation matrix. Let $\{(Y_1(t_1),Y_2(t_2)),\ (t_1,t_2)\in\mathbb{R}^2_+\}$ be a bivariate L\'evy process whose marginals $Y_i(t_i)$'s are independent real valued L\'evy processes and $(Y_1(0),Y_2(0))=(0,0)$ almost surely. These type of processes are also known as multiplicative L\'evy processes in the sense of \cite{Khoshnevisan2002a}.
Let us consider the following time-changed process:
\begin{equation}\label{tclp}
	(Z_1(t_1),Z_2(t_2))=(Y_1(L_1(t_1)),Y_2(L_2(t_2))),\ (t_1,t_2)\in\mathbb{R}^2_+,
\end{equation}
where $\{(L_1(t_1),L_2(t_2)),\ (t_1,t_2)\in\mathbb{R}^2_+\}$ is a bivariate inverse subordinator that is independent of $\{(Y_1(t),Y_2(t)),\ t\ge0\}$. For $j=1,2$, let $p_j(x_j,t_j)$ be the density of $Y_j(t_j)$. Then, it satisfies the equation $\partial_{t_j}p_j(x_j,t_j)=G_jp_j(x_j,t_j)$,
where $G_1$ and $G_2$ are the dual of generator of $Y_1(t_1)$ and $Y_2(t_2)$, respectively. Also, the joint density $p(x_1,x_2,t_1,t_2)=\mathrm{Pr}\{Y_1(t_1)\in\mathrm{d}x_1,Y_2(t_2)\in\mathrm{d}x_2\}/\mathrm{d}x_1\mathrm{d}x_2$ solves
\begin{equation*}
	\partial_{t_j}p(x_1,x_2,t_1,t_2)=G_jp(x_1,x_2,t_1,t_2),\ j=1,2.
\end{equation*}
So, the joint density function $q(x_1,x_2,t_1,t_2)=\mathrm{Pr}\{Z_1(t_1)\in\mathrm{d}x_1,Z_2(t_2)\in\mathrm{d}x_2\}/\mathrm{d}x_1\mathrm{d}x_2$ of time-changed process (\ref{tclp}) solves (see \cite{Iafrate2024})
\begin{equation*}
	\mathcal{D}_{t_1,t_2}q(x_1,x_2,t_1,t_2)=(G_1+G_2)q(x_1,x_2,t_1,t_2),\ x_1>0,\ x_2>0,
\end{equation*}
where the operator $\mathcal{D}_{t_1,t_2}$ is as defined in (\ref{opt2}).

\begin{example}(\textbf{Time-changed bivariate Brownian motion}) Let us consider the bivariate Brownian motion $\{(B_1(t_1),B_2(t_2)),\ (t_1,t_2)\in\mathbb{R}^2_+\}$, where its marginals $\{B_i(t),\ t\ge0\}$'s are independent $\mathbb{R}^N$-valued, $N\ge1$ one parameter Brownian motions. Now, consider the time-changed process $B^\alpha(t_1,t_2)=(B_1(L_1^\alpha(t_1)),B_2(L_2^\alpha(t_2)))$, $0<\alpha<1$, where $\{(L_1^\alpha(t_1),L_2^\alpha(t_2)),\ (t_1,t_2)\in\mathbb{R}^2_+\}$ is a bivariate inverse stable subordinator (for definition see \cite[Example 2.2]{Beghin2020} and \cite{Iafrate2024}) that is independent of $\{(B_1(t_1),B_2(t_2)),\ (t_1,t_2)\in\mathbb{R}^2_+\}$. If $t_1=t_2=t$ and $N=1$ then the one parameter time-changed process $\{(B_1(L_1^\alpha(t)),B_2(L_2^\alpha(t))), t\ge0\}$ represents the motion of particles in an anisotropic medium in which the trapping effect can be different for each coordinate direction. Also, $(B_1(L_1^\alpha(t)),B_2(L_2^\alpha(t)))$ is a scaling limit of a bivariate continuous-time random walk (see \cite[Theorem 5.2]{Beghin2020}).
	
	Note that $B^\alpha(t_1,t_2)$ is  $\mathbb{R}^{2N}$-valued process. So, for any bounded measurable function $f:\mathbb{R}^{2N}\rightarrow\mathbb{R}$, we define a two parameter operator 
	\begin{equation*}
		\mathcal{T}_{t_1,t_2}^\alpha f(x)=\mathbb{E}f(x+B^\alpha(t_1,t_2)),\ (t_1,t_2)\in\mathbb{R}^2_+,\ x\in\mathbb{R}^{2N}.
	\end{equation*}
	For $(t_1,t_2)\succ(0,0)$, let us consider a function 
	\begin{equation*}
		q(s_1,s_2,u,v)=(2\pi\sqrt{t_1t_2})^{-N}\exp\bigg(-\frac{||u||^2}{2t_1}-\frac{||v||^2}{2t_2}\bigg),\ u\in\mathbb{R}^N,\ v\in\mathbb{R}^N,
	\end{equation*}
	where $||\cdot||$ is the standard Euclidean norm on $\mathbb{R}^N$. Then, the operator $\mathcal{T}_{t_1,t_2}^\alpha$ is given by
	\begin{equation*}
		\mathcal{T}_{t_1,t_2}^\alpha f(x)=\int_{0}^{\infty}\int_{0}^{\infty}\int_{\mathbb{R}^N}\int_{\mathbb{R}^N}q(s_1,s_2,u,v)f(x+(u,v))\mathrm{Pr}\{L_1^\alpha(t_1)\in\mathrm{d}s_1,L_2^\alpha(t_2)\in\mathrm{d}s_2\}\,\mathrm{d}u\,\mathrm{d}v\,\mathrm{d}s_1\,\mathrm{d}s_2,
	\end{equation*}
	where $(u,v)\in\mathbb{R}^{2N}$ such that $u$ and $v$ are in $\mathbb{R}^N$. 
	It can be established using the definition of $B^\alpha(t_1,t_2)$ and Example 2 of Chapter 11 in \cite{Khoshnevisan2002a}.

\end{example} 

In \cite{Leonenko2014}, general expressions of the auto correlation function for time-changed L\'evy process is derived. Next, we derive the explicit expressions for the mean vector and the dispersion matrix of time-changed process (\ref{tclp}).

Let $\psi_j(\xi_j)=-\log\mathbb{E}e^{i\xi_jY_j(1)}$, $\xi_j\in\mathbb{R}$ be the characteristic exponent of L\'evy process $Y_j(1)$ for each $j=1,2$. Then, the joint characteristic function of (\ref{tclp}) is given by
\begin{align}
	\mathbb{E}e^{i\xi_1Z_1(t_1)+i\xi_2Z_2(t_2)}&=\int_{0}^{\infty}\int_{0}^{\infty}\mathbb{E}e^{i\xi_1Y_1(s_1)+i\xi_2Y_2(s_2)}\mathrm{Pr}\{L_1(t_1)\in\mathrm{d}s_1,L_2(t_2)\in\mathrm{d}s_2\}\nonumber\\
	&=\int_{0}^{\infty}\int_{0}^{\infty}e^{-s_1\psi_1(\xi_1)-s_2\psi_2(\xi_2)}\mathrm{Pr}\{L_1(t_1)\in\mathrm{d}s_1,L_2(t_2)\in\mathrm{d}s_2\}\nonumber\\
	&=\mathbb{E}e^{-L_1(t_1)\psi_1(\xi_1)-L_2(t_2)\psi_2(\xi_2)},\ (t_1,t_2)\in\mathbb{R}^2_+.\label{jcf}
\end{align}

\begin{proposition}\label{proptclp}
	Let $\{(Z_1(t_1),Z_2(t_2)),\ (t_1,t_2)\in\mathbb{R}^2_+\}$ be a process as defined in (\ref{tclp}). For $(t_1,t_2)\in\mathbb{R}^2_+$, if the first moments of $(X_1(t_1),X_2(t_2))$ and $(L_1(t_1),L_2(t_2))$ exist then the first moment of (\ref{tclp}) exists and 
	\begin{equation}\label{tclpmean}
		\mathbb{E}Z_j(t_j)=\mathbb{E}L_j(t_j)\mathbb{E}Y_j(1),\ j=1,2.
	\end{equation}
	Also, its variance-covariance matrix is given by
	\begin{equation}\label{cvm}
		\Sigma(t_1,t_2)=\begin{pmatrix}
			\mathbb{E}L_1(t_1)\mathbb{V}\mathrm{ar}Y_1(1)+\mathbb{V}\mathrm{ar}L_1(t_1)(\mathbb{E}Y_1(1))^2& \mathbb{E}Y_1(1)Y_2(1)\mathbb{C}\mathrm{ov}(L_1(t_1),L_2(t_2))\vspace{0.2cm}\\
			\mathbb{E}Y_1(1)Y_2(1)\mathbb{C}\mathrm{ov}(L_1(t_1),L_2(t_2))&\mathbb{E}L_2(t_2)\mathbb{V}\mathrm{ar}Y_2(1)+\mathbb{V}\mathrm{ar}L_2(t_2)(\mathbb{E}Y_2(1))^2
		\end{pmatrix}.
	\end{equation}
\end{proposition}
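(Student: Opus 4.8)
The plan is to obtain every entry of the mean vector and of $\Sigma(t_1,t_2)$ by conditioning on the time-change $(L_1(t_1),L_2(t_2))$ and then invoking the two elementary moment identities for a one parameter L\'evy process, namely $\mathbb{E}Y_j(s)=s\,\mathbb{E}Y_j(1)$ and $\mathbb{V}\mathrm{ar}\,Y_j(s)=s\,\mathbb{V}\mathrm{ar}\,Y_j(1)$, both immediate from stationary and independent increments. Two independence facts will be used throughout: $(Y_1,Y_2)$ is independent of $(L_1,L_2)$, and the marginals $Y_1$ and $Y_2$ are mutually independent. For the mean I would condition on $L_j(t_j)$ and combine the independence of $Y_j$ and $L_j$ with linearity of the L\'evy mean, so that $\mathbb{E}Z_j(t_j)=\mathbb{E}[L_j(t_j)\,\mathbb{E}Y_j(1)]=\mathbb{E}L_j(t_j)\,\mathbb{E}Y_j(1)$, which is \eqref{tclpmean}.

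For the diagonal entries of $\Sigma(t_1,t_2)$ I would apply the law of total variance, again conditioning on $L_j(t_j)$: the conditional variance equals $L_j(t_j)\,\mathbb{V}\mathrm{ar}\,Y_j(1)$ and the conditional mean equals $L_j(t_j)\,\mathbb{E}Y_j(1)$, which yields
\begin{equation*}
\mathbb{V}\mathrm{ar}\,Z_j(t_j)=\mathbb{E}L_j(t_j)\,\mathbb{V}\mathrm{ar}\,Y_j(1)+(\mathbb{E}Y_j(1))^2\,\mathbb{V}\mathrm{ar}\,L_j(t_j).
\end{equation*}
For the off-diagonal entry I would condition on the pair $(L_1(t_1),L_2(t_2))$; by independence of $Y_1$ and $Y_2$ the conditional expectation of the product factorizes, equalling $s_1s_2\,\mathbb{E}Y_1(1)\mathbb{E}Y_2(1)$ when $(L_1(t_1),L_2(t_2))=(s_1,s_2)$, and subtracting $\mathbb{E}Z_1(t_1)\,\mathbb{E}Z_2(t_2)$ computed from \eqref{tclpmean} leaves
\begin{equation*}
\mathbb{C}\mathrm{ov}(Z_1(t_1),Z_2(t_2))=\mathbb{E}Y_1(1)\mathbb{E}Y_2(1)\,\mathbb{C}\mathrm{ov}(L_1(t_1),L_2(t_2)).
\end{equation*}
Since independence of $Y_1,Y_2$ gives $\mathbb{E}Y_1(1)\mathbb{E}Y_2(1)=\mathbb{E}[Y_1(1)Y_2(1)]$, this is exactly the off-diagonal entry of \eqref{cvm}.

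The computations are routine, so there is no serious obstacle; the only point needing care is justifying the interchange of expectation with the conditioning, which is legitimate precisely because the hypothesis forces the relevant first (and, for $\Sigma$, second) moments to be finite so that all the integrals converge. An equivalent route would be to differentiate the joint characteristic function \eqref{jcf} once and twice at the origin, reading the first and second moments of $(Z_1(t_1),Z_2(t_2))$ directly off the mixed moments of $(L_1(t_1),L_2(t_2))$; this bypasses the conditioning language but produces the same identities.
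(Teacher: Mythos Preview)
Your proposal is correct. The primary route you take---conditioning on $(L_1(t_1),L_2(t_2))$ and invoking the law of total variance together with the linear-in-time moment identities $\mathbb{E}Y_j(s)=s\,\mathbb{E}Y_j(1)$ and $\mathbb{V}\mathrm{ar}\,Y_j(s)=s\,\mathbb{V}\mathrm{ar}\,Y_j(1)$---differs from the paper's argument, which is precisely the ``equivalent route'' you mention at the end: the paper differentiates the joint characteristic function \eqref{jcf} at the origin, reading off $\mathbb{E}Z_j(t_j)$, $\mathbb{E}Z_1(t_1)Z_2(t_2)$ and $\mathbb{E}Z_j(t_j)^2$ from $\psi_j'(0)$ and $\psi_j''(0)$, and then assembles the covariance matrix algebraically. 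Your conditioning argument is arguably more transparent probabilistically (the law of total variance packages the diagonal entries in one line, and the independence of $Y_1,Y_2$ makes the off-diagonal immediate), whereas the characteristic-function route is more mechanical and extends uniformly to higher mixed moments without rethinking the conditioning structure. Both are short and standard; neither offers a real advantage here.
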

\begin{proof}
	Note that $\mathbb{E}e^{i\xi_jY_j(t_j)}=e^{-t_j\psi_j(\xi_j)}$. So, $\mathbb{E}Y_j(t_j)=-it_j\psi_j'(0)$, $j=1,2$. From (\ref{jcf}), we have
	\begin{equation*}
		\mathbb{E}Z_1(t_1)=-i\partial_{\xi_1}\mathbb{E}e^{i\xi_1Z_1(t_1)+i\xi_2Z_2(t_2)}\big|_{\xi_1=0,\,\xi_2=0}=i\mathbb{E}L_1(t_1)\psi_1'(0)=\mathbb{E}L_1(t_1)\mathbb{E}Y_1(1).
	\end{equation*}
	Similarly, we have $\mathbb{E}Z_2(t_2)=\mathbb{E}L_2(t_2)\mathbb{E}Y_2(1)$.
	
	For their joint moment, from (\ref{jcf}), we have
	\begin{equation*}
		\mathbb{E}Z_1(t_1)Z_2(t_2)=-\partial_{\xi_2}\partial_{\xi_1}\mathbb{E}e^{i\xi_1Z_1(t_1)+i\xi_2Z_2(t_2)}|_{\xi_1=0,\,\xi_2=0}=\mathbb{E}L_1(t_1)L_2(t_2)\psi_1'(0)\psi_2'(0).
	\end{equation*}
	Thus,
	\begin{align*}
		\mathbb{C}\mathrm{ov}(Z_1(t_1),Z_2(t_2))&=\mathbb{E}Z_1(t_1)Z_2(t_2)-\mathbb{E}Z_1(t_1)\mathbb{E}Z_2(t_2)\\
		&=\mathbb{E}Y_1(1)Y_2(1)(\mathbb{E}L_1(t_1)L_2(t_2)-\mathbb{E}L_1(t_1)\mathbb{E}L_2(t_2))\\
		&=\mathbb{E}Y_1(1)Y_2(1)\mathbb{C}\mathrm{ov}(L_1(t_1),L_2(t_2)).
	\end{align*}
	
	From (\ref{jcf}), the second moment of $Z_1(t_1)$ is given by
	\begin{equation*}
		\mathbb{E}Z_1(t_1)^2=-\partial^2_{\xi_1}\mathbb{E}e^{i\xi_1Z_1(t_1)+i\xi_2Z_2(t_2)}|_{\xi_1=0,\,\xi_2=0}=\mathbb{E}L_1(t_1)\psi_1{''}(0)-\mathbb{E}L_1(t_1)^2\psi_1'(0)^2.
	\end{equation*}
	On using $\psi_1{''}(0)=\mathbb{V}\mathrm{ar}Y_1(1)$, we get
	\begin{align*}
		\mathbb{V}\mathrm{ar}Z_1(t_1)&=\mathbb{E}L_1(t_1)\mathbb{V}\mathrm{ar}Y_1(1)+\mathbb{E}L_1(t_1)^2(\mathbb{E}Y_1(1))^2-(\mathbb{E}L_1(t_1)\mathbb{E}Y_1(1))^2\\
		&=\mathbb{E}L_1(t_1)\mathbb{V}\mathrm{ar}Y_1(1)+\mathbb{V}\mathrm{ar}L_1(t_1)(\mathbb{E}Y_1(1))^2+(\mathbb{E}L_1(t_1)\mathbb{E}Y_1(1))^2-(\mathbb{E}L_1(t_1)\mathbb{E}Y_1(1))^2.
	\end{align*}
	Similarly, we have $\mathbb{V}\mathrm{ar}Z_2(t_2)=\mathbb{E}L_2(t_2)\mathbb{V}\mathrm{ar}Y_2(1)+\mathbb{V}\mathrm{ar}L_2(t_2)(\mathbb{E}Y_2(1))^2$, $t_2\ge0$. This completes the proof.
\end{proof}
\begin{remark}
	From Proposition 3.4 of \cite{Beghin2020}, it follows that all the mixed moments of bivariate inverse subordinator exist. So, the variance-covariance matrix (\ref{cvm}) is well defined. Also, on using Eq. (3.13) of \cite{Beghin2020}, the Laplace transform of covariance of $(Z_1(t_1),Z_2(t_2))$ is given by
	{\small\begin{equation*}
			\int_{0}^{\infty}\int_{0}^{\infty}e^{-z_1t_1-z_2t_2}\mathbb{C}\mathrm{ov}(Z_1(t_1),Z_2(t_2))\,\mathrm{d}t_1\,\mathrm{d}t_2=\frac{\mathbb{E}Y_1(1)Y_2(1)(B_1(z_1)+B_2(z_2)-B(z_1,z_2))}{z_1z_2B_1(z_1)B_2(z_2)B(z_1,z_2)},\ z_1>0,\,z_2>0,
	\end{equation*}}
	where $B_1$, $B_2$ and $B$  are defined in (\ref{mbif}) and (\ref{bibernsfncn}).
\end{remark}

Next, we obtain the auto correlation matrix of the process (\ref{tclp}).
\begin{theorem}\label{acvthm}
	For $(s_1,s_2)\in\mathbb{R}^2_+$ and $(t_1,t_2)\in\mathbb{R}^2_+$, the auto covariance matrix of (\ref{tclp}) is given by $(\mathbb{C}\mathrm{ov}(Z_i(s_i),Z_j(t_j)))_{1\leq i,j\leq2}$,
	where 
	\begin{equation}\label{acvf}
		\mathbb{C}\mathrm{ov}(Z_i(s_i),Z_j(t_j))=\begin{cases}
			\mathbb{C}\mathrm{ov}(L_j(s_j),L_j(t_j))(\mathbb{E}Y_j(1))^2+\mathbb{E}L_j(\min\{s_j, t_j\})\mathbb{V}\mathrm{ar}Y_j(1),\ i=j,\vspace{0.2cm}\\
			\mathbb{E}(Y_1(1)Y_2(1))\mathbb{C}\mathrm{ov}(L_i(s_i),L_j(t_j)),\ i\neq j.
		\end{cases}
	\end{equation}
\end{theorem}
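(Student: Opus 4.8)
The plan is to compute each entry $\mathbb{C}\mathrm{ov}(Z_i(s_i),Z_j(t_j))$ by conditioning on the time-change values and exploiting that the bivariate L\'evy process $(Y_1,Y_2)$ is independent of the bivariate inverse subordinator $(L_1,L_2)$. The means are already supplied by (\ref{tclpmean}) in Proposition \ref{proptclp}, so it suffices to compute the mixed second moments $\mathbb{E}[Z_i(s_i)Z_j(t_j)]$ and subtract. I would treat the diagonal ($i=j$) and off-diagonal ($i\neq j$) entries separately.

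The ingredient I would establish first is the second-moment identity for a single one-parameter L\'evy process: for deterministic $u\leq v$, splitting $Y_j(v)=Y_j(u)+(Y_j(v)-Y_j(u))$ and using stationary independent increments gives $\mathbb{E}[Y_j(u)Y_j(v)]=uv(\mathbb{E}Y_j(1))^2+\min\{u,v\}\mathbb{V}\mathrm{ar}Y_j(1)$, which then holds for all $u,v\geq0$ by symmetry. For the diagonal entries I would condition on $(L_j(s_j),L_j(t_j))=(u,v)$, invoke independence of $Y_j$ from $L_j$, and integrate. The decisive observation is that $L_j$ is almost surely non-decreasing, so that $\min\{L_j(s_j),L_j(t_j)\}=L_j(\min\{s_j,t_j\})$; this is exactly what turns the $\min$ term into $\mathbb{E}L_j(\min\{s_j,t_j\})\mathbb{V}\mathrm{ar}Y_j(1)$. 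Replacing $\mathbb{E}[L_j(s_j)L_j(t_j)]$ by $\mathbb{C}\mathrm{ov}(L_j(s_j),L_j(t_j))$ after subtracting the product of means then yields the first line of (\ref{acvf}).

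For the off-diagonal entries, say $i=1$, $j=2$, I would condition on $(L_1(s_1),L_2(t_2))=(u,v)$ and use instead that the marginals $Y_1$ and $Y_2$ are independent, so that $\mathbb{E}[Y_1(u)Y_2(v)]=u\,\mathbb{E}Y_1(1)\cdot v\,\mathbb{E}Y_2(1)$. Taking expectation over $(L_1(s_1),L_2(t_2))$ and subtracting $\mathbb{E}Z_1(s_1)\mathbb{E}Z_2(t_2)$ from (\ref{tclpmean}) leaves $\mathbb{E}Y_1(1)\mathbb{E}Y_2(1)\,\mathbb{C}\mathrm{ov}(L_1(s_1),L_2(t_2))$, and writing $\mathbb{E}Y_1(1)\mathbb{E}Y_2(1)=\mathbb{E}(Y_1(1)Y_2(1))$ by independence gives the second line of (\ref{acvf}).

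The L\'evy second-moment identity and the factorisations are routine; the step that needs the most care is the diagonal case, where the two time-change values $L_j(s_j)$ and $L_j(t_j)$ are correlated and must be conditioned on jointly, and where monotonicity of $L_j$ is essential to reduce $\min\{L_j(s_j),L_j(t_j)\}$ to $L_j(\min\{s_j,t_j\})$. Finiteness of every term, so that the covariance matrix is well defined, is guaranteed by the existence of all mixed moments of the bivariate inverse subordinator recorded in the remark following Proposition \ref{proptclp}.
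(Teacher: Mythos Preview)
Your proposal is correct and follows essentially the same route as the paper: both first derive the second-moment identity $\mathbb{E}[Y_j(u)Y_j(v)]=uv(\mathbb{E}Y_j(1))^2+\min\{u,v\}\mathbb{V}\mathrm{ar}Y_j(1)$ from stationary independent increments, then condition on the time-change values and use monotonicity of $L_j$ to pass from $\min\{L_j(s_j),L_j(t_j)\}$ to $L_j(\min\{s_j,t_j\})$, with the off-diagonal case handled via independence of $Y_1$ and $Y_2$. The paper's proof simply fixes the ordering $s_j\leq t_j$ rather than carrying the $\min$ symbolically, but the argument is the same.
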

\begin{proof}
	On using the independence of $Y_1(t_1)$ and $Y_2(t_2)$, we have
	\begin{equation*}
		\mathbb{E}(Y_1(s_1)Y_2(t_2))=s_1t_2\mathbb{E}(Y_1(1)Y_2(1))\ \text{and}\ 
		\mathbb{E}(Y_2(s_2)Y_1(t_1))=s_2t_1\mathbb{E}(Y_1(1)Y_2(1)).
	\end{equation*}
	If $s_1\leq t_1$ then on using the independent increments property of $Y_1(t_1)$, we have
	\begin{align*}
		\mathbb{E}(Y_1(s_1)Y_1(t_1))&=\mathbb{E}(Y_1(s_1)(Y_1(t_1)-Y_1(s_1)))+\mathbb{E}Y_1(s_1)^2\\
		&=\mathbb{E}Y_1(t_1-s_1)\mathbb{E}Y_1(s_1)+\mathbb{V}\mathrm{ar}(Y_1(s_1))+(\mathbb{E}Y_1(s_1))^2\\
		&=s_1(t_1-s_1)(\mathbb{E}Y_1(1))^2+\mathbb{V}\mathrm{ar}(Y_1(s_1))+s_1^2(\mathbb{E}Y_1(1))^2\\
		&=s_1t_1(\mathbb{E}Y_1(1))^2+s_1\mathbb{V}\mathrm{ar}Y_1(1),
	\end{align*}
	and similarly for $s_2\leq t_2$, we have
	\begin{equation*}
		\mathbb{E}(Y_2(s_2)Y_2(t_2))=s_2t_2(\mathbb{E}Y_2(1))^2+s_2\mathbb{V}\mathrm{ar}(Y_2(1)).
	\end{equation*}
	So,
	\begin{align*}
		\mathbb{E}(Z_1(s_1)Z_2(t_2))&=\int_{0}^{\infty}\int_{0}^{\infty}\mathbb{E}(Y_1(x_1)Y_2(x_2))\mathrm{Pr}(L_1(s_1)\in\mathrm{d}x_1,L_2(t_2)\in\mathrm{d}x_2)\\
		&=\mathbb{E}(Y_1(1)Y_2(1))\int_{0}^{\infty}\int_{0}^{\infty}x_1x_2\mathrm{Pr}(L_1(s_1)\in\mathrm{d}x_1,L_2(t_2)\in\mathrm{d}x_2)\\
		&=\mathbb{E}(Y_1(1)Y_2(1))\mathbb{E}(L_1(s_1)L_2(t_2)),
	\end{align*}
	and similarly, we have
	\begin{equation*}
		\mathbb{E}(Z_2(s_1)Z_1(t_1))=\mathbb{E}(Y_1(1)Y_2(1))\mathbb{E}(L_2(s_2)L_1(t_1)).
	\end{equation*}
	Moreover, from (\ref{tclp}), we get
	\begin{equation*}
		\mathbb{E}(Z_1(s_1)Z_1(t_1))=\mathbb{E}L_1(s_1)L_1(t_1)(\mathbb{E}Y_1(1))^2+\mathbb{E}L_1(s_1)\mathbb{V}\mathrm{ar}Y_1(1)
	\end{equation*}
	and
	\begin{equation*}
		\mathbb{E}(Z_2(s_2)Z_2(t_2))=\mathbb{E}L_2(s_2)L_2(t_2)(\mathbb{E}Y_2(1))^2+\mathbb{E}L_2(s_2)\mathbb{V}\mathrm{ar}Y_2(1).
	\end{equation*}
	Thus, by using (\ref{tclpmean}), we get 
	\begin{equation*}
		\mathbb{C}\mathrm{ov}(Z_i(s_i),Z_j(t_j))=\mathbb{E}(Z_i(s_i)Z_j(t_j))-\mathbb{E}L_i(s_i)\mathbb{E}L_j(t_j)\mathbb{E}Y_i(1)\mathbb{E}Y_j(1),\ 1\leq i,j\leq2.
	\end{equation*}
	This completes the proof.
\end{proof}

\begin{remark}
	If $\mathbb{E}Y_1(1)=\mathbb{E}Y_2(1)=0$ then from (\ref{acvf}), we have
	\begin{equation*}
		\mathbb{C}\mathrm{ov}(Z_i(s_i),Z_j(t_j))=\begin{cases}
			\mathbb{E}L_j(\min\{s_j, t_j\})\mathbb{V}\mathrm{ar}Y_j(1),\ i=j,\vspace{0.2cm}\\
			0,\ i\neq j,
		\end{cases}
	\end{equation*}
	that is, $\{Z_1(t_1), t_1\ge0\}$ and $\{Z_2(t_2), t_2\ge0\}$ are uncorrelated whenever $t_1\ne t_2$.
	
	Moreover, for $s_j=t_j$, $j=1,2$, (\ref{acvf}) reduces to
	\begin{equation*}
		\mathbb{C}\mathrm{ov}(Z_i(t_i),Z_j(t_j))=\begin{cases}
			\mathbb{V}\mathrm{ar}(L_j(t_j))(\mathbb{E}Y_j(1))^2+\mathbb{E}L_j(t_j)\mathbb{V}\mathrm{ar}Y_j(1),\ i=j,\vspace{0.2cm}\\
			\mathbb{E}(Y_1(1)Y_2(1))\mathbb{C}\mathrm{ov}(L_i(t_i),L_j(t_j)),\ i\neq j,
		\end{cases}
	\end{equation*}
	which agrees with (\ref{cvm}). 
\end{remark}
\begin{remark}\label{rein}
	Note that Proposition \ref{proptclp} and Theorem  \ref{acvthm} hold true if we replace the bivariate inverse subordinator $\{(L_1(t_1),L_2(t_2)),\ (t_1,t_2)\in\mathbb{R}^2_+\}$ in (\ref{tclp}) with any bivariate random process whose marginals, not necessarily independent, are non-negative and non-decreasing. For example, we can use the time-changed process (\ref{tpcomp}) in place of bivariate inverse subordinator.
\end{remark}
\begin{example}(\textbf{Bivariate fractional Poisson process})
	Let us consider a case where the processes $\{Y_1(t_1),\ t_1\ge0\}$ and $\{Y_2(t_2),\ t_2\ge0\}$ are Poisson processes with intensities $\lambda_1>0$ and $\lambda_2>0$, respectively. Also, the time changing process is a bivariate inverse $\alpha$-stable subordinator $\{(L_1^\alpha(t_1),L_2^\alpha(t_2)),\ (t_1,t_2)\in\mathbb{R}^2_+\}$, $0<\alpha<1$ associated with the bivariate stable subordinator of index $\alpha$ as given in Example 2.2 of \cite{Beghin2020}. For more details on multivariate stable processes, we refer the reader to \cite{D’Ovidio2014} and \cite{Meerschaert1999}, and references therein.
	
	The bivariate time-changed process $\{(Y_1(L_1^\alpha(t_1)),Y_2(L_2^\alpha(t_2))),\ (t_1,t_2)\in\mathbb{R}^2_+\}$ is an extension of the one parameter fractional Poisson process. Its joint probability mass function $\tilde{p}(n_1,n_2,t_1,t_2)=\mathrm{Pr}\{Y_1(L_1^\alpha(t_1))=n_1,Y_2(L_2^\alpha(t_2))=n_2\}$, $n_1\ge0$, $n_2\ge0$ solves the following system of differential equations (see \cite{Beghin2020}):
	\begin{equation*}
		\mathcal{D}_{t_1,t_2}^\alpha\tilde{p}(n_1,n_2,t_1,t_2)=-(\lambda_1+\lambda_2)\tilde{p}(n_1,n_2,t_1,t_2)+\lambda_1\tilde{p}(n_1-1,n_2,t_1,t_2) +\lambda_2\tilde{p}(n_1,n_2-1,t_1,t_2),
	\end{equation*}
	with $\tilde{p}(n_1,n_2,0,0)=\delta_0(n_1)\delta_0(n_2)$. Here, the operator $\mathcal{D}_{t_1,t_2}^\alpha$ is  defined as
	\begin{equation*}
		\mathcal{D}_{t_1,t_2}^\alpha f(t_1,t_2)=C\int_{0}^{\pi/2}(\cos\theta\partial_{t_1}+\sin\theta\partial_{t_2})^\alpha f(t_1,t_2)M(\mathrm{d}\theta)
	\end{equation*}
	 where $M(\mathrm{d}\theta)$ is a measure defined on the arc of unit circle and $C>0$ is a suitable constant.  
	
	From (\ref{acvf}), we have
	\begin{equation*}
		\mathbb{C}\mathrm{ov}(Y_i(L_i^\alpha(s_i)),Y_j(L_j^\alpha(t_j)))=\begin{cases}
			\lambda_j^2\mathbb{C}\mathrm{ov}(L_j^\alpha(s_j),L_j^\alpha(t_j))+\lambda_j\mathbb{E}L_j^\alpha(\min\{s_j, t_j\}),\ i=j,\vspace{0.2cm}\\
			\lambda_1\lambda_2\mathbb{C}\mathrm{ov}(L_i^\alpha(s_i),L_j^\alpha(t_j)),\ i\neq j.
		\end{cases}
	\end{equation*} 
\end{example}                                                  \subsection{Time-changed two parameter L\'evy process}       Here, we study the time-changed two parameter L\'evy process with rectangular increments. 

A collection of random variables $\{Y(t_1,t_2),\ (t_1,t_2)\in\mathbb{R}^2_+\}$ is called two parameter random process.
For $(s_1,s_2)\prec(t_1,t_2)$, the rectangular increments of two parameter random process on rectangle $R=(s_1,t_1]\times(s_2,t_2]$ is defined as follows:
\begin{equation*}
Y((s_1,t_1]\times(s_2,t_2])=Y(t_1,t_2)-Y(s_1,t_2)-Y(t_1,s_2)+Y(s_1,s_2).
\end{equation*}

We consider the following definition of stationary and independent increments of two parameter processes:
\begin{definition}\label{instain}
Let $\{Y(t_1,t_2),\ (t_1,t_2)\in\mathbb{R}^2_+\}$ be a two parameter random process. Then,\\
\noindent (i) it has stationary increments if for $(s_1,s_2)\prec(t_1,t_2)$, the distribution of $Y((s_1,t_1]\times(s_2,t_2])$ depends only on the area of rectangle $R=(s_1,t_1]\times(s_2,t_2]$, that is, $(s_2-s_1)(t_2-t_1)$,\\
\noindent (ii) it has independent increments if $R_1,R_2,\dots,R_m$ are disjoint rectangles of type $R$ then the random variables $Y(R_1),Y(R_2),\dots,Y(R_m)$ are independent of each other.
\end{definition}
Now, we recall the definition of two parameter L\'evy process. The existence and characterizations of such processes is studied in  \cite{Starf1972}. Such extension of L\'evy processes with multidimensional parameter is studied by various authors. For some L\'evy processes with rectangular increments, we refer the reader to \cite{Adler1983}, \cite{Dalang1991} and \cite{Khoshnevisan1999}.
\begin{definition}(\textbf{Two-parameter L\'evy process})
A two parameter $\mathbb{R}^d$-valued random process $\{Y(t_1,t_2)$, $ (t_1,t_2)\in\mathbb{R}^2_+\}$ is called two parameter L\'evy process if \\ 
\noindent (i) $Y(0,t_2)=Y(t_1,0)=0$ almost surely,\\
\noindent (ii) it has stationary and independent rectangular increments,\\
\noindent (iii) it is continuous in probability.
\end{definition}  

The characteristic function of two parameter L\'evy process is given by $\mathbb{E}e^{i\langle\xi, Y(t_1,t_2)\rangle}=e^{-t_1t_2\Psi(\xi)}$, $\xi\in\mathbb{R}^d_+$ where $\langle\cdot,\cdot\rangle$ denotes the standard inner product on Euclidean space, and $\Psi$ is the L\'evy exponent of $Y(t_1,t_2)$ defined as follows:
\begin{equation*}
	\Psi(\xi)=i\langle b,\xi\rangle+\frac{\langle\Sigma\xi,\xi\rangle}{2}-\int_{\mathbb{R}^d-\{0\}}\Big(e^{i\langle\xi,x\rangle}-1-i\langle\xi,x\rangle\textbf{1}_{||x||\leq 1}\Big)\nu(\mathrm{d}x)
\end{equation*}                                            
for some $b\in\mathbb{R}^d$ and $\Sigma$ is a $d\times d$ symmetric  positive semidefinite matrix, and $\nu$ is  the L\'evy measure on $\mathbb{R}^d$.  
Also, in \cite{Starf1972}, it is shown that the characteristic function of $Y(t_1,t_2)$ can be written as $\mathbb{E}e^{i\xi Y(t_1,t_2)}=\phi^{t_1t_2}(\xi)$ for all $(t_1,t_2)\in\mathbb{R}^2_+$, where $\phi(\cdot)$ is the characteristic function of an infinitely divisible distribution on $\mathbb{R}^d$.    

Next, we study a time-changed two parameter L\'evy process. Let $\{Y(t_1,t_2),\ (t_1,t_2)\in\mathbb{R}^2_+\}$ be a real two parameter L\'evy process and $\{(L_1(t_1),L_2(t_2)),\ (t_1,t_2)\in\mathbb{R}^2_+\}$ be the bivariate inverse subordinator which is independent of it. We consider the following time-changed two parameter process:
\begin{equation}\label{tptclp}
	\tilde{Y}(t_1,t_2)\coloneqq Y(L_1(t_1),L_2(t_2)),\ (t_1,t_2)\in\mathbb{R}^2_+.
\end{equation}
Its characteristic function is given by
\begin{align}
	\mathbb{E}e^{i\xi\tilde{Y}(t_1,t_2)}&=\int_{0}^{\infty}\int_{0}^{\infty}\mathbb{E}e^{i\xi Y(x_1,x_2)}\mathrm{Pr}\{L_1(t_1)\in\mathrm{d}x_1,L_2(t_2)\in\mathrm{d}x_2\}\nonumber\\
	&=\int_{0}^{\infty}\int_{0}^{\infty}e^{-x_1x_2\Psi(\xi)}\mathrm{Pr}\{L_1(t_1)\in\mathrm{d}x_1,L_2(t_2)\in\mathrm{d}x_2\}\nonumber\\
	&=\mathbb{E}\exp(-L_1(t_1)L_2(t_2)\Psi(\xi)),\ \xi\in\mathbb{R}.\label{tptclap}
\end{align}

Next, we derive the explicit expressions for its mean, variance and auto covariance. As mentioned in Remark \ref{rein} for the case of bivariate L\'evy processes, all the results obtained here hold true if we replace the bivariate inverse subordinator in (\ref{tptclp}) with any bivariate process with non-negative and non-decreasing marginals. 
\begin{theorem}\label{thmtptc}
	Let $\{\tilde{Y}(t_1,t_2),\ (t_1,t_2)\in\mathbb{R}^2_+\}$ be the time-changed process defined in (\ref{tptclp}). For all $(t_1,t_2)\in\mathbb{R}^2_+$, \\
	\noindent(i) if first moments of $Y(1,1)$ and $L_1(t_1)L_2(t_2)$ exist then the first moment of (\ref{tptclp}) exists and it is given by
	\begin{equation*}
		\mathbb{E}\tilde{Y}(t_1,t_2)=\mathbb{E}L_1(t_1)L_2(t_2)\mathbb{E}Y(1,1),
	\end{equation*}
	\noindent (ii) if the second moments of $Y(1,1)$ and $L_1(t_1)L_2(t_2)$ exist then
	\begin{equation*}
		\mathbb{V}\mathrm{ar}\tilde{Y}(t_1,t_2)=\mathbb{E}L_1(t_1)L_2(t_2)\mathbb{V}\mathrm{ar}Y(1,1)+\mathbb{V}\mathrm{ar}L_1(t_1)L_2(t_2)(\mathbb{E}Y(1,1))^2,
	\end{equation*}
	\noindent (iii) for $(s_1,s_2)\prec(t_1,t_2)$, we have
	\begin{equation*}
		\mathbb{C}\mathrm{ov}(\tilde{Y}(s_1,s_2),\tilde{Y}(t_1,t_2))=\mathbb{E}L_1(s_1)L_2(s_2)\mathbb{V}\mathrm{ar}Y(1,1)+\mathbb{C}\mathrm{ov}(L_1(s_1)L_2(s_2),L_1(t_1)L_2(t_2))(\mathbb{E}Y(1,1))^2.
	\end{equation*}
\end{theorem}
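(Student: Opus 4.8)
The plan is to obtain parts (i) and (ii) by differentiating the characteristic function (\ref{tptclap}) at the origin, and to reduce part (iii) to a moment computation for the underlying two parameter L\'evy process after conditioning on the time change. Throughout I will use that $Y$ is independent of $\{(L_1(t_1),L_2(t_2))\}$, and I abbreviate $W=L_1(t_1)L_2(t_2)$.

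First I would record the moment data of $Y(1,1)$ in terms of the L\'evy exponent. Since $\mathbb{E}e^{i\xi Y(1,1)}=e^{-\Psi(\xi)}$ and $\Psi(0)=0$, differentiating at $\xi=0$ gives $\Psi'(0)=-i\,\mathbb{E}Y(1,1)$ and $\Psi''(0)=\mathbb{V}\mathrm{ar}Y(1,1)$. Equation (\ref{tptclap}) reads $\mathbb{E}e^{i\xi\tilde{Y}(t_1,t_2)}=\mathbb{E}e^{-W\Psi(\xi)}$, so differentiating once and twice under the expectation (justified by the assumed existence of the relevant moments of $W$ and $Y(1,1)$) and setting $\xi=0$ yields $\mathbb{E}\tilde{Y}(t_1,t_2)=i\Psi'(0)\,\mathbb{E}W=\mathbb{E}Y(1,1)\,\mathbb{E}W$ and $\mathbb{E}\tilde{Y}(t_1,t_2)^2=\Psi''(0)\mathbb{E}W-\Psi'(0)^2\mathbb{E}W^2=\mathbb{V}\mathrm{ar}Y(1,1)\,\mathbb{E}W+(\mathbb{E}Y(1,1))^2\mathbb{E}W^2$. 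Subtracting $(\mathbb{E}\tilde{Y}(t_1,t_2))^2=(\mathbb{E}Y(1,1))^2(\mathbb{E}W)^2$ turns the second-moment term into $(\mathbb{E}Y(1,1))^2\,\mathbb{V}\mathrm{ar}W$, which is exactly part (ii); part (i) is immediate.

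For part (iii) the main step is to compute, for a deterministic ordered pair of corners $a_i\le b_i$, the quantity $\mathbb{E}[Y(a_1,a_2)Y(b_1,b_2)]$, and then to average it by conditioning on $(L_1(s_1),L_2(s_2),L_1(t_1),L_2(t_2))$. The monotonicity of each $L_i$ together with $(s_1,s_2)\prec(t_1,t_2)$ guarantees $a_i:=L_i(s_i)\le b_i:=L_i(t_i)$ almost surely, so the corners are indeed ordered. I would decompose $[0,b_1]\times[0,b_2]$ into the four disjoint rectangles $R_1=[0,a_1]\times[0,a_2]$, $R_2=(a_1,b_1]\times[0,a_2]$, $R_3=[0,a_1]\times(a_2,b_2]$ and $R_4=(a_1,b_1]\times(a_2,b_2]$, so that $Y(b_1,b_2)=\sum_{k=1}^{4}Y(R_k)$ while $Y(a_1,a_2)=Y(R_1)$. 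By the independent-increments property the $Y(R_k)$ are mutually independent, and by stationarity each $Y(R_k)$ has the law of a rectangular increment with characteristic function $e^{-|R_k|\Psi(\xi)}$, whence $\mathbb{E}Y(R_k)=|R_k|\,\mathbb{E}Y(1,1)$ and $\mathbb{V}\mathrm{ar}Y(R_k)=|R_k|\,\mathbb{V}\mathrm{ar}Y(1,1)$. Expanding $\mathbb{E}[Y(R_1)\sum_k Y(R_k)]$ and using $|R_1|=a_1a_2$ with $\sum_{k\ge2}|R_k|=b_1b_2-a_1a_2$ cancels the spurious $(a_1a_2)^2$ term and leaves
\begin{equation*}
	\mathbb{E}[Y(a_1,a_2)Y(b_1,b_2)]=a_1a_2\,\mathbb{V}\mathrm{ar}Y(1,1)+a_1a_2\,b_1b_2\,(\mathbb{E}Y(1,1))^2.
\end{equation*}

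Finally I would substitute $a_1a_2=L_1(s_1)L_2(s_2)$ and $b_1b_2=L_1(t_1)L_2(t_2)$, take expectations, and subtract $\mathbb{E}\tilde{Y}(s_1,s_2)\,\mathbb{E}\tilde{Y}(t_1,t_2)$ from part (i); the product of first moments combines with the fourth-order mixed moment to produce $\mathbb{C}\mathrm{ov}(L_1(s_1)L_2(s_2),L_1(t_1)L_2(t_2))$, yielding the stated formula. The main obstacle is part (iii): one must correctly set up the ordered-corner decomposition into independent rectangular increments and verify that stationarity forces the mean and variance of each increment to scale linearly in its area. The algebraic collapse removing the $(a_1a_2)^2$ contribution hinges precisely on $\sum_{k\ge2}|R_k|=b_1b_2-a_1a_2$; everything in parts (i)--(ii) is a routine differentiation of (\ref{tptclap}).
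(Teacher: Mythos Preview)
Your argument is correct and follows essentially the same route as the paper: parts (i)--(ii) are obtained by differentiating (\ref{tptclap}) using $\Psi'(0)=-i\,\mathbb{E}Y(1,1)$ and $\Psi''(0)=\mathbb{V}\mathrm{ar}Y(1,1)$, and part (iii) by computing $\mathbb{E}[Y(a_1,a_2)Y(b_1,b_2)]$ via the rectangular-increments structure and then conditioning on the time change. The only cosmetic difference is that the paper reaches the key identity $\mathbb{E}[Y(a_1,a_2)Y(b_1,b_2)]=a_1a_2\,\mathbb{V}\mathrm{ar}Y(1,1)+a_1a_2b_1b_2\,(\mathbb{E}Y(1,1))^2$ by an add-and-subtract telescoping rather than your explicit four-rectangle partition, but the content is identical.
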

\begin{proof}
	Note that $\mathbb{E}Y(1,1)=i\Psi'(0)$ and $\mathbb{V}\mathrm{ar}Y(1,1)=\Psi''(0)$. So, \\
	\noindent (i) from (\ref{tptclap}), we have
	\begin{equation}\label{tptcmean}
		\mathbb{E}\tilde{Y}(t_1,t_2)=-i\partial_\xi\mathbb{E}e^{i\xi\tilde{Y}(t_1,t_2)}|_{\xi=0}=i\mathbb{E}L_1(t_1)L_2(t_2)\Psi'(0)=\mathbb{E}L_1(t_1)L_2(t_2)\mathbb{E}Y(1,1).
	\end{equation}
	\noindent (ii) Also, we have
	\begin{align*}
		\mathbb{E}\tilde{Y}(t_1,t_2)^2&=-\partial_\xi^2\mathbb{E}e^{i\xi\tilde{Y}(t_1,t_2)}|_{\xi=0}\\
		&=\mathbb{E}L_1(t_1)L_2(t_2)\Psi''(0)-\mathbb{E}(L_1(t_1)L_2(t_2))^2(\Psi'(0))^2\\
		&=\mathbb{E}L_1(t_1)L_2(t_2)\mathbb{V}\mathrm{ar}Y(1,1)+\mathbb{E}(L_1(t_1)L_2(t_2))^2(\mathbb{E}Y(1,1))^2\\
		&=\mathbb{E}L_1(t_1)L_2(t_2)\mathbb{V}\mathrm{ar}Y(1,1)+\mathbb{V}\mathrm{ar}L_1(t_1)L_2(t_2)(\mathbb{E}Y(1,1))^2+(\mathbb{E}L_1(t_1)L_2(t_2)\mathbb{E}Y(1,1))^2.
	\end{align*}
	Thus, by using (\ref{tptcmean}), we get the required variance of (\ref{tptclp}).\\
	\noindent (iii) For $(s_1,s_2)\prec(t_1,t_2)$ in $\mathbb{R}^2_+$ and using the independent rectangular increments property of the outer process $Y(t_1,t_2)$, we have
	\begin{align*}
		\mathbb{E}Y(s_1,s_2)Y(t_1,t_2)&=\mathbb{E}Y(s_1,s_2)(Y(t_1,t_2)-Y(s_1,t_2)-Y(t_1,s_2)+Y(s_1,s_2))\\
		&\ \ +\mathbb{E}Y(s_1,s_2)Y(s_1,t_2)+\mathbb{E}Y(s_1,s_2)Y(t_1,s_2)-\mathbb{E}Y(s_1,s_2)^2\\
		&=\mathbb{E}Y(s_1,s_2)\mathbb{E}(Y(t_1,t_2)-Y(s_1,t_2)-Y(t_1,s_2)+Y(s_1,s_2))\\
		&\ \ +\mathbb{E}Y(s_1,s_2)\mathbb{E}(Y(s_1,t_2)-Y(s_1,s_2))+\mathbb{E}Y(s_1,s_2)^2\\
		&\ \ +\mathbb{E}Y(s_1,s_2)\mathbb{E}(Y(t_1,s_2)-Y(s_1,s_2))\\
		&=s_1s_2(t_2-s_2)(t_1-s_1)(\mathbb{E}Y(1,1))^2+s_1^2s_2(t_2-s_2)(\mathbb{E}Y(1,1))^2\\
		&\ \ +\mathbb{V}\mathrm{ar}Y(s_1,s_2)+s_1^2s_2^2(\mathbb{E}Y(1,1))^2+s_1s_2^2(t_1-s_1)(\mathbb{E}Y(1,1))^2\\
		&=s_1s_2t_1t_2(\mathbb{E}Y(1,1))^2+s_1s_2\mathbb{V}\mathrm{ar}Y(1,1).
	\end{align*}
	Thus, from (\ref{tptclp}), it follows that
	\begin{equation*}
		\mathbb{E}\tilde{Y}(s_1,s_2)\tilde{Y}(t_1,t_2)=\mathbb{E}L_1(s_1)L_2(s_2)L_1(t_1)L_2(t_2)(\mathbb{E}Y(1,1))^2+\mathbb{E}L_1(s_1)L_2(s_2)\mathbb{V}\mathrm{ar}Y(1,1).
	\end{equation*}
	This completes the proof.
\end{proof}
\begin{remark}
	If $\mathbb{E}Y(1,1)=0$ then $\tilde{Y}(t_1,t_2)$ has zero mean. Also, we have $\mathbb{V}\mathrm{ar}\tilde{Y}(t_1,t_2)=\mathbb{E}L_1(t_1)L_2(t_2)\mathbb{V}\mathrm{ar}Y(1,1)$ and
	$		\mathbb{C}\mathrm{ov}(\tilde{Y}(s_1,s_2),\tilde{Y}(t_1,t_2))=\mathbb{E}L_1(s_1)L_2(s_2)\mathbb{V}\mathrm{ar}Y(1,1).
	$
	Thus, the auto correlation function of the two parameter process (\ref{tptclp}) is given by
	\begin{equation*}
		\mathbb{C}\mathrm{orr}(\tilde{Y}(s_1,s_2),\tilde{Y}(t_1,t_2))=\sqrt{\frac{\mathbb{E}L_1(s_1)L_2(s_2)}{\mathbb{E}L_1(t_1)L_2(t_2)}},\ (s_1,s_2)\prec(t_1,t_2).
	\end{equation*}
\end{remark}
\begin{example}
	Let $\{L_{\beta_1}(t_1),\ t_1\ge0\}$ and $\{L_{\beta_2}(t_2),\ t_2\ge0\}$ be inverse stable subordinators with indices $\beta_1\in(0,1)$ and $\beta_2\in(0,1)$, respectively, and let $\{\mathscr{N}(t_1,t_2),\ (t_1,t_2)\in\mathbb{R}^2_+\}$ be the PRF with parameter $\lambda>0$. It is assumed that all these processes are independent of each other.  Leonenko and Merzbach \cite{Leonenko2015} introduced and studied the following time-changed PRF:
	$
		\{\mathscr{N}(L_{\beta_1}(t_1),L_{\beta_2}(t_2)),\ (t_1,t_2)\in\mathbb{R}^2_+\}.
	$ For more details on this process, we refer the reader to \cite{Aletti2018} and \cite{Kataria2024}.
	
	 For $i=1,2$, it is known that (see \cite{Leonenko2014})
	\begin{equation*}
		\mathbb{E}L_{\beta_i}(t_i)=\frac{t_i^{\beta_i}}{\Gamma(\beta_i+1)},\  \mathbb{V}\mathrm{ar}L_{\beta_i}(t_i)=t_i^{2\beta_i}\bigg(\frac{2}{\Gamma(2\beta_i+1)}-\frac{1}{\Gamma(\beta_i+1)^2}\bigg)
	\end{equation*}
	and for $s_i<t_i$, we have
	\begin{equation*}
		\mathbb{C}\mathrm{ov}(L_{\beta_i}(s_i),L_{\beta_i}(t_i))=\frac{1}{\Gamma(\beta_i+1)\Gamma(\beta_i)}\int_{0}^{s_i}((t_i-x)^{\beta_i}+(s_i-x)^{\beta_i})x^{\beta_i-1}\,\mathrm{d}x-\frac{(s_it_i)^{\beta_i}}{\Gamma(\beta_i+1)^2}.
	\end{equation*}
	So, by using the independence of $L_{\beta_i}(t_i)$'s, we get
	\begin{align*}
		\mathbb{V}\mathrm{ar}L_{\beta_1}(t_1)L_{\beta_2}(t_2)&=\mathbb{E}(L_{\beta_1}(t_1)L_{\beta_2}(t_2))^2-(\mathbb{E}L_{\beta_1}(t_1)L_{\beta_2}(t_2))^2\\
		&=\mathbb{V}\mathrm{ar}L_{\beta_1}(t_1)\mathbb{E}L_{\beta_2}(t_2)^2+(\mathbb{E}L_{\beta_1}(t_1))^2\mathbb{V}\mathrm{ar}L_{\beta_2}(t_2)\\
		&=\mathbb{V}\mathrm{ar}L_{\beta_1}(t_1)\mathbb{V}\mathrm{ar}L_{\beta_2}(t_2)+\mathbb{V}\mathrm{ar}L_{\beta_1}(t_1)(\mathbb{E}L_{\beta_2}(t_2))^2+(\mathbb{E}L_{\beta_1}(t_1))^2\mathbb{V}\mathrm{ar}L_{\beta_2}(t_2)\\
		&=t_1^{2\beta_1}t_2^{2\beta_2}\bigg(\frac{2}{\Gamma(2\beta_1+1)}-\frac{1}{\Gamma(\beta_1+1)^2}\bigg)\bigg(\frac{2}{\Gamma(2\beta_2+1)}-\frac{1}{\Gamma(\beta_2+1)^2}\bigg)\\
		&\ \ +\bigg(\frac{2}{\Gamma(2\beta_1+1)}-\frac{1}{\Gamma(\beta_1+1)^2}\bigg)\frac{t_1^{2\beta_1}t_2^{2\beta_2}}{\Gamma(\beta_2+1)^2}\\
		&\ \ +\bigg(\frac{2}{\Gamma(2\beta_2+1)}-\frac{1}{\Gamma(\beta_2+1)^2}\bigg)\frac{t_1^{2\beta_1}t_2^{2\beta_2}}{\Gamma(\beta_1+1)^2}\\
		&=t_1^{2\beta_1}t_2^{2\beta_2}\bigg(\frac{4}{\Gamma(2\beta_1+1)\Gamma(2\beta_2+1)}-\frac{1}{\Gamma(\beta_1+1)^2\Gamma(\beta_2+1)^2}\bigg).
	\end{align*}
	Also, for $(s_1,s_2)\prec(t_1,t_2)$, we have
	\begin{align*}
		\mathbb{C}\mathrm{ov}(&L_{\beta_1}(s_1)L_{\beta_2}(s_2),L_{\beta_1}(t_1)L_{\beta_2}(t_2))\\
		&=\mathbb{E}L_{\beta_1}(s_1)L_{\beta_2}(s_2)L_{\beta_1}(t_1)L_{\beta_2}(t_2)-\mathbb{E}L_{\beta_1}(s_1)L_{\beta_2}(s_2)\mathbb{E}L_{\beta_1}(t_1)L_{\beta_2}(t_2)\\
		&=\mathbb{E}L_{\beta_1}(s_1)L_{\beta_1}(t_1)\mathbb{E}L_{\beta_2}(s_2)L_{\beta_2}(t_2)-\mathbb{E}L_{\beta_1}(s_1)\mathbb{E}L_{\beta_1}(t_1)\mathbb{E}L_{\beta_2}(s_2)\mathbb{E}L_{\beta_2}(t_2)\\
		&=\mathbb{C}\mathrm{ov}(L_{\beta_1}(s_1),L_{\beta_1}(t_1))\mathbb{E}L_{\beta_2}(s_2)L_{\beta_2}(t_2)+\mathbb{E}L_{\beta_1}(s_1)\mathbb{E}L_{\beta_1}(t_1)\mathbb{C}\mathrm{ov}(L_{\beta_2}(s_2),L_{\beta_2}(t_2))\\
		&=\mathbb{C}\mathrm{ov}(L_{\beta_1}(s_1),L_{\beta_1}(t_1))\mathbb{C}\mathrm{ov}(L_{\beta_2}(s_2),L_{\beta_2}(t_2))+\mathbb{C}\mathrm{ov}(L_{\beta_1}(s_1),L_{\beta_1}(t_1))\mathbb{E}L_{\beta_2}(s_2)\mathbb{E}L_{\beta_2}(t_2)\\
		&\ \ +\mathbb{E}L_{\beta_1}(s_1)\mathbb{E}L_{\beta_1}(t_1)\mathbb{C}\mathrm{ov}(L_{\beta_2}(s_2),L_{\beta_2}(t_2))\\
		&=\bigg(\frac{1}{\Gamma(\beta_1+1)\Gamma(\beta_1)}\int_{0}^{s_1}((t_1-x)^{\beta_1}+(s_1-x)^{\beta_1})x^{\beta_1-1}\,\mathrm{d}x-\frac{(s_1t_1)^{\beta_1}}{\Gamma(\beta_1+1)^2}\bigg)\\
		&\ \ \cdot\bigg(\frac{1}{\Gamma(\beta_2+1)\Gamma(\beta_2)}\int_{0}^{s_2}((t_2-x)^{\beta_2}+(s_2-x)^{\beta_2})x^{\beta_2-1}\,\mathrm{d}x-\frac{(s_2t_2)^{\beta_2}}{\Gamma(\beta_2+1)^2}\bigg)\\
		&\ \ +\bigg(\frac{1}{\Gamma(\beta_1+1)\Gamma(\beta_1)}\int_{0}^{s_1}((t_1-x)^{\beta_1}+(s_1-x)^{\beta_1})x^{\beta_1-1}\,\mathrm{d}x-\frac{(s_1t_1)^{\beta_1}}{\Gamma(\beta_1+1)^2}\bigg)\frac{(s_2t_2)^{\beta_2}}{\Gamma(\beta_2+1)^2}\\
		&\ \ + \bigg(\frac{1}{\Gamma(\beta_2+1)\Gamma(\beta_2)}\int_{0}^{s_2}((t_2-x)^{\beta_2}+(s_2-x)^{\beta_2})x^{\beta_2-1}\,\mathrm{d}x-\frac{(s_2t_2)^{\beta_2}}{\Gamma(\beta_2+1)^2}\bigg)\frac{(s_1t_1)^{\beta_1}}{\Gamma(\beta_1+1)^2}\\
		&=\prod_{i=1}^{2}\frac{1}{\Gamma(\beta_i+1)\Gamma(\beta_i)}\int_{0}^{s_i}((t_i-x_i)^{\beta_i}+(s_i-x_i)^{\beta_i})x_i^{\beta_i-1}\,\mathrm{d}x_i-\prod_{i=1}^{2}\frac{(s_it_i)^{\beta_i}}{\Gamma(\beta_i+1)^2}.
	\end{align*}
	Thus, from Theorem \ref{thmtptc} it follows that
	\begin{align*}
		\mathbb{E}\mathscr{N}(L_{\beta_1}(t_1),L_{\beta_2}(t_2))&=\frac{\lambda t_1^{\beta_1}t_2^{\beta_2}}{\Gamma(\beta_1+1)\Gamma(\beta_2+1)},\\
		\mathbb{V}\mathrm{ar}\mathscr{N}(L_{\beta_1}(t_1),L_{\beta_2}(t_2))&=\frac{\lambda t_1^{\beta_1}t_2^{\beta_2}}{\Gamma(\beta_1+1)\Gamma(\beta_2+1)}\\
		&\ \ +(\lambda t_1^{\beta_1}t_2^{\beta_2})^2\bigg(\frac{4}{\Gamma(2\beta_1+1)\Gamma(2\beta_2+1)}-\frac{1}{\Gamma(\beta_1+1)^2\Gamma(\beta_2+1)^2}\bigg)
	\end{align*}
	and
	\begin{align*}
		\mathbb{C}\mathrm{ov}(\mathscr{N}(L_{\beta_1}(s_1)&,L_{\beta_2}(s_2)),\mathscr{N}(L_{\beta_1}(t_1),L_{\beta_2}(t_2)))\\
		&=\prod_{i=1}^{2}\frac{\lambda}{\Gamma(\beta_i+1)\Gamma(\beta_i)}\int_{0}^{s_i}((t_i-x_i)^{\beta_i}+(s_i-x_i)^{\beta_i})x_i^{\beta_i-1}\,\mathrm{d}x_i-\prod_{i=1}^{2}\frac{\lambda(s_it_i)^{\beta_i}}{\Gamma(\beta_i+1)^2}\\
		&\ \ +\frac{\lambda s_1^{\beta_1}s_2^{\beta_2}}{\Gamma(\beta_1+1)\Gamma(\beta_2+1)},\ (s_1,s_2)\prec(t_1,t_2),
	\end{align*}
	which agree with the results obtained in \cite{Leonenko2015}.
\end{example}
\section{Some time-changed Poisson random fields}\label{sec3}
In this section, we study some time-changed variants of the PRF where the time changing components used are stable subordinators, inverse stable subordinators and their composition.

 Let $\{\mathscr{N}(t_1,t_2),\ (t_1,t_2)\in\mathbb{R}^2_+\}$ be the PRF and $\{H^{\alpha_i,\beta_i}(t),\ t\ge0\}$,  $\alpha_i\in(0,1]$, $\beta_i\in(0,1]$, $i=1,2$ be two random processes as defined in (\ref{driftdef}). Let us assume that all three processes are independent of each other. We consider a time-changed random field $\{\mathscr{N}_{\alpha_1,\beta_1}^{\alpha_2,\beta_2}(t_1,t_2),\ (t_1,t_2)\in\mathbb{R}^2_+\}$ defined as follows:
\begin{equation}\label{tprf}
\mathscr{N}_{\alpha_1,\beta_1}^{\alpha_2,\beta_2}(t_1,t_2)\coloneqq\mathscr{N}(H^{\alpha_1,\beta_1}(t_1),H^{\alpha_2,\beta_2}(t_2)),\ (t_1,t_2)\in\mathbb{R}^2_+.
\end{equation}
\begin{remark}
	For $\alpha_1=\alpha_2=1$, the process (\ref{tprf}) reduces to the following time-changed PRF introduced and studied in \cite{Leonenko2015}:
\begin{equation*}\label{fprf}
		\mathscr{N}_{1,\beta_1}^{1,\beta_2}(t_1,t_2)\coloneqq\mathscr{N}(L_{\beta_1}(t_1),L_{\beta_2}(t_2)),\ (t_1,t_2)\in\mathbb{R}^2_+,
\end{equation*}
	where $\{L_{\beta_1}(t_1),\ t_1\ge0\}$ and $\{L_{\beta_2}(t_2),\ t_2\ge0\}$ are independent inverse stable subordinators as defined in (\ref{insubdef}), and are independent of the PRF. Recently, Kataria and Vishwakarma \cite{Kataria2024} defined a fractional Poisson random field $\{\mathcal{N}_{\beta_1,\beta_2}(t_1,t_2),\ (t_1,t_2)\in\mathbb{R}^2_+\}$ whose distribution $p_{\beta_1,\beta_2}(n,t_1,t_2)=\mathrm{Pr}\{\mathcal{N}_{\beta_1,\beta_2}(t_1,t_2)=n\}$, $n\ge0$ solves the following system of fractional differential equations:
	\begin{equation*}
	\mathcal{D}_{t_2}^{\beta_2}\mathcal{D}_{t_1}^{\beta_1}p_{\beta_1,\beta_2}(n,t_1,t_2)=(n+1)\lambda p_{\beta_1,\beta_2}(n+1,t_1,t_2)-(2n+1)\lambda p_{\beta_1,\beta_2}(n,t_1,t_2)+n\lambda p_{\beta_1,\beta_2}(n-1,t_1,t_2),
	\end{equation*}
	with $p_{\beta_1,\beta_2}(0,0,0)=1$. Here, $\mathcal{D}_t^\beta$ is the Caputo fractional derivative operator as defined in (\ref{caputo}). Its solution is given by
\begin{equation*}
p_{\beta_1,\beta_2}(n,t_1,t_2)=\sum_{k=n}^{\infty}\frac{(-1)^{k-n}(k)_{k-n}(k)_{n}(\lambda t_1^{\beta_1}t_2^{\beta_2})^{k}}{\Gamma(k\beta_1+1)\Gamma(k\beta_2+1)},\ n\ge0,
\end{equation*}
where $(k)_n=k(k-1)\dots(k-n+1)$ and $(k)_0=1$.	Moreover, they established that $\mathcal{N}_{\beta_1,\beta_2}(t_1,t_2)\overset{d}{=}\mathscr{N}_{1,\beta_1}^{1,\beta_2}(t_1,t_2)$ for all $(t_1,t_2)\in\mathbb{R}^2_+$.
\end{remark}

For $i=1,2$, let $f_{\alpha_i,\beta_i}(x_i,t_i)$, $x_i\ge0$, $t_i\ge0$ be the density of $H^{\alpha_i,\beta_i}(t_i)$.  Then, the distribution $p_{\alpha_1,\beta_1}^{\alpha_2,\beta_2}(n,t_1,t_2)=\mathrm{Pr}\{\mathscr{N}_{\alpha_1,\beta_1}^{\alpha_2,\beta_2}(t_1,t_2)=n\}$, $n\ge0$  of (\ref{tprf}) is given by
\begin{align*}
p_{\alpha_1,\beta_1}^{\alpha_2,\beta_2}(n,t_1,t_2)&=\int_{0}^{\infty}\int_{0}^{\infty}p(n,s_1,s_2)f_{\alpha_1,\beta_1}(s_1,t_1)f_{\alpha_2,\beta_2}(s_2,t_2)\,\mathrm{d}s_1\,\mathrm{d}s_2\nonumber\\
		&=\frac{\lambda^n}{n!}\int_{0}^{\infty}\int_{0}^{\infty}(s_1s_2)^ne^{-\lambda s_1s_2}f_{\alpha_1,\beta_1}(s_1,t_1)f_{\alpha_2,\beta_2}(s_2,t_2)\,\mathrm{d}s_1\,\mathrm{d}s_2\nonumber\\
		&=\frac{(-\lambda\partial_\lambda)^n}{n!}\int_{0}^{\infty}\int_{0}^{\infty}e^{-\lambda s_1s_2}f_{\alpha_1,\beta_1}(s_1,t_1)f_{\alpha_2,\beta_2}(s_2,t_2)\,\mathrm{d}s_1\,\mathrm{d}s_2\nonumber\\
		&=\frac{(-\lambda\partial_\lambda)^n}{n!}\int_{0}^{\infty}E_{\beta_2,1}(-t_2^{\beta_2}(\lambda s_1)^{\alpha_2})f_{\alpha_1,\beta_1}(s_1,t_1)\,\mathrm{d}s_1,
	\end{align*}
	where we have used (\ref{driftlap}) in the last step.

\begin{remark}For $\beta_1=1$ and $\alpha_2=1$, the process (\ref{tprf}) reduces to 
$\mathscr{N}_{\alpha_1,1}^{1,\beta_2}(t_1,t_2)\coloneqq\mathscr{N}(S_{\alpha_1}(t_1),L_{\beta_2}(t_2))$, $ (t_1,t_2)\in\mathbb{R}^2_+$,
where $\{S_{\alpha_1}(t_1),\ t_1\ge0\}$ and $\{L_{\beta_2}(t_2),\ t_2\ge0\}$ are independent $\alpha_1$-stable subordinator and inverse $\beta_2$-stable subordinator. Its distribution is given by
	\begin{align*}
		p_{\alpha_1,1}^{1,\beta_2}(n,t_1,t_2)&=\int_{0}^{\infty}\int_{0}^{\infty}p(n,s_1,s_2)\mathrm{Pr}\{S_{\alpha_1}(t_1)\in\mathrm{d}s_1\}\mathrm{Pr}\{L_{\beta_2}(t_2)\in\mathrm{d}s_2\}\\
		&=\frac{(-\lambda\partial_\lambda)^n}{n!}\int_{0}^{\infty}\int_{0}^{\infty}e^{-\lambda s_1s_2}\mathrm{Pr}\{S_{\alpha_1}(t_1)\in\mathrm{d}s_1\}\mathrm{Pr}\{L_{\beta_2}(t_2)\in\mathrm{d}s_2\}\\
		&=\frac{(-\lambda\partial_\lambda)^n}{n!}\int_{0}^{\infty}e^{-t_1(\lambda s_2)^{\alpha_1}}\mathrm{Pr}\{L_{\beta_2}(t_2)\in\mathrm{d}s_2\},
	\end{align*}
	which on taking the Laplace transform with respect to $t_2$ and using (\ref{inslap}) yields
\begin{align}
	\int_{0}^{\infty}e^{-z_2t_2}\mathrm{Pr}\{\mathscr{N}(S_{\alpha_1}(t_1),L_{\beta_2}(t_2))=n\}\,\mathrm{d}t_2&=\frac{(-\lambda\partial_\lambda)^n}{n!}z_2^{\beta_2-1}\int_{0}^{\infty}e^{-t_1(\lambda s_2)^{\alpha_1}}e^{-z_2^{\beta_2}s_2}\,\mathrm{d}s_2\nonumber\\
	&=\frac{(-\lambda\partial_\lambda)^n}{n!}z_2^{\beta_2-1}\sum_{k=0}^{\infty}\frac{(-t_1\lambda^{\alpha_1})^k}{k!}\int_{0}^{\infty}s_2^{\alpha_1 k}e^{-z_2^{\beta_2}s_2}\,\mathrm{d}s_2\nonumber\\
	&=\frac{(-\lambda\partial_\lambda)^n}{n!}\sum_{k=0}^{\infty}\frac{(-t_1\lambda^{\alpha_1})^k}{k!}\frac{\Gamma(\alpha_1 k+1)}{z_2^{\alpha_1\beta_2k+1}}.\label{dc1}
\end{align}
On taking the inverse Laplace transform with respect to $z_2$ on both sides of (\ref{dc1}), we get
\begin{align*}
	\mathrm{Pr}\{\mathscr{N}(S_{\alpha_1}(t_1),L_{\beta_2}(t_2))=n\}&=\frac{(-\lambda\partial_\lambda)^n}{n!}\sum_{k=0}^{\infty}\frac{\Gamma(\alpha_1 k+1)(-t_1t_2^{\alpha_1\beta_2}\lambda^{\alpha_1})^k}{k!\Gamma(\alpha_1\beta_2k+1)}\nonumber\\
	&=\frac{(-\lambda\partial_\lambda)^n}{n!}{}_1\Psi_1\left[\begin{matrix}
		(1,\alpha_1)\\\\
		(1,\alpha_1\beta_2)
	\end{matrix}\Bigg|-t_1t_2^{\alpha_1\beta_2}\lambda^{\alpha_1}\right],\ n\ge0,
\end{align*}
where ${}_l\Psi_m$, $1\leq l,m<\infty$ is the generalized Wright function defined as follows (see \cite{Kilbas2006}, p. 56):
\begin{equation}\label{genwrit}
	{}_l\Psi_m\left[\begin{matrix}
		(a_1,\alpha_1),\dots,(a_l,\alpha_l)\\\\
		(b_1,\beta_1),\dots,(b_m,\alpha_m)
	\end{matrix}\Bigg| x \right]=\sum_{n=0}^{\infty}\frac{\prod_{i=1}^{l}\Gamma(a_i+n\alpha_i)x^n}{\prod_{j=1}^{m}\Gamma(b_j+n\beta_j)n!},\ x\in\mathbb{R}.
\end{equation}
\end{remark}

 Next, we obtain the closed form expression for the distribution of time-changed PRF $\{\mathscr{N}(H^{\alpha_1,\beta_1}$ $(t_1),t_2),\ (t_1,t_2)\in\mathbb{R}^2_+\}$ and derive its governing system of differential equations. In the case of Poisson process, Beghin and  D'Ovidio \cite{Beghin2014} used the Markov property of Poisson process to derive the corresponding governing system of  equations. However, we do not have any such property for the PRF. Here, we use the governing equation for the density of $H^{\alpha_1,\beta_1}(t_1)$ to prove the following result:
\begin{theorem}\label{thmtc1}
For $(t_1,t_2)\in\mathbb{R}^2_+$, the distribution $p_{\alpha_1}^{\beta_1}(n,t_1,t_2)=\mathrm{Pr}\{\mathscr{N}_{\alpha_1,\beta_1}^{1,1}(t_1,t_2)=n\}$, $n\ge0$  is given by
\begin{equation}\label{dist1f}
	p_{\alpha_1}^{\beta_1}(n,t_1,t_2)=\frac{(-\lambda\partial_\lambda)^n}{n!}E_{\beta_1,1}(-(\lambda t_2)^{\alpha_1}t_1^{\beta_1}),
\end{equation}
which solves the following system of fractional differential equations:
\begin{equation}\label{fprftc1}
	\mathcal{D}^{\beta_1}_{t_1}p_{\alpha_1}^{\beta_1}(n,t_1,t_2)=-\lambda^{\alpha_1}t_2^{\alpha_1}(I-B)^{\alpha_1}p_{\alpha_1}^{\beta_1}(n,t_1,t_2),\ n\ge0,\ t_2\ge0,
\end{equation}
with initial condition $p_{\alpha_1}^{\beta_1}(0,0,t_2)=1$. Here, $\mathcal{D}^{\beta_1}_{t_1}$ is the Caputo fractional derivative operator as defined in (\ref{caputo}), and 
\begin{equation}\label{grnbino}
	(I-B)^{\alpha_1}=\sum_{r=0}^{\infty}(-1)^r\frac{(\alpha_1)_r}{r!}B^r,
\end{equation}
where $(\alpha_1)_r=\alpha_1(\alpha_1-1)\dots(\alpha_1-r+1)$.
\end{theorem}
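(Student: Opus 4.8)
The plan is to first pin down the closed form \eqref{dist1f} and then feed it into the fractional time operator. Since the $1$-stable subordinator is the deterministic drift and the inverse $1$-stable subordinator is the identity, we have $H^{1,1}(t_2)=t_2$ almost surely, so that $\mathscr{N}_{\alpha_1,\beta_1}^{1,1}(t_1,t_2)=\mathscr{N}(H^{\alpha_1,\beta_1}(t_1),t_2)$. Conditioning on $H^{\alpha_1,\beta_1}(t_1)=s_1$ and using the PRF distribution \eqref{prfdist}, I would write
\begin{equation*}
p_{\alpha_1}^{\beta_1}(n,t_1,t_2)=\frac{\lambda^n t_2^n}{n!}\int_0^\infty s_1^n e^{-\lambda s_1 t_2}f_{\alpha_1,\beta_1}(s_1,t_1)\,\mathrm{d}s_1.
\end{equation*}
Then, using the operator identity $\frac{(\lambda a)^n}{n!}e^{-\lambda a}=\frac{(-\lambda\partial_\lambda)^n}{n!}e^{-\lambda a}$ (with $a=s_1t_2$ and the convention $(-\lambda\partial_\lambda)^n=(-1)^n\lambda^n\partial_\lambda^n$), I would pull the $\lambda$-operator outside the integral, leaving $\int_0^\infty e^{-\lambda t_2 s_1}f_{\alpha_1,\beta_1}(s_1,t_1)\,\mathrm{d}s_1=\mathbb{E}e^{-\lambda t_2 H^{\alpha_1,\beta_1}(t_1)}$, which by \eqref{driftlap} equals $E_{\beta_1,1}(-(\lambda t_2)^{\alpha_1}t_1^{\beta_1})$. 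This gives \eqref{dist1f}, and the initial condition follows at once: at $t_1=0$ one has $E_{\beta_1,1}(0)=1$, while for $n=0$ the operator $\frac{(-\lambda\partial_\lambda)^0}{0!}$ is the identity, so $p_{\alpha_1}^{\beta_1}(0,0,t_2)=1$.

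For the governing equation, write $G(t_1)=E_{\beta_1,1}(-(\lambda t_2)^{\alpha_1}t_1^{\beta_1})$, so $p_{\alpha_1}^{\beta_1}(n,t_1,t_2)=\frac{(-\lambda\partial_\lambda)^n}{n!}G(t_1)$. Since $\mathcal{D}_{t_1}^{\beta_1}$ acts only in $t_1$ while $\frac{(-\lambda\partial_\lambda)^n}{n!}$ acts only in $\lambda$, I would commute them (justified by differentiating under the integral in the definition \eqref{caputo}) and invoke the eigenfunction property of the Mittag-Leffler function, namely $\mathcal{D}_{t_1}^{\beta_1}E_{\beta_1,1}(-c t_1^{\beta_1})=-c\,E_{\beta_1,1}(-c t_1^{\beta_1})$ with $c=(\lambda t_2)^{\alpha_1}$. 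This is the standard termwise computation using $\mathcal{D}_{t_1}^{\beta_1}t_1^{\beta_1 k}=\frac{\Gamma(\beta_1 k+1)}{\Gamma(\beta_1(k-1)+1)}t_1^{\beta_1(k-1)}$ for $k\ge1$ and $\mathcal{D}_{t_1}^{\beta_1}1=0$, followed by re-indexing the series. Hence $\mathcal{D}_{t_1}^{\beta_1}p_{\alpha_1}^{\beta_1}(n,t_1,t_2)=-t_2^{\alpha_1}\frac{(-\lambda\partial_\lambda)^n}{n!}\big(\lambda^{\alpha_1}G(t_1)\big)$.

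The crux, and the step I expect to be the main obstacle, is the intertwining relation
\begin{equation*}
\frac{(-\lambda\partial_\lambda)^n}{n!}\big(\lambda^{\alpha_1}G\big)=\lambda^{\alpha_1}(I-B)^{\alpha_1}\,\frac{(-\lambda\partial_\lambda)^n}{n!}G,
\end{equation*}
which converts multiplication by $\lambda^{\alpha_1}$ on the continuous side into the fractional difference operator \eqref{grnbino} on the discrete index $n$ (here $B$ is the backward shift $Bp(n)=p(n-1)$, with $p(-1)=0$). I would prove it by the Leibniz rule: expanding $\partial_\lambda^n(\lambda^{\alpha_1}G)=\sum_{r=0}^n\binom{n}{r}(\alpha_1)_r\lambda^{\alpha_1-r}\partial_\lambda^{n-r}G$, where $(\alpha_1)_r=\alpha_1(\alpha_1-1)\cdots(\alpha_1-r+1)$ is exactly the falling factorial of \eqref{grnbino}, and then reassembling each summand into $\frac{(-\lambda\partial_\lambda)^{n-r}}{(n-r)!}G=p_{\alpha_1}^{\beta_1}(n-r,t_1,t_2)$. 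After collecting signs and factorials the expression becomes $\lambda^{\alpha_1}\sum_{r=0}^n(-1)^r\frac{(\alpha_1)_r}{r!}p_{\alpha_1}^{\beta_1}(n-r,t_1,t_2)$, and since $p_{\alpha_1}^{\beta_1}(n-r,\cdot)=0$ for $r>n$ the finite sum coincides with $\lambda^{\alpha_1}(I-B)^{\alpha_1}p_{\alpha_1}^{\beta_1}(n,t_1,t_2)$. Combining this with the expression from the previous paragraph yields $\mathcal{D}_{t_1}^{\beta_1}p_{\alpha_1}^{\beta_1}(n,t_1,t_2)=-\lambda^{\alpha_1}t_2^{\alpha_1}(I-B)^{\alpha_1}p_{\alpha_1}^{\beta_1}(n,t_1,t_2)$, which is \eqref{fprftc1}. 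The sign bookkeeping in the Leibniz reassembly is the delicate point, since it is what produces precisely the generalized binomial coefficients of $(I-B)^{\alpha_1}$ rather than some other combinatorial weights.
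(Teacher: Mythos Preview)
Your argument is correct. For the closed form \eqref{dist1f} you take essentially the same route as the paper---conditioning, the operator identity $\frac{(\lambda a)^n}{n!}e^{-\lambda a}=\frac{(-\lambda\partial_\lambda)^n}{n!}e^{-\lambda a}$ (with the paper's implicit convention $(-\lambda\partial_\lambda)^n=(-1)^n\lambda^n\partial_\lambda^n$), and then the Laplace transform of $H^{\alpha_1,\beta_1}$---though you invoke \eqref{driftlap} directly where the paper detours through a Laplace transform in $t_1$ via \eqref{driftlap2} followed by inversion.

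For the governing equation \eqref{fprftc1}, however, your approach is genuinely different from the paper's main proof. The paper works at the level of the integral representation $\int_0^\infty p(n,s_1,t_2)f_{\alpha_1,\beta_1}(s_1,t_1)\,\mathrm{d}s_1$: it applies $\mathcal{D}_{t_1}^{\beta_1}$ under the integral, uses the governing equation \eqref{driftequ} to convert this into the Riemann--Liouville operator $\partial_{s_1}^{\alpha_1}$ acting on $f_{\alpha_1,\beta_1}$, integrates by parts to throw the operator onto $p(n,s_1,t_2)$, feeds in $\partial_{s_1}p(n,s_1,t_2)=-\lambda t_2(I-B)p(n,s_1,t_2)$, and then does a Gamma-function computation to produce $(I-B)^{\alpha_1}$. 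Your argument instead works with the closed form \eqref{dist1f}: the Mittag--Leffler eigenfunction identity disposes of $\mathcal{D}_{t_1}^{\beta_1}$ in one stroke, and the Leibniz expansion of $\partial_\lambda^n(\lambda^{\alpha_1}G)$ converts multiplication by $\lambda^{\alpha_1}$ into the fractional shift $(I-B)^{\alpha_1}$ on the index. This is shorter and more elementary, and is close in spirit to the alternative derivation the paper sketches in the remark following the theorem (via the pgf $G_{\alpha_1}^{\beta_1}(u,t_1,t_2)$ and coefficient extraction), though you bypass the generating function entirely. The paper's main proof, on the other hand, has the structural advantage that it relies only on the governing equation \eqref{driftequ} for the time-change density, so it would adapt to situations where no closed-form Laplace transform like \eqref{driftlap} is available.
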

\begin{proof}
Let $f_{\alpha_1,\beta_1}(x_1,t_1)$ be the density of $H^{\alpha_1,\beta_1}(t_1)$, $x_1\ge0$, $t_1>0$. Then, for $\alpha_2=\beta_2=1$ and $n\ge0$, from (\ref{tprf}), we have
\begin{align}
	p_{\alpha_1}^{\beta_1}(n,t_1,t_2)&=\int_{0}^{\infty}p(n,s_1,t_2)f_{\alpha_1,\beta_1}(s_1,t_1)\,\mathrm{d}s_1\label{apf1}\\
	&=\frac{(\lambda t_2)^n}{n!}\int_{0}^{\infty}s_1^ne^{-\lambda s_1t_2}f_{\alpha_1,\beta_1}(s_1,t_1)\,\mathrm{d}s_1\nonumber\\
	&=\frac{(-\lambda\partial_\lambda)^n}{n!}\int_{0}^{\infty}e^{-\lambda s_1t_2}f_{\alpha_1,\beta_1}(s_1,t_1)\,\mathrm{d}s_1\label{pf1.1}.
\end{align}
On taking the Laplace transform on both sides of (\ref{pf1.1}) with respect to variable $t_1$ and using (\ref{driftlap2}), we get
\begin{align*}
\int_{0}^{\infty}e^{-z_1t_1}	p_{\alpha_1}^{\beta_1}(n,t_1,t_2)\,\mathrm{d}t_1
&=\frac{(-\lambda\partial_\lambda)^n}{n!}z_1^{\beta_1-1}\int_{0}^{\infty}e^{-\lambda s_1t_2}s_1^{\alpha_1-1}E_{\alpha_1,\alpha_1}(-z_1^{\beta_1} s_1^{\alpha_1})\mathrm{d}s_1\\
&=\frac{(-\lambda\partial_\lambda)^nz_1^{\beta_1-1}}{n!(z_1^{\beta_1}+(\lambda t_2)^{\alpha_1})},
\end{align*}
where we have used the following result (see \cite{Kilbas2006}):
\begin{equation*}
	\int_{0}^{\infty}e^{-zt}t^{\beta-1}E_{\alpha,\beta}(ct^\alpha)\,\mathrm{d}t=\frac{z^{\alpha-\beta}}{z^\alpha-c},\ \alpha>0,\ \beta\in\mathbb{R},\ z>|c|^{1/\alpha},\ c\in\mathbb{R}.
\end{equation*}
Its inverse Laplace transform with respect to variable $z_1$ yields the required distribution. 

Now, on applying the Caputo fractional derivative operator $\mathcal{D}^{\beta_1}_{t_1}$ on both sides of (\ref{apf1}) and using the fact that it commutes with integral, we get
\begin{align}
	\mathcal{D}^{\beta_1}_{t_1}p_{\alpha_1}^{\beta_1}(n,t_1,t_2)&=\int_{0}^{\infty}p(n,s_1,t_2)\mathcal{D}^{\beta_1}_{t_1}f_{\alpha_1,\beta_1}(s_1,t_1)\,\mathrm{d}s_1\nonumber\\
	&=-\int_{0}^{\infty}p(n,s_1,t_2)\partial^{\alpha_1}_{s_1}f_{\alpha_1,\beta_1}(s_1,t_1)\,\mathrm{d}s_1,\ \ (\text{using (\ref{driftequ})}),\nonumber\\
	&=\frac{-1}{\Gamma(1-\alpha_1)}\int_{0}^{\infty}p(n,s_1,t_2)\bigg(\frac{\partial}{\partial s_1}\int_{0}^{s_1}(s_1-y)^{-\alpha_1}f_{\alpha_1,\beta_1}(y,t_1)\,\mathrm{d}y\bigg)\,\mathrm{d}s_1\nonumber\\
	&=\frac{-1}{\Gamma(1-\alpha_1)}\int_{0}^{\infty}\frac{\partial}{\partial s_1}\bigg(p(n,s_1,t_2)\int_{0}^{s_1}(s_1-y)^{-\alpha_1}f_{\alpha_1,\beta_1}(y,t_1)\,\mathrm{d}y\bigg)\,\mathrm{d}s_1\nonumber\\
	&\ \ +\frac{1}{\Gamma(1-\alpha_1)}\int_{0}^{\infty}\bigg(\int_{0}^{s_1}(s_1-y)^{-\alpha_1}f_{\alpha_1,\beta_1}(y,t_1)\,\mathrm{d}y\bigg)\frac{\partial}{\partial s_1}p(n,s_1,t_2)\,\mathrm{d}s_1\nonumber\\
	&=\frac{1}{\Gamma(1-\alpha_1)}\int_{0}^{\infty}\bigg(\int_{0}^{s_1}(s_1-y)^{-\alpha_1}f_{\alpha_1,\beta_1}(y,t_1)\,\mathrm{d}y\bigg)\frac{\partial}{\partial s_1}p(n,s_1,t_2)\,\mathrm{d}s_1.\label{tprfeqpf1}
\end{align}
From (\ref{prfdist}), it follows that 
\begin{equation}\label{prfeq}
	\frac{\partial}{\partial s_1}p(n,s_1,t_2)=-\lambda t_2(I-B)p(n,s_1,t_2),\ n\ge0,
\end{equation}
where $p(n,t_1,t_2)=0$ for all $n<0$, and $I$ is the identity operator and $B$ is the backward shift operator such that $Bp(n,t_1,t_2)=p(n-1,t_1,t_2)$.

On substituting (\ref{prfeq}) in (\ref{tprfeqpf1}), we get
\begin{align*}
	\mathcal{D}^{\beta_1}_{t_1}&p_{\alpha_1}^{\beta_1}(n,t_1,t_2)\nonumber\\
	&=\frac{-\lambda t_2}{\Gamma(1-\alpha_1)}\int_{0}^{\infty}\bigg(\int_{0}^{s_1}(s_1-y)^{-\alpha_1}f_{\alpha_1,\beta_1}(y,t_1)\,\mathrm{d}y\bigg)(I-B)p(n,s_1,t_2)\,\mathrm{d}s_1\nonumber\\
	&=\frac{-\lambda t_2}{\Gamma(1-\alpha_1)}\int_{0}^{\infty}(I-B)\bigg(\int_{y}^{\infty}(s_1-y)^{-\alpha_1}p(n,s_1,t_2)\,\mathrm{d}s_1\bigg)f_{\alpha_1,\beta_1}(y,t_1)\,\mathrm{d}y\nonumber\\
	&=\frac{-\lambda t_2}{\Gamma(1-\alpha_1)}\int_{0}^{\infty}(I-B)\bigg(\int_{0}^{\infty}w^{-\alpha_1}p(n,w+y,t_2)\,\mathrm{d}w\bigg)f_{\alpha_1,\beta_1}(y,t_1)\,\mathrm{d}y\nonumber\\
	&=\frac{-\lambda t_2}{\Gamma(1-\alpha_1)}\int_{0}^{\infty}(I-B)\bigg(\int_{0}^{\infty}w^{-\alpha_1}\frac{(\lambda(w+y)t_2)^n}{n!}e^{-\lambda(w+y)t_2}\,\mathrm{d}w\bigg)f_{\alpha_1,\beta_1}(y,t_1)\,\mathrm{d}y\nonumber\\
	&=\frac{-(\lambda t_2)^{n+1}}{\Gamma(1-\alpha_1)}\int_{0}^{\infty}(I-B)\sum_{r=0}^{n}\frac{y^{n-r}}{r!(n-r)!}e^{-\lambda yt_2}\bigg(\int_{0}^{\infty}w^{r-\alpha_1}e^{-\lambda w t_2}\,\mathrm{d}w\bigg)f_{\alpha_1,\beta_1}(y,t_1)\,\mathrm{d}y\nonumber\\
	&=\frac{-(\lambda t_2)^{\alpha_1}}{\Gamma(1-\alpha_1)}\int_{0}^{\infty}(I-B)\sum_{r=0}^{n}\frac{(y\lambda t_2)^{n-r}\Gamma(r-\alpha_1+1)}{r!(n-r)!}e^{-\lambda yt_2}f_{\alpha_1,\beta_1}(y,t_1)\,\mathrm{d}y\nonumber\\
	&=-(\lambda t_2)^{\alpha_1}\int_{0}^{\infty}(I-B)\sum_{r=0}^{n}\frac{\Gamma(r-\alpha_1+1)}{r!\Gamma(1-\alpha_1)}p(n-r,y,t_2)f_{\alpha_1,\beta_1}(y,t_1)\,\mathrm{d}y\nonumber\\
	&=-(\lambda t_2)^{\alpha_1}(I-B)\sum_{r=0}^{n}\frac{\Gamma(r-\alpha_1+1)}{r!\Gamma(1-\alpha_1)}p_{\alpha_1}^{\beta_1}(n-r,t_1,t_2)\nonumber\\
	&=-(\lambda t_2)^{\alpha_1}(I-B)\sum_{r=0}^{\infty}\frac{\Gamma(r-\alpha_1+1)}{r!\Gamma(1-\alpha_1)}B^rp_{\alpha_1}^{\beta_1}(n,t_1,t_2)\\
	&=-(\lambda t_2)^{\alpha_1}(I-B)\sum_{r=0}^{\infty}(-1)^r\frac{(\alpha_1-1)_r}{r!}B^rp_{\alpha_1}^{\beta_1}(n,t_1,t_2),
\end{align*}
where we have used the following identity to get the last step:
\begin{equation*}
	\Gamma(x-r)=(-1)^{r-1}\frac{\Gamma(-x)\Gamma(x+1)}{\Gamma(r-x+1)},\ x\notin\mathbb{Z},\ r\in\mathbb{Z}.
\end{equation*}
Here, $\mathbb{Z}$ denotes the set of integers.
Finally, on using (\ref{grnbino}), the proof completes.
\end{proof}
\begin{remark}
	Alternatively, the system of equations (\ref{fprftc1}) can be obtained by using the governing differential equation of the pgf of $\mathscr{N}_{\alpha_1,\beta_1}^{1,1}(t_1,t_2)$ as follows: 
	\begin{align*}
		G_{\alpha_1}^{\beta_1}(u,t_1,t_2)=\mathbb{E}u^{\mathscr{N}_{\alpha_1,\beta_1}^{1,1}(t_1,t_2)}&=\sum_{n=0}^{\infty}u^n\frac{(-\lambda\partial_\lambda)^n}{n!}E_{\beta_1,1}(-(\lambda t_2)^{\alpha_1}t_1^{\beta_1})\\
		&=E_{\beta_1,1}(-(\lambda t_2)^{\alpha_1}t_1^{\beta_1})\exp(-\lambda u\partial_\lambda)\\
		&=\exp(-\lambda u\partial_\lambda)\mathbb{E}\exp(-\lambda t_2H^{\alpha_1,\beta_1}(t_1))\\
		&=\mathbb{E}\exp(-\lambda t_2(1-u)H^{\alpha_1,\beta_1}(t_1))\\
		&=E_{\beta_1,1}(-\lambda^{\alpha_1}t_2^{\alpha_1}(1-u)^{\alpha_1}t_1^{\beta_1}),\ (t_1,t_2)\in\mathbb{R}^2_+,\ |u|\leq1,
	\end{align*}
	where we have used $e^{c\partial_x}f(x)=f(x+c)$.
	It is the solution of the following fractional differential equation:
	\begin{equation}\label{tprfpgfeq}
		\mathcal{D}_{t_1}^{\beta_1}G_{\alpha_1}^{\beta_1}(u,t_1,t_2)=-\lambda^{\alpha_1}t_2^{\alpha_1}(1-u)^{\alpha_1}G_{\alpha_1}^{\beta_1}(u,t_1,t_2),
	\end{equation}
	with $G_{\alpha_1}^{\beta_1}(u,0,t_2)=1$. On substituting $G_{\alpha_1}^{\beta_1}(u,t_1,t_2)=\sum_{n=0}^{\infty}u^np_{\alpha_1}^{\beta_1}(n,t_1,t_2)$ on both sides of (\ref{tprfpgfeq}), we get
	\begin{align}
		\sum_{n=0}^{\infty}u^n\mathcal{D}_{t_1}^{\beta_1}p_{\alpha_1}^{\beta_1}(n,t_1,t_2)&=-(\lambda t_2)^{\alpha_1}\sum_{l=0}^{\infty}\frac{(\alpha_1)_l}{l!}(-u)^l\sum_{n=0}^{\infty}u^np_{\alpha_1}^{\beta_1}(n,t_1,t_2)\nonumber\\
		&=-(\lambda t_2)^{\alpha_1}\sum_{k=0}^{\infty}u^k\sum_{l=0}^{k}\frac{(\alpha_1)_l}{l!}(-1)^lp_{\alpha_1}^{\beta_1}(k-l,t_1,t_2)\nonumber\\
		&=-(\lambda t_2)^{\alpha_1}\sum_{l=0}^{\infty}\sum_{k=0}^{\infty}u^k\frac{(\alpha_1)_l}{l!}(-1)^lp_{\alpha_1}^{\beta_1}(k-l,t_1,t_2)\nonumber\\
		&=-(\lambda t_2)^{\alpha_1}\sum_{l=0}^{\infty}\sum_{k=0}^{\infty}u^k\frac{(\alpha_1)_l}{l!}(-B)^lp_{\alpha_1}^{\beta_1}(k,t_1,t_2)\nonumber\\
		&=-(\lambda t_2)^{\alpha_1}\sum_{k=0}^{\infty}u^k(I-B)^{\alpha_1}p_{\alpha_1}^{\beta_1}(k,t_1,t_2).\label{coefequs}
	\end{align}
	On comparing the coefficients of $u^n$ on both sides of (\ref{coefequs}), we get the required system of governing differential equations given in (\ref{fprftc1}). 
\end{remark}
\begin{remark}
		For $\alpha_1=\alpha_2=\beta_1=1$, the random field (\ref{tprf}) reduces to a time-changed Poisson random field  (see \cite{Kataria2024}, Remark 4.4). Also, the distribution (\ref{dist1f}) reduces to
		\begin{equation*}
			p_{1,1}^{1,\beta_2}(n,t_1,t_2)=\frac{(-\lambda\partial_\lambda)^n}{n!}E_{\beta_2,1}(-\lambda t_1t_2^{\beta_2}),\ n\ge0,
		\end{equation*}
		which solves the following system of fractional differential equations:
		\begin{equation*}
			\mathcal{D}^{\beta_1}_{t_1}p_{1,1}^{1,\beta_2}(n,t_1,t_2)=-\lambda t_2(I-B)p_{1,1}^{1,\beta_2}(n,t_1,t_2),\ n\ge0,\ t_2\ge0.
		\end{equation*} 
Further, for $\alpha_2=\beta_1=\beta_2=1$, the time-changed random field defined in (\ref{tprf}) reduces to $\mathscr{N}_{\alpha_1,1}^{1,1}(t_1,t_2)\coloneqq\mathscr{N}(S_{\alpha_1}(t_1),t_2)$, $(t_1,t_2)\in\mathbb{R}^2_+$. Its one dimensional distribution is given by
\begin{equation*}
	p_{\alpha_1,1}^{1,1}(n,t_1,t_2)=\frac{(-\lambda\partial_\lambda)^n}{n!}e^{-(\lambda t_2)^{\alpha_1}t_1},\ n\ge0,
\end{equation*}
which solves 
\begin{equation*}
	\partial_{t_1}p_{\alpha_1,1}^{1,1}(n,t_1,t_2)=-\lambda^{\alpha_1}t_2^{\alpha_1}(I-B)^{\alpha_1}p_{\alpha_1}(n,t_1,t_2),\ n\ge0,\ t_2\ge0.
\end{equation*}
\end{remark}
\begin{remark}
	For $\alpha_1=\beta_1=1$, the distribution of (\ref{tprf}) is 
	\begin{equation*}
		p^{\alpha_2,\beta_2}(n,t_1,t_2)=\mathrm{Pr}\{N_{1,1}^{\alpha_2,\beta_2}(t_1,t_2)=n\}=\frac{(-\lambda\partial_\lambda)^n}{n!}E_{\beta_2,1}(-(\lambda t_1)^{\alpha_2}t_1^{\beta_2}),\ n\ge0,\ (t_1,t_2)\in\mathbb{R}^2_+,
	\end{equation*}
	which solves 
	\begin{equation*}
		\mathcal{D}_{t_2}^{\beta_2}p^{\alpha_2,\beta_2}(n,t_1,t_2)=-\lambda^{\alpha_2}t_1^{\alpha_2}(1-B)^{\alpha_2}p^{\alpha_2,\beta_2}(n,t_1,t_2),\ n\ge0,\ t_1\ge0,
	\end{equation*}
	with $p^{\alpha_2,\beta_2}(0,t_1,0)=1$. Its proof follows similar lines to that of Theorem \ref{thmtc1}. Hence, it is omitted.
\end{remark}

\section{Poisson random fields with drifts}\label{sec4}
In this section, we study PRFs with deterministic and random drifts. First, we consider the PRF on $\mathbb{R}^d_+$, $d\ge1$ with a constant drift. Let $\{\mathscr{N}(B),\ B\in\mathcal{B}\}$ be the PRF as defined in Section \ref{sec1.1}. Then, the PRF with constant drift is defined as follows:
\begin{equation}\label{rfd}
	\mathscr{N}(B)+a|B|,\ a>0,\ B\in\mathcal{B}_{d}.
\end{equation}
Its Laplace transform is given by
\begin{equation}\label{lapd=2}
	\mathbb{E}e^{-\eta\mathscr{N}(B)-a\eta|B|}=e^{-\lambda|B|}\sum_{k=0}^{\infty}\frac{(\lambda|B|)^k}{k!}e^{-\eta (a|B|+k)},\ \eta>0,
\end{equation}
whose inversion yields 
\begin{equation*}
	\mathrm{Pr}\{\mathscr{N}(B)+a|B|\in\mathrm{d}x\}/\mathrm{d}x=e^{-\lambda|B|}\sum_{k=0}^{\infty}\frac{(\lambda|B|)^k}{k!}\delta(x-a|B|-k),\ x\ge a|B|,
\end{equation*}
where $\delta(\cdot)$ is the Dirac delta function. Here, we have used the Laplace transform $\int_{0}^{\infty}e^{-\eta x}\delta(x-c)\mathrm{d}x=e^{-\eta c}$, $\eta >0$, $c>0$ of the Dirac delta function to obtain the last step (see \cite{Gradshteyn2007}).
\begin{remark}
	Let $\{\mathscr{N}(t),\ t\ge0\}$ be the homogeneous Poisson process with parameter $\lambda>0$. Then,  for $d=1$, the random field defined by (\ref{rfd}) reduces to the Poisson process with constant drift $\mathscr{N}(t)+at$, $t\ge0$, introduced and studied in \cite{Beghin2014}. Its distribution is given by
	\begin{equation*}
		p(x,t)=e^{-\lambda t}\sum_{k=0}^{\infty}\frac{(\lambda t)^k}{k!}\delta(x-at-k),\ x\ge at,\ t\ge0.
	\end{equation*}
\end{remark}
\subsection{PRF on $\mathbb{R}^2_+$ with random drift} Here, we study the Poisson random field on $\mathbb{R}^2_+$ with random drift. First, we consider the PRF $\{\mathscr{N}(t_1,t_2),\ (t_1,t_2)\in\mathbb{R}^2_+\}$ as defined in Section \ref{sec1.1}. For $d=2$, the random field defined in (\ref{rfd}) reduces to a PRF with constant drift $t_1t_2$ 
\begin{equation}\label{dprfd=2}
	\mathscr{N}(t_1,t_2)+at_1t_2,\ a>0,\ (t_1,t_2)\in\mathbb{R}^2_+.
\end{equation}
Its density function is given by
\begin{equation}\label{d=2}
	\mathrm{Pr}\{\mathscr{N}(t_1,t_2)+at_1t_2\in\mathrm{d}x\}/\mathrm{d}x=e^{-\lambda t_1t_2}\sum_{k=0}^{\infty}\frac{(\lambda t_1t_2)^k}{k!}\delta(x-a t_1t_2-k),\ x\ge at_1t_2,\ (t_1,t_2)\in\mathbb{R}^2_+.
\end{equation} 

Next result shows that the PRF with constant drift has stationary and independent increments as in Definition \ref{instain}.
\begin{proposition}
	The random field defined in (\ref{dprfd=2}) has stationary and independent rectangular increments.
\end{proposition}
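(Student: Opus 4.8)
The plan is to exploit the additive structure of the process together with the known distributional properties of the PRF increments. Writing $Y(t_1,t_2)=\mathscr{N}(t_1,t_2)+at_1t_2$, I would first note that the rectangular increment operator is linear, so that for $(s_1,s_2)\prec(t_1,t_2)$ the increment of $Y$ over the rectangle $R=(s_1,t_1]\times(s_2,t_2]$ decomposes as
\[
Y(R)=\mathscr{N}(R)+a\big(t_1t_2-s_1t_2-t_1s_2+s_1s_2\big)=\mathscr{N}(R)+a(t_1-s_1)(t_2-s_2).
\]
The key computation here is that the rectangular increment of the deterministic drift $at_1t_2$ factorizes exactly as $a(t_1-s_1)(t_2-s_2)=a|R|$, the area of $R$ scaled by $a$.

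For the stationarity claim, I would recall from Section \ref{sec1.1} that the PRF increment $\mathscr{N}(R)$ is a Poisson random variable with parameter $\lambda(t_1-s_1)(t_2-s_2)=\lambda|R|$. Hence $Y(R)$ is a Poisson variable of mean $\lambda|R|$ shifted deterministically by $a|R|$, and both the Poisson parameter and the shift depend on $(s_1,s_2,t_1,t_2)$ only through the area $|R|=(t_1-s_1)(t_2-s_2)$. Consequently the law of $Y(R)$ is a function of $|R|$ alone, which is precisely the stationarity requirement of Definition \ref{instain}(i).

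For the independence claim, I would take disjoint rectangles $R_1,\dots,R_m$ of the prescribed type and invoke the defining property of the PRF that the increments $\mathscr{N}(R_1),\dots,\mathscr{N}(R_m)$ are mutually independent. Since each $Y(R_i)=\mathscr{N}(R_i)+a|R_i|$ is obtained from $\mathscr{N}(R_i)$ by adding the deterministic constant $a|R_i|$, and translating jointly independent random variables by constants preserves their independence, the variables $Y(R_1),\dots,Y(R_m)$ are independent, establishing Definition \ref{instain}(ii).

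The argument is essentially bookkeeping, and I do not anticipate a genuine obstacle; the one point that requires care is confirming that the rectangular increment of the product $t_1t_2$ collapses exactly to the area of the rectangle, since this identity is what forces both the Poisson mean and the deterministic shift to depend on $R$ solely through $|R|$. Once that identity is secured, stationarity and independence follow immediately from the corresponding properties of the underlying PRF increments.
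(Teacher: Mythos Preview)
Your proposal is correct and follows essentially the same approach as the paper: both arguments compute the rectangular increment of the deterministic drift as $a$ times the area of the rectangle, then invoke the stationary and independent increments of the underlying PRF. The only cosmetic difference is that the paper appeals to the stationary increments property of $\mathscr{N}$ directly (writing $\mathscr{N}(R)\overset{d}{=}\mathscr{N}(h,k)$), whereas you cite the explicit Poisson distribution of $\mathscr{N}(R)$; these are equivalent.
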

\begin{proof}
	For $(h,k)$ and $(s,t)$ in $\mathbb{R}^2_+$, the increment of (\ref{dprfd=2}) over rectangle $R=(s,s+h]\times(t,t+k]$ is given by
	\begin{align*}
		\mathscr{N}(s+h,t+k)&-\mathscr{N}(s,t+k)-\mathscr{N}(s+h,t)-\mathscr{N}(s,t)+a((s+h)(t+k)-s(t+k)-(s+h)t+st)\\
		&=\mathscr{N}(s+h,t+k)-\mathscr{N}(s,t+k)-\mathscr{N}(s+h,t)-\mathscr{N}(s,t)+ahk\\
		&\overset{d}{=}\mathscr{N}(h,k)+ahk,
	\end{align*}
	where we have used the stationary increments property of the PRF. This prove that (\ref{dprfd=2}) has stationary increments.
	
	Let $R_1,R_2,\dots,R_m$ be disjoint rectangles in $\mathbb{R}^2_+$ of type $R$. Then, the increments of (\ref{dprfd=2}) over these rectangles are $\mathscr{N}(R_1)+|R_1|, \mathscr{N}(R_2)+|R_2|,\dots,\mathscr{N}(R_m)+|R_m|$, respectively, where $|R_i|$ denotes the area of rectangle $R_i$ for each $i=1,2,\dots,m$. Thus, the independent increments property of (\ref{dprfd=2}) follows from the independent increments of the PRF $\{\mathscr{N}(t_1,t_2),\ (t_1,t_2)\in\mathbb{R}^2_+\}$.
\end{proof}

\begin{proposition}
	For $i=1,2$, the density \eqref{d=2} solves the following differential equations:
	\begin{equation}\label{deg1}
		(\partial_{t_1}+at_2\partial_x+\lambda t_2(I-e^{-\partial_x}))w(x,t_1,t_2)=0,
	\end{equation}
	and 
	\begin{equation}\label{deg2}
		(\partial_{t_2}+at_1\partial_x+\lambda t_1(I-e^{-\partial_x}))w(x,t_1,t_2)=0,
	\end{equation}
	with initial conditions $w(x,0,t_2)=w(x,t_1,0)=\delta(x)$ and boundary condition $w(0,t_1,t_2)=0$. Here, $e^{\partial_x}$ is a shift operator such that $e^{c\partial_x}f(x)=f(x+c)$, $c\in\mathbb{R}$.
\end{proposition}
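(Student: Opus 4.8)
The plan is to reduce both equations to identities for the Laplace transform of $w$ in the space variable $x$, since that transform is already available in closed form from \eqref{lapd=2} and the operators appearing in \eqref{deg1}--\eqref{deg2} act as simple multipliers on the transform side. First I would compute the $x$-Laplace transform $\tilde{w}(\eta,t_1,t_2)=\int_0^\infty e^{-\eta x}w(x,t_1,t_2)\,\mathrm{d}x$. Factoring $e^{-\eta a t_1t_2}$ out of the sum in \eqref{lapd=2} and recognising $\sum_{k}(\lambda t_1t_2 e^{-\eta})^k/k!=\exp(\lambda t_1t_2 e^{-\eta})$ collapses it to the compact form
\begin{equation*}
\tilde{w}(\eta,t_1,t_2)=\exp\big(t_1t_2\,\phi(\eta)\big),\qquad \phi(\eta)=-a\eta-\lambda(1-e^{-\eta}).
\end{equation*}

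Next I would record how the two operators act under the $x$-transform. Because $w(\cdot,t_1,t_2)$ is supported in $[0,\infty)$ and satisfies the boundary condition $w(0,t_1,t_2)=0$, integration by parts turns $\partial_x$ into multiplication by $\eta$ with no boundary contribution, while the shift $e^{-\partial_x}$, which sends $f(x)\mapsto f(x-1)$, becomes multiplication by $e^{-\eta}$; hence $I-e^{-\partial_x}$ corresponds to $1-e^{-\eta}$. Applying the transform to the left-hand side of \eqref{deg1} and using $\partial_{t_1}\tilde{w}=t_2\phi(\eta)\tilde{w}$ then gives
\begin{equation*}
t_2\tilde{w}\big[\phi(\eta)+a\eta+\lambda(1-e^{-\eta})\big]=t_2\tilde{w}\cdot 0=0,
\end{equation*}
the bracket vanishing by the very definition of $\phi$. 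Equation \eqref{deg2} follows identically upon interchanging the roles of $t_1$ and $t_2$. Since the Laplace transform is injective on distributions supported on $[0,\infty)$, the vanishing of the transform forces the left-hand sides of \eqref{deg1} and \eqref{deg2} to vanish as distributions in $x$, which is the assertion.

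Finally I would verify the side conditions directly from \eqref{d=2}: setting $t_1=0$ (or $t_2=0$) collapses the series to its $k=0$ term and leaves $w=\delta(x)$, giving the initial conditions, while for $t_1,t_2>0$ the support $x\ge at_1t_2>0$ forces $w(0,t_1,t_2)=0$, giving the boundary condition. The main obstacle here is conceptual rather than computational: $w$ is a sum of Dirac masses, so $\partial_x w$ and $\partial_{t_1}w$ are genuinely distributional (involving $\delta'$ and the chain-rule factor $\partial_{t_1}\delta(x-at_1t_2-k)=-at_2\,\delta'(x-at_1t_2-k)$), and one must be careful that the operator-to-multiplier correspondence above, together with the injectivity of the transform, is what legitimises passing the identity back to $w$. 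As a cross-check one can argue directly instead: the $\delta'$ terms produced by $\partial_{t_1}w$ and by $at_2\partial_x w$ cancel, and the surviving $\delta$ terms cancel because the coefficients $a_k=e^{-\lambda t_1t_2}(\lambda t_1t_2)^k/k!$ satisfy $\partial_{t_1}a_k=\lambda t_2(a_{k-1}-a_k)$, which is precisely matched by $\lambda t_2(I-e^{-\partial_x})$ after reindexing.
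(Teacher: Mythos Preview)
Your proof is correct and follows essentially the same route as the paper: both arguments pass to the Laplace transform $\tilde w(\eta,t_1,t_2)$ in $x$, use that $\partial_x\mapsto\eta$ and $e^{-\partial_x}\mapsto e^{-\eta}$, and then invoke injectivity of the transform. The only cosmetic difference is that the paper first writes down the transformed ODE $\partial_{t_1}\tilde w+at_2\eta\tilde w+\lambda t_2(1-e^{-\eta})\tilde w=0$, solves it, and checks the solution agrees with \eqref{lapd=2}, whereas you compute $\tilde w=\exp(t_1t_2\phi(\eta))$ from \eqref{lapd=2} first and plug it in; your added direct check via the recursion $\partial_{t_1}a_k=\lambda t_2(a_{k-1}-a_k)$ is a nice bonus not present in the paper.
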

\begin{proof}
	Let $\tilde{w}(\eta, t_1,t_2)=\int_{0}^{\infty}e^{-\eta x}w(x,t_1,t_2)\, \mathrm{d}x$, $\eta>0$. Then, on taking the Laplace transform with respect to $x$ on both sides of \eqref{deg1}, we get
	\begin{equation}\label{cpf1}
		\partial_{t_1}\tilde{w}(\eta, t_1,t_2)+at_2\eta \tilde{w}(\eta, t_1,t_2)+\lambda t_2(1-e^{-\eta})\tilde{w}(\eta, t_1,t_2)=0,
	\end{equation}
	where we have used 
	\begin{align*}
		\int_{0}^{\infty}e^{-\eta x}(I-e^{-\partial_x})w(x,t_1,t_2)\,\mathrm{d}x&=\int_{0}^{\infty}e^{-\eta x}w(x,t_1,t_2)\,\mathrm{d}x-\int_{0}^{\infty}e^{-\eta x}w(x-1,t_1,t_2)\,\mathrm{d}x\\
		&=\tilde{w}(\eta, t_1,t_2)(1-e^{-\eta}).
	\end{align*}
	On solving (\ref{cpf1}) with $\tilde{w}(\eta, 0,t_2)=1$, we get
	\begin{equation}\label{laptil}
		\tilde{w}(\eta, t_1,t_2)=\exp(-at_1t_2-\lambda t_1t_2(1-e^{-\eta})),\ \eta>0.
	\end{equation}
	
	The Laplace transform  \eqref{d=2} with respect to $x$ can be obtained by taking $d=2$ in \eqref{lapd=2} and it is given by 
	\begin{equation*}
		\int_{0}^{\infty}e^{-\eta x}\mathrm{Pr}\{\mathscr{N}(t_1,t_2)+at_1t_2\in\mathrm{d}x\}=\exp(-\eta a t_1t_2-\lambda t_1 t_2(1-e^{-\eta})),
	\end{equation*}
	which coincides with (\ref{laptil}).
	Thus, on using the uniqueness of Laplace transform, the density function \eqref{d=2} solves \eqref{deg1}. Similarly, it can be established that it is a solution of \eqref{deg2}.  This completes the proof.
\end{proof}

\begin{remark}
	Let $f(\cdot)$ be an absolutely integrable function with compact support on $(0,\infty)$. Consider a two parameter operator
	\begin{equation*}
		P_{t_1,t_2}f(x)=\mathbb{E}f(x-\mathscr{N}(t_1,t_2)-at_1t_2)=e^{-\lambda t_1t_2}\sum_{k=0}^{\infty}\frac{(\lambda t_1t_2)^k}{k!}f(x-at_1t_2-k),\ x>0,
	\end{equation*}
	whose Laplace transform is 
	$
	\tilde{P}_{t_1,t_2}f(\eta)=\int_{0}^{\infty}e^{-\eta x}P_{t_1,t_2}f(x)\,\mathrm{d}x=\tilde{f}(\eta)\tilde{p}(\eta,t_1,t_2).
	$
	In particular, if $f(x)=\delta(x)$ then $p(x,t_1,t_2)=P_{t_1,t_2}\delta(x)$. Also, for $s_1\ge0$, we have
	\begin{align*}
		P_{s_1+t_1,t_2}f(x)&=e^{-\lambda (s_1+t_1)t_2}\sum_{k=0}^{\infty}\frac{(\lambda (s_1+t_1)t_2)^k}{k!}f(x-as_1t_2-at_1t_2-k)\\
		&=e^{-\lambda (s_1+t_1)t_2}\sum_{k=0}^{\infty}(\lambda t_2)^k\sum_{r=0}^{k}\frac{s_1^rt_1^{k-r}}{r!(k-r)!}f(x-as_1t_2-at_1t_2-k)\\
		&=e^{-\lambda (s_1+t_1)t_2}\sum_{r=0}^{\infty}\sum_{k=r}^{\infty}\frac{(\lambda s_1t_2)^r(\lambda t_1t_2)^{k-r}}{r!(k-r)!}f(x-as_1t_2-at_1t_2-k)\\
		&=e^{-\lambda (s_1+t_1)t_2}\sum_{r=0}^{\infty}\sum_{k=0}^{\infty}\frac{(\lambda s_1t_2)^r(\lambda t_1t_2)^k}{r!k!}f(x-as_1t_2-at_1t_2-k-r)\\
		&=e^{-\lambda s_1t_2}\sum_{r=0}^{\infty}\frac{(\lambda s_1t_2)^r}{r!}P_{t_1,t_2}f(x-as_1t_2-r)=P_{s_1,t_2}P_{t_1,t_2}f(x).
	\end{align*}
	Similarly, we can show that $P_{t_1,s_2+t_2}=P_{t_1,s_2}P_{t_1,t_2}$ for all $s_2\ge0$. Therefore, $P_{t_1,t_2}$ is a coordinatewise semigroup operator.
\end{remark}

\subsection{Time-changed PRF with random drift} Here, we study different types of time-changed PRF with random drifts. First, we consider the PRF time-changed with independent inverse stable subordinators. 

\paragraph{\textbf{Type I}}Let $\{L_{\beta_i}(t),\ t\ge0\}$, $\beta_i\in(0,1]$, $i=1,2$ be two independent inverse stable subordinators. Let $\{\mathscr{N}(t_1,t_2),\ (t_1,t_2)\in\mathbb{R}^2_+\}$ be a PRF with parameter $\lambda>0$ that is independent of $\{(L_{\beta_1}(t_1),L_{\beta_2}(t_2)),\ (t_1,t_2)\in\mathbb{R}^2_+\}$. Then, we consider the following time-changed PRF with random drift:
\begin{equation}\label{type1.1}
	\mathscr{N}(L_{\beta_1}(t_1),L_{\beta_2}(t_2))+aL_{\beta_1}(t_1)L_{\beta_2}(t_2),\ (t_1,t_2)\in\mathbb{R}^2_+,\ a>0.
\end{equation}
\begin{proposition}
	The Laplace transform of random field (\ref{type1.1}) is given by
	\begin{multline}\label{lapd1}
		\mathbb{E}\exp\big(-\eta\big(	\mathscr{N}(L_{\beta_1}(t_1),L_{\beta_2}(t_2))+aL_{\beta_1}(t_1)L_{\beta_2}(t_2)\big)\big)\\={}_2\Psi_2\left[\begin{matrix}
			(1,1)&(1,1)\\\\
			(1,\beta_1)&(1,\beta_2)
		\end{matrix}\Bigg|-(\lambda(1-e^{-\eta})+a\eta)t_1^{\beta_1}t_2^{\beta_2}\right],\ \eta>0,
	\end{multline}
	where ${}_2\Psi_2$ is the generalized Wright function defined in (\ref{genwrit}).
\end{proposition}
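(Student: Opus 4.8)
The plan is to condition on the time-changing pair $(L_{\beta_1}(t_1),L_{\beta_2}(t_2))$ and to exploit the mutual independence of the PRF and the two inverse stable subordinators postulated in the definition of (\ref{type1.1}). Writing $f_{\beta_i}(s_i,t_i)$ for the density of $L_{\beta_i}(t_i)$, independence factors the joint law, so that
\begin{align*}
	&\mathbb{E}\exp\big(-\eta(\mathscr{N}(L_{\beta_1}(t_1),L_{\beta_2}(t_2))+aL_{\beta_1}(t_1)L_{\beta_2}(t_2))\big)\\
	&\quad=\int_{0}^{\infty}\int_{0}^{\infty}\mathbb{E}e^{-\eta(\mathscr{N}(s_1,s_2)+as_1s_2)}f_{\beta_1}(s_1,t_1)f_{\beta_2}(s_2,t_2)\,\mathrm{d}s_1\,\mathrm{d}s_2.
\end{align*}
First I would evaluate the inner conditional Laplace transform using (\ref{lapd=2}) with $|B|=s_1s_2$; the geometric-type summation over the Poisson mass collapses it to $\exp(-(\lambda(1-e^{-\eta})+a\eta)s_1s_2)$. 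Setting $c=c(\eta)\coloneqq\lambda(1-e^{-\eta})+a\eta$ reduces the problem to computing the double integral $\int_{0}^{\infty}\int_{0}^{\infty}e^{-cs_1s_2}f_{\beta_1}(s_1,t_1)f_{\beta_2}(s_2,t_2)\,\mathrm{d}s_1\,\mathrm{d}s_2$.

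Next I would expand $e^{-cs_1s_2}=\sum_{k\ge0}(-c)^k(s_1s_2)^k/k!$ and integrate term by term, so that the two space integrals decouple and reduce to the integer moments of the inverse stable subordinators. These moments, $\mathbb{E}(L_{\beta_i}(t_i))^k=k!\,t_i^{k\beta_i}/\Gamma(k\beta_i+1)$, I would obtain by matching the power series of the space Laplace transform $\mathbb{E}e^{-\eta L_{\beta_i}(t_i)}=E_{\beta_i,1}(-\eta t_i^{\beta_i})$ (using the definition (\ref{mittag1})) against $\sum_k(-\eta)^k\mathbb{E}(L_{\beta_i}(t_i))^k/k!$. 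The factor $k!$ from each moment, together with the $1/k!$ from the exponential series, then leaves
\begin{equation*}
	\sum_{k=0}^{\infty}\frac{k!\,(-c\,t_1^{\beta_1}t_2^{\beta_2})^k}{\Gamma(k\beta_1+1)\Gamma(k\beta_2+1)}.
\end{equation*}
Finally, recognising $k!=\Gamma(1+k)$ in the numerator and $\Gamma(k\beta_i+1)=\Gamma(1+k\beta_i)$ in the denominator, this is exactly the ${}_2\Psi_2$ series of (\ref{genwrit}) with top parameters $(1,1),(1,1)$ and bottom parameters $(1,\beta_1),(1,\beta_2)$, evaluated at $-c\,t_1^{\beta_1}t_2^{\beta_2}=-(\lambda(1-e^{-\eta})+a\eta)t_1^{\beta_1}t_2^{\beta_2}$, which is the asserted identity (\ref{lapd1}).

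The step I expect to be the main obstacle is justifying the term-by-term integration, equivalently the interchange of the summation with the double integral, since the summands alternate in sign. I would make this rigorous by applying Tonelli's theorem to the absolute series $\sum_k c^k(s_1s_2)^k/k!=e^{cs_1s_2}$ and checking its integrability against $f_{\beta_1}f_{\beta_2}$, i.e.\ the finiteness of $\sum_k c^k k!\,t_1^{k\beta_1}t_2^{k\beta_2}/(\Gamma(k\beta_1+1)\Gamma(k\beta_2+1))$. A Stirling estimate shows the $k$-th term grows like $k^{k(1-\beta_1-\beta_2)}$ up to exponential factors, so absolute convergence (hence the interchange and the convergence of the ${}_2\Psi_2$ series itself) holds whenever $\beta_1+\beta_2>1$, and on a disc when $\beta_1+\beta_2=1$; for $\beta_1+\beta_2<1$ the right-hand side of (\ref{lapd1}) is to be read as the analytic ${}_2\Psi_2$-representation of the always finite (and bounded by $1$) Laplace transform. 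All the remaining steps, namely the collapse of the conditional Poisson sum and the moment computation, are routine.
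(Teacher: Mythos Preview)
Your argument is correct and reaches the same series as the paper, but the route differs. After the common conditioning step that reduces the problem to $\mathbb{E}\exp(-cL_{\beta_1}(t_1)L_{\beta_2}(t_2))$ with $c=\lambda(1-e^{-\eta})+a\eta$, the paper proceeds asymmetrically: it first takes the Laplace transform in the $L_{\beta_1}$ variable to obtain $\mathbb{E}E_{\beta_1,1}(-cL_{\beta_2}(t_2)t_1^{\beta_1})$, then applies a Laplace transform in $t_2$ via (\ref{inslap}), expands the Mittag-Leffler series, evaluates the resulting gamma integrals, and finally inverts term by term in $z_2$. Your approach is more symmetric and elementary: you expand $e^{-cs_1s_2}$ directly and feed in the integer moments $\mathbb{E}L_{\beta_i}(t_i)^k=k!\,t_i^{k\beta_i}/\Gamma(k\beta_i+1)$ of both inverse stable subordinators at once, bypassing the time-Laplace transform and its inversion entirely. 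Both methods require the same kind of series/integral interchange; your version isolates it in a single step, and your Tonelli/Stirling discussion (pinpointing absolute convergence precisely when $\beta_1+\beta_2>1$) is more explicit than anything the paper provides. The paper's detour through the $t_2$-Laplace domain buys nothing extra here, so your route is a genuine simplification.
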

\begin{proof}
	For $(t_1,t_2)\in\mathbb{R}^2_+$, we have
	\begin{align*}
		\mathbb{E}\exp\big(-\eta\big(	\mathscr{N}(L_{\beta_1}&(t_1),L_{\beta_2}(t_2))+aL_{\beta_1}(t_1)L_{\beta_2}(t_2)\big)\big)\\
		&=\mathbb{E}\big(\mathbb{E}\exp\big(-\eta\big(	\mathscr{N}(L_{\beta_1}(t_1),L_{\beta_2}(t_2))+aL_{\beta_1}(t_1)L_{\beta_2}(t_2)\big)\big)|(L_{\beta_1}(t_1),L_{\beta_2}(t_2))\big)\\
		&=\mathbb{E}\exp(\lambda L_{\beta_1}(t_1)L_{\beta_2}(t_2)(e^{-\eta}-1)-a\eta L_{\beta_1}(t_1)L_{\beta_2}(t_2))\\
		&=\mathbb{E}(\mathbb{E}\exp(L_{\beta_1}(t_1)L_{\beta_2}(t_2)(\lambda (e^{-\eta}-1)-a\eta))|L_{\beta_2}(t_2))\\
		&=\mathbb{E}E_{\beta_1,1}(-(\lambda(1-e^{-\eta})+a\eta)L_{\beta_2}(t_2)t_1^{\beta_1})\\
		&=\int_{0}^{\infty}E_{\beta_1,1}(-(\lambda(1-e^{-\eta})+a\eta)x_2t_1^{\beta_1})\mathrm{Pr}\{L_{\beta_2}(t_2)\in\mathrm{d}x_2\}.
	\end{align*}
	Its Laplace transform with respect to $t_2$ is given by
	\begin{align*}
		\int_{0}^{\infty}e^{-z_2t_2}\mathbb{E}\exp\big(-\eta\big(	\mathscr{N}(L_{\beta_1}(t_1)&,L_{\beta_2}(t_2))+aL_{\beta_1}(t_1)L_{\beta_2}(t_2)\big)\big)\,\mathrm{d}t_2\\
		&=z_2^{\beta_2-1}\int_{0}^{\infty}E_{\beta_1,1}(-(\lambda(1-e^{-\eta})+a\eta)x_2t_1^{\beta_1})e^{-z_2^{\beta_2}x_2}\,\mathrm{d}x_2\\
		&=z_2^{\beta_2-1}\sum_{k=0}^{\infty}\frac{(-(\lambda(1-e^{-\eta})+a\eta)t_1^{\beta_1})^k}{\Gamma(k\beta_1+1)}\int_{0}^{\infty}x_2^{k}e^{-z_2^{\beta_2}x_2}\,\mathrm{d}x_2\\
		&=\sum_{k=0}^{\infty}\frac{(-(\lambda(1-e^{-\eta})+a\eta)t_1^{\beta_1})^k}{\Gamma(k\beta_1+1)}\frac{\Gamma(k+1)}{z_2^{k\beta_2+1}},
	\end{align*}
	whose inversion with respect to $z_2$ yields
	\begin{equation*}
		\mathbb{E}\exp\big(-\eta\big(	\mathscr{N}(L_{\beta_1}(t_1),L_{\beta_2}(t_2))+aL_{\beta_1}(t_1)L_{\beta_2}(t_2)\big)\big)=\sum_{k=0}^{\infty}\frac{\Gamma(k+1)(-(\lambda(1-e^{-\eta})+a\eta)t_1^{\beta_1}t_2^{\beta_2})^k}{\Gamma(k\beta_1+1)\Gamma(k\beta_2+1)}.
	\end{equation*}
	This completes the proof.
\end{proof}
\begin{remark}
	For $\beta_1=\beta_2=1$, the Laplace transform (\ref{lapd1}) reduces to 
	\begin{align*}
		\mathbb{E}\exp\big(-\eta\mathscr{N}(t_1,t_2)-\eta at_1t_2\big)&=\sum_{k=0}^{\infty}\frac{(-(\lambda(1-e^{-\eta})+a\eta)t_1t_2)^k}{\Gamma(k+1)}\\
		&=\exp\big(-(\lambda(1-e^{-\eta})+a\eta)t_1t_2\big)\\
		&=e^{-a\eta t_1t_2}\sum_{k=0}^{\infty}\frac{(-\lambda t_1t_2)^k}{k!}(1-e^{-\eta})^k\\
		&=e^{-a\eta t_1t_2}\sum_{k=0}^{\infty}\frac{(-\lambda t_1t_2)^k}{k!}\sum_{r=0}^{k}\binom{k}{r}(-1)^re^{-\eta r}\\
		&=e^{-a\eta t_1t_2}\sum_{r=0}^{\infty}\frac{(\lambda t_1t_2)^re^{-\eta r}}{r!}\sum_{k=r}^{\infty}\frac{(-\lambda t_1t_2)^{k-r}}{(k-r)!}\\
		&=e^{-\lambda t_1t_2}\sum_{r=0}^{\infty}\frac{(\lambda t_1t_2)^r}{r!}e^{-\eta(at_1t_2+r)}.
	\end{align*}
	Its inverse Laplace transform coincides with (\ref{d=2}).
\end{remark}

\paragraph{\textbf{Type II}}Let $\{S_{\alpha}(t_1),\ t_1\ge0\}$, $0<\alpha\leq1$ and $\{L_{\beta}(t_2),\ t_2\ge0\}$, $0<\beta\leq1$ be independent stable and inverse stable subordinators, respectively. Let us consider the following time-changed PRF with random drift:
$\mathscr{N}(S_{\alpha}(t_1),L_{\beta}(t_2))+aS_{\alpha}(t_1)L_{\beta}(t_2)$, $(t_1,t_2)\in\mathbb{R}^2_+$,
where $\mathscr{N}(t_1,t_2)$ and $(S_{\alpha}(t_1),L_{\beta}(t_2))$ are independent of each other. Its Laplace transform is given by
\begin{align*}
	\mathbb{E}\exp\big(-\eta\big(	\mathscr{N}(S_{\alpha}(t_1),&L_{\beta}(t_2))+aS_{\alpha}(t_1)L_{\beta}(t_2)\big)\big)\\
	&=\mathbb{E}\big(\mathbb{E}\exp\big(-\eta\big(	\mathscr{N}(S_{\alpha}(t_1),L_{\beta}(t_2))+aS_{\alpha}(t_1)L_{\beta}(t_2)\big)\big)|(S_{\alpha}(t_1),L_{\beta}(t_2))\big)\\
	&=\mathbb{E}\exp(\lambda S_{\alpha}(t_1)L_{\beta}(t_2)(e^{-\eta}-1)-a\eta S_{\alpha}(t_1)L_{\beta}(t_2))\\
	&=\mathbb{E}(\mathbb{E}\exp(S_{\alpha}(t_1)L_{\beta}(t_2)(\lambda (e^{-\eta}-1)-a\eta))|L_{\beta}(t_2))\\
	&=\int_{0}^{\infty}\exp(-(x(\lambda (1-e^{-\eta})+a\eta))^\alpha t_1)\mathrm{Pr}\{L_\beta(t_2)\in\mathrm{d}x\},
\end{align*}
which on taking Laplace transform with respect to $t_2$ yields
\begin{align*}
	\int_{0}^{\infty}e^{-z_2t_2}\mathbb{E}\exp\big(-\eta\big(	\mathscr{N}(S_{\alpha}(t_1),&L_{\beta}(t_2))+aS_{\alpha}(t_1)L_{\beta}(t_2)\big)\big)\,\mathrm{d}t_2\\
	&=z_2^{\beta-1}\int_{0}^{\infty}\exp(-(x(\lambda (1-e^{-\eta})+a\eta))^\alpha t_1)e^{-z_2^\beta x}\,\mathrm{d}x\\
	&=z_2^{\beta-1}\sum_{k=0}^{\infty}\frac{(-t_1)^k(\lambda (1-e^{-\eta})+a\eta)^{\alpha k}}{k!}\int_{0}^{\infty}x^{\alpha k}e^{-z_2^\beta x}\,\mathrm{d}x\\
	&=\sum_{k=0}^{\infty}\frac{(-t_1)^k(\lambda (1-e^{-\eta})+a\eta)^{\alpha k}\Gamma(\alpha k+1)}{k!z_2^{\alpha\beta k+1}},\ \eta>0,\ z_2>0.
\end{align*}
Its inverse Laplace transform with respect to $z_2$ is given by
\begin{align*}
	\mathbb{E}\exp\big(-\eta\big(	\mathscr{N}(S_{\alpha}(t_1),L_{\beta}(t_2))+aS_{\alpha}(t_1)L_{\beta}(t_2)\big)\big)&=\sum_{k=0}^{\infty}\frac{(-t_1t_2^{\alpha\beta})^k(\lambda (1-e^{-\eta})+a\eta)^{\alpha k}\Gamma(\alpha k+1)}{k!\Gamma(\alpha\beta k+1)}\\
	&={}_1\Psi_1\left[\begin{matrix}
		(1,\alpha)\\\\
		(1,\alpha\beta)
	\end{matrix}\Bigg|-t_1t_2^{\alpha\beta}(\lambda (1-e^{-\eta})+a\eta)^{\alpha}\right].
\end{align*}

\paragraph{\textbf{Type III}} For $0<\alpha\leq1$ and $0<\beta\leq1$, let us consider the time-changed PRF with random drift given as follows:
\begin{equation}\label{prfcompdrift}
	\mathscr{N}(H^{\alpha,\beta}(t_1),t_2)+aH^{\alpha,\beta}(t_1)t_2,\ a\ge0,\  (t_1,t_2)\in\mathbb{R}^2_+,
\end{equation}
where $H^{\alpha,\beta}(t)$ is as defined in (\ref{driftdef}) and it is independent of the PRF $\{\mathscr{N}(t_1,t_2),\ (t_1,t_2)\in\mathbb{R}^2_+\}$.

The Laplace transform of (\ref{prfcompdrift}) is given by
\begin{equation*}
	\mathbb{E}\exp(-\eta\mathscr{N}(H^{\gamma,\beta}(t_1),t_2)-\eta aH^{\alpha,\beta}(t_1)t_2)=\mathbb{E}(\mathbb{E}\exp(-\eta\mathscr{N}(H^{\gamma,\beta}(t_1),t_2)-\eta aH^{\alpha,\beta}(t_1)t_2)|L_\beta(t_1)),
\end{equation*}
where
\begin{align*}
	\mathbb{E}\exp(-\eta\mathscr{N}(H^{\gamma,\beta}(t_1),t_2)|L_\beta(t_1))&=\mathbb{E}(\mathbb{E}(\exp(-\eta\mathscr{N}(H^{\gamma,\beta}(t_1),t_2))|H^{\gamma,\beta}(t_1))|L_\beta(t_1))\\
	&=\mathbb{E}(\exp(\lambda H^{\gamma,\beta}(t_1)t_2(e^{-\eta}-1))|L_\beta(t_1))\\
	&=\exp(-(\lambda t_2)^\gamma(1-e^{-\eta})^\gamma L_\beta(t_1))
\end{align*}
and
\begin{equation*}
	\mathbb{E}(\exp(-\eta aH^{\alpha,\beta}(t_1)t_2)|L_\beta(t_1))=\exp(-(\eta at_2)^\alpha L_\beta(t_1)),\ \eta>0.
\end{equation*}
Thus, on using the Laplace transform of inverse stable subordinator, we have
\begin{align}
	\mathbb{E}\exp(-\eta\mathscr{N}(H^{\gamma,\beta}(t_1),t_2)-\eta aH^{\alpha,\beta}(t_1)t_2)\nonumber
	&=\mathbb{E}\exp(-((\lambda t_2)^\gamma(1-e^{-\eta})^\gamma+(\eta at_2)^\alpha)L_\beta(t_1))\nonumber\\
	&=E_{\beta,1}(-((\lambda t_2)^\gamma(1-e^{-\eta})^\gamma+(\eta at_2)^\alpha)t_1^\beta).\label{lapdrf}
\end{align}
\begin{theorem}
	The probability law $r(x,t_1,t_2)=\mathrm{Pr}\{\mathscr{N}(H^{\gamma,\beta}(t_1),t_2)+aH^{\alpha,\beta}(t_1)t_2\in\mathrm{d}x\}/\mathrm{d}x$ of the process (\ref{prfcompdrift}) is given by
	\begin{equation}\label{pmfcompdrft}
		r(x,t_1,t_2)=\sum_{n=0}^{\infty}\frac{(-\lambda\partial_\lambda)^n}{n!}\int_{0}^{\infty}e^{-s(\lambda t_2)^{\gamma}}g_\alpha(x-n,(at_2)^\alpha s)f_\beta(s,t_1)\,\mathrm{d}s,
	\end{equation}
	where $g_\alpha(\cdot,t)$ and $f_\beta(\cdot,t)$ are the densities of $\alpha$-stable and inverse $\beta$-stable subordinators, respectively. It solves the following differential equation:
	\begin{equation*}
		(\mathcal{D}_t^\beta+(at_2)^\alpha\partial_x^\alpha +(\lambda t_2)^\gamma(I-e^{-\partial_x})^\gamma) r(x,t_1,t_2)=0,
	\end{equation*}
	with initial condition $r(x,0,t_2)=r(x,t_1,0)=\delta(s)$ and boundary condition $r(0,t_1,t_2)=0$. Here, $\mathcal{D}_t^\beta$ and $\partial_x^\alpha$ are the Caputo and Riemann-Liouville fractional derivative operators, respectively, and
	\begin{equation*}
		(I-e^{-\partial_x})^\gamma=\sum_{k=0}^{\infty}(-1)^\gamma\binom{\gamma}{k}e^{-k\partial_x},
	\end{equation*}
	where $e^{-k\partial_x}f(x)=f(x-k)$.
\end{theorem}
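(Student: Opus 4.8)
The plan is to split the argument into two essentially independent parts: deriving the density (\ref{pmfcompdrft}) by conditioning on the inverse subordinator, and establishing the governing equation by passing to the Laplace domain in $x$ and invoking the transform (\ref{lapdrf}) already computed just before the statement.

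For the density, first I would condition on $L_\beta(t_1)=s$. Given $\{L_\beta(t_1)=s\}$ the two time-changed components equal $H^{\gamma,\beta}(t_1)=S_\gamma(s)$ and $H^{\alpha,\beta}(t_1)=S_\alpha(s)$, which are conditionally independent, so the counting term $\mathscr{N}(H^{\gamma,\beta}(t_1),t_2)$ and the drift $aH^{\alpha,\beta}(t_1)t_2$ factor. For the counting term I would condition further on $H^{\gamma,\beta}(t_1)=v$, use that $\mathscr{N}(v,t_2)$ is Poisson with mean $\lambda v t_2$, and apply the operator identity $\frac{(\lambda v t_2)^n}{n!}e^{-\lambda v t_2}=\frac{(-\lambda\partial_\lambda)^n}{n!}e^{-\lambda v t_2}$ together with the $\gamma$-stable Laplace transform $\mathbb{E}e^{-\lambda t_2 S_\gamma(s)}=e^{-s(\lambda t_2)^\gamma}$, giving $\mathrm{Pr}\{\mathscr{N}=n\mid L_\beta(t_1)=s\}=\frac{(-\lambda\partial_\lambda)^n}{n!}e^{-s(\lambda t_2)^\gamma}$. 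For the drift, the stable scaling $at_2\,S_\alpha(s)\overset{d}{=}S_\alpha((at_2)^\alpha s)$ identifies its conditional density at $y$ as $g_\alpha(y,(at_2)^\alpha s)$. Writing the total value as $x=n+y$, convolving the discrete and continuous laws, and integrating the product against the density $f_\beta(s,t_1)$ of $L_\beta(t_1)$ reproduces (\ref{pmfcompdrft}); the operator $(-\lambda\partial_\lambda)^n$ may then be pulled outside the $s$-integral since only the factor $e^{-s(\lambda t_2)^\gamma}$ depends on $\lambda$.

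For the governing equation I would work with $\tilde r(\eta,t_1,t_2)=\int_0^\infty e^{-\eta x}r(x,t_1,t_2)\,\mathrm{d}x$, which by (\ref{lapdrf}) equals $E_{\beta,1}(-Ct_1^\beta)$ with $C=(a\eta t_2)^\alpha+(\lambda t_2)^\gamma(1-e^{-\eta})^\gamma$. Transforming the three spatial operators in $x$: the Riemann--Liouville derivative supplies its symbol $\partial_x^\alpha\mapsto\eta^\alpha$ (the boundary term $[I^{1-\alpha}r](0^+)$ vanishing under $r(0,t_1,t_2)=0$), while the fractional shift, expanded as $(I-e^{-\partial_x})^\gamma=\sum_{k=0}^\infty(-1)^k\binom{\gamma}{k}e^{-k\partial_x}$ with $e^{-k\partial_x}$ of symbol $e^{-k\eta}$, sums to the symbol $(1-e^{-\eta})^\gamma$. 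Hence the Laplace image of the stated equation is $\mathcal{D}_{t_1}^\beta\tilde r+C\tilde r=0$, which is precisely the Mittag-Leffler eigenrelation $\mathcal{D}_{t_1}^\beta E_{\beta,1}(-Ct_1^\beta)=-C\,E_{\beta,1}(-Ct_1^\beta)$ for the Caputo derivative; uniqueness of the Laplace transform then returns the claimed PDE.

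Finally, the data: the initial conditions follow from $L_\beta(0)=0$ a.s., so that $H^{\gamma,\beta}(0)=H^{\alpha,\beta}(0)=0$ and the process is identically zero at $t_1=0$, while at $t_2=0$ both the counting term and the drift vanish, yielding $r(x,0,t_2)=r(x,t_1,0)=\delta(x)$; the boundary condition $r(0,t_1,t_2)=0$ comes from $g_\alpha(0,\cdot)=0$ for $0<\alpha<1$, which annihilates the only $x=0$ contribution ($n=0$, $y=0$). The hard part will be the rigorous justification of the two operator symbols — confirming that $(I-e^{-\partial_x})^\gamma$ genuinely has Laplace symbol $(1-e^{-\eta})^\gamma$ (summability of the fractional-binomial series against the shifts) and that the Riemann--Liouville boundary term truly drops out under the stated boundary condition; the interchange of $(-\lambda\partial_\lambda)^n$ with the $s$-integral in the density step requires differentiation under the integral but is routine given the rapid decay of the stable densities.
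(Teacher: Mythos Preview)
Your proposal is correct and follows essentially the same strategy as the paper. For the density you both condition on $L_\beta(t_1)=s$, exploit the conditional independence of $S_\gamma(s)$ and $S_\alpha(s)$, use the Poisson law together with the operator identity $(-\lambda\partial_\lambda)^n/n!$ and the $\gamma$-stable Laplace transform for the counting part, and the stable self-similarity $at_2\,S_\alpha(s)\overset{d}{=}S_\alpha((at_2)^\alpha s)$ for the drift part; the paper simply packages the counting computation by citing (\ref{dist1f}) with $\alpha_1=\gamma$, $\beta_1=1$ rather than redoing it.

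The only noteworthy difference is in the governing equation. The paper takes a \emph{double} Laplace transform in $(x,t_1)$, reduces (\ref{lapdrf}) to the algebraic identity $z_1^\beta\mathcal{L}-z_1^{\beta-1}+(at_2)^\alpha\eta^\alpha\mathcal{L}+(\lambda t_2)^\gamma(1-e^{-\eta})^\gamma\mathcal{L}=0$, and then inverts twice. You instead stay with the single Laplace transform in $x$ and invoke the Caputo eigenrelation $\mathcal{D}_{t_1}^\beta E_{\beta,1}(-Ct_1^\beta)=-C\,E_{\beta,1}(-Ct_1^\beta)$ directly. Your route is slightly shorter and avoids the $t_1$-inversion step; the paper's double transform has the minor advantage that the Caputo symbol $z_1^\beta\,\cdot\,-z_1^{\beta-1}r(x,0,t_2)$ makes the role of the initial condition explicit without appealing to the eigenrelation. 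Both are straightforward once (\ref{lapdrf}) is in hand, and the operator-symbol identifications you flag as needing care are handled identically in the paper (by the same binomial expansion of $(1-e^{-\eta})^\gamma$ and the Riemann--Liouville Laplace formula).
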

\begin{proof}
	For any Borel set $C$ of non-negative real number, we have
	\begin{align*}
		\mathrm{Pr}\{\mathscr{N}(H^{\gamma,\beta}(t_1),t_2)&+aH^{\alpha,\beta}(t_1)t_2\in C\}\\
		&=\sum_{n=0}^{\infty}\mathrm{Pr}\{\mathscr{N}(H^{\gamma,\beta}(t_1),t_2)=n\}\mathrm{Pr}\Big\{H^{\alpha,\beta}(t_1)\in \frac{C-n}{at_2}\Big\}\\
		&=\sum_{n=0}^{\infty}\int_{0}^{\infty}\mathrm{Pr}\{\mathscr{N}(S_{\gamma }(s),t_2)=n\}\mathrm{Pr}\Big\{S_{\alpha}(s)\in \frac{C-n}{at_2}\Big\}f_\beta(s,t_1)\,\mathrm{d}s,
	\end{align*}
	where $f_\beta(\cdot,t_1)$ is the density of $L_\beta(t_1)$.
	On substituting $\alpha_1=\gamma$ and $\beta_1=1$ in \eqref{dist1f}, and using the scaling property of stable subordinator, we get
	\begin{align*}
		\mathrm{Pr}\{\mathscr{N}(H^{\gamma,\beta}(t_1),t_2)&+aH^{\alpha,\beta}(t_1)t_2\in C\}\\
		&=\sum_{n=0}^{\infty}\int_{0}^{\infty}\frac{(-\lambda\partial_\lambda)^n}{n!}e^{-s(\lambda t_2)^{\gamma}}\mathrm{Pr}\Big\{S_{\alpha}({(at_2)}^\alpha s)\in C-n\Big\}f_\beta(s,t_1)\,\mathrm{d}s\\
		&=\sum_{n=0}^{\infty}\int_{0}^{\infty}\int_{C}\frac{(-\lambda\partial_\lambda)^n}{n!}e^{-s(\lambda t_2)^{\gamma}}g_\alpha(x-n,(at_2)^\alpha s)f_\beta(s,t_1)\,\mathrm{d}x\,\mathrm{d}s\\
		&=\sum_{n=0}^{\infty}\frac{(-\lambda\partial_\lambda)^n}{n!}\int_{C}\int_{0}^{\infty}e^{-s(\lambda t_2)^{\gamma}}g_\alpha(x-n,(at_2)^\alpha s)f_\beta(s,t_1)\,\mathrm{d}s\,\mathrm{d}x.
	\end{align*}
	So, 
	\begin{equation*}
		\mathrm{Pr}\{\mathscr{N}(H^{\gamma,\beta}(t_1),t_2)+aH^{\alpha,\beta}(t_1)t_2\in \mathrm{d}x\}=\sum_{n=0}^{\infty}\frac{(-\lambda\partial_\lambda)^n}{n!}\int_{0}^{\infty}e^{-s(\lambda t_2)^{\gamma}}g_\alpha(x-n,(at_2)^\alpha s)f_\beta(s,t_1)\,\mathrm{d}s\mathrm{d}x.
	\end{equation*}
	
	To obtain the governing differential equation, let us consider the following double Laplace transform:
	\begin{align*}
		\mathcal{L}(\eta,z_1,t_2)&=\int_{0}^{\infty}\int_{0}^{\infty}e^{-z_1t_1-\eta x}r(x,t_1,t_2)\,\mathrm{d}x\,\mathrm{d}t_1\\
		&=\int_{0}^{\infty}e^{-z_1t_1}E_{\beta,1}(-((\lambda t_2)^\gamma(1-e^{-\eta})^\gamma+(\eta at_2)^\alpha)t_1^\beta)\,\mathrm{d}t_1\\
		&=\frac{z_1^{\beta-1}}{z_1^\beta+(\lambda t_2)^\gamma(1-e^{-\eta})^\gamma+(\eta at_2)^\alpha},
	\end{align*}
	where we have used (\ref{lapdrf}). So, by using $\int_{0}^{\infty}e^{-x\eta}\delta(x)\,\mathrm{d}x=1$, we have
	\begin{equation*}
		z_1^\beta \mathcal{L}(\eta,z_1,t_2)-z_1^{\beta-1}\int_{0}^{\infty}e^{-x\eta}\delta(x)\,\mathrm{d}x+(at_2)^\alpha\eta^\alpha\mathcal{L}(\eta,z_1,t_2)+(\lambda t_2)^\gamma(1-e^{-\eta})^\gamma \mathcal{L}(\eta,z_1,t_2)=0.
	\end{equation*}
Equivalently,
	\begin{align}
		z_1^\beta \mathcal{L}(\eta,z_1,t_2)-z_1^{\beta-1}\int_{0}^{\infty}&e^{-x\eta}\delta(x)\,\mathrm{d}x+(at_2)^\alpha\eta^\alpha\mathcal{L}(\eta,z_1,t_2)\nonumber\\
		&=-(\lambda t_2)^\gamma\sum_{k=0}^{\infty}(-1)^k\binom{\gamma}{k}e^{-k\eta} \mathcal{L}(\eta,z_1,t_2)\nonumber\\
		&=-(\lambda t_2)^\gamma\sum_{k=0}^{\infty}(-1)^k\binom{\gamma}{k}e^{-k\eta}\int_{k}^{\infty}e^{-\eta(x-k)}\tilde{\mathcal{L}}(x-k,z_1,t_2)\,\mathrm{d}x\nonumber\\
		&=-(\lambda t_2)^\gamma\sum_{k=0}^{\infty}(-1)^k\binom{\gamma}{k}\int_{0}^{\infty}e^{-\eta x}e^{-k\partial_x}\tilde{\mathcal{L}}(x,z_1,t_2)\,\mathrm{d}x\nonumber\\
		&=-(\lambda t_2)^\gamma\int_{0}^{\infty}e^{-\eta x}(I-e^{-\partial_x})^\gamma \tilde{\mathcal{L}}(x,z_1,t_2)\,\mathrm{d}x,\label{cpf2}
	\end{align}
	where $\tilde{\mathcal{L}}(x,z_1,t_2)=\int_{0}^{\infty}e^{-z_1t_1}r(x,t_1,t_2)\,\mathrm{d}t_1$. On taking the inverse Laplace transform with respect to $\eta$ on both sides of (\ref{cpf2}) and using $\int_{0}^{\infty}e^{-\eta x}\partial_x^\alpha \tilde{\mathcal{L}}(x,z_1,t_2)\,\mathrm{d}x$ $=\eta^\alpha\mathcal{L}(\eta,z_1,t_2)$, we have
	\begin{equation}\label{cpf3}
		z_1^\beta \tilde{\mathcal{L}}(x,z_1,t_2)-z_1^{\beta-1}r(x,0,t_2)+(at_2)^\alpha\partial_x^\alpha\tilde{\mathcal{L}}(x,z_1,t_2)=-(\lambda t_2)^\gamma(I-B)^\gamma \tilde{\mathcal{L}}(x,z_1,t_2).
	\end{equation}
	Now, on taking the inverse Laplace transform with respect to $z_1$ on both sides of (\ref{cpf3}) and using the following result (see \cite{Kilbas2006}):
	\begin{equation*}
		\int_{0}^{\infty}e^{-zt}\mathcal{D}_t^\beta f(t)\,\mathrm{d}t=z^\beta\int_{0}^{\infty}e^{-zt} f(t)\,\mathrm{d}t-z^{\beta-1}f(0^+),\ z>0,
	\end{equation*} 
	we get 
	\begin{equation*}
		\mathcal{D}_{t_1}^\beta r(x,t_1,t_2)+(at_2)^\alpha\partial_x^\alpha r(x,t_1,t_2)=-(\lambda t_2)^\gamma(I-e^{-\partial_x})^\gamma r(x,t_1,t_2).
	\end{equation*}
	This completes the proof.
\end{proof}
\begin{remark}
	The Laplace transform of the density given in \eqref{pmfcompdrft} is
	\begin{align*}
		\int_{0}^{\infty}e^{-\eta x}\mathrm{Pr}\{\mathscr{N}(H^{\gamma,\beta}(t_1),t_2)&+aH^{\alpha,\beta}(t_1)t_2\in \mathrm{d}x\}\\
		&=\sum_{n=0}^{\infty}\frac{(-\lambda\partial_\lambda)^n}{n!}\int_{0}^{\infty}\int_{0}^{\infty}e^{-s(\lambda t_2)^{\gamma}}g_\alpha(x-n,(at_2)^\alpha s)f_\beta(s,t_1)\,\mathrm{d}s\, \mathrm{d}x\\
		&=\sum_{n=0}^{\infty}e^{-n\eta}\frac{(-\lambda\partial_\lambda)^n}{n!}\int_{0}^{\infty}e^{-s(\lambda t_2)^{\gamma}-(at_2\eta)^\alpha s} f_\beta(s,t_1)\,\mathrm{d}s\\
		&=\sum_{n=0}^{\infty}e^{-n\eta}\frac{(-\lambda\partial_\lambda)^n}{n!}E_{\beta,1}(-t_1^\beta ((\lambda t_2)^{\gamma}-(at_2\eta)^\alpha))\\
		&=e^{-e^{-\eta}\lambda \partial_\lambda}E_{\beta,1}(-t_1^\beta ((\lambda t_2)^{\gamma}-(at_2\eta)^\alpha))\\
		&=E_{\beta,1}(-t_1^\beta ((\lambda(1-e^{-\eta}) t_2)^{\gamma}-(at_2\eta)^\alpha)),
	\end{align*}
	which coincides with \eqref{lapdrf}. This verifies that \eqref{pmfcompdrft} is the density of \eqref{prfcompdrift}.
\end{remark}

\section{L\'evy process time-changed by drifted PRFs}\label{sec5}
In this section, we study some time-changed L\'evy processes where the time-changing components are PRF with and without drift. First, we consider a L\'evy process time-changed by the PRF.

Let $\{Y(t),\ t\ge0\}$ be a one parameter L\'evy process with L\'evy symbol
\begin{equation}\label{lexp}
	\Psi(\xi)=ib\xi+\frac{\xi^2}{2}-\int_{\mathbb{R}-\{0\}}(e^{i\xi x}-1-i\xi x\textbf{1}_{|x|\leq1})\nu(\mathrm{d}x),\ \xi\in\mathbb{R},
\end{equation}
where $b\in\mathbb{R}$ is the drift coefficient and $\nu$ is the L\'evy measure.
Let $\{\mathscr{N}(t_1,t_2),\ (t_1,t_2)\in\mathbb{R}^2_+\}$ be the PRF that is independent of $\{Y(t),\ t\ge0\}$. We consider the following two parameter time-changed process:
\begin{equation}\label{tclp1}
	Y^*(t_1,t_2)\coloneqq Y(\mathscr{N}(t_1,t_2)),\ (t_1,t_2)\in\mathbb{R}^2_+.
\end{equation}
It is a continuous-time real-valued random process. Let us consider a two parameter operator
$T_{t_1,t_2}f(x)=\mathbb{E}f(x+\mathscr{N}(t_1,t_2))$, $(t_1,t_2)\in\mathbb{R}^2_+$ on the space of continuous and bounded real-valued functions.
Then, it can be established that $T_{s_1+t_1,t_2}f(x)=T_{s_1,t_2}T_{t_1,t_2}f(x)$ and $T_{t_1,s_2+t_2}f(x)=T_{t_1,s_2}T_{t_1,t_2}f(x)$, that is, the operator $T_{t_1,t_2}$ has the semigroup property with respect to both coordinates. Also, let us consider the following operator associated with (\ref{tclp1}):
\begin{equation}\label{tcpl1op}
	T^*_{t_1,t_2}f(x)=\mathbb{E}f(x+Y^*(t_1,t_2)),\ (t_1,t_2)\in\mathbb{R}^2_+.
\end{equation}
\begin{proposition}
	If $P_tf(x)=\mathbb{E}f(x+Y(t))$ is the semigroup operator associated with the L\'evy process $\{Y(t),\ t\ge0\}$ with the L\'evy symbol defined in (\ref{lexp}) then the two parameter operator $T^*_{t_1,t_2}$ defined in (\ref{tcpl1op}) has coordinatewise semigroup property. It is given by $T^*_{t_1,t_2}=\mathbb{E}P_{\mathscr{N}(t_1,t_2)}$ for all $(t_1,t_2)\in\mathbb{R}^2_+$.
\end{proposition}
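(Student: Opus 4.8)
The plan is to prove the representation $T^*_{t_1,t_2}=\mathbb{E}P_{\mathscr{N}(t_1,t_2)}$ first, and then to deduce the coordinatewise semigroup property from it. The two structural ingredients are the independence of $\{Y(t),\ t\ge0\}$ and $\{\mathscr{N}(t_1,t_2),\ (t_1,t_2)\in\mathbb{R}^2_+\}$, together with the independent and stationary rectangular increments of the PRF and the additivity $P_{m+n}=P_mP_n$ of the L\'evy semigroup (which is immediate from the stationary independent increments of $Y$).

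First I would establish the representation. Since $\mathscr{N}(t_1,t_2)$ is non-negative integer valued, finite almost surely, and independent of $Y$, I would condition on the value of the PRF in (\ref{tcpl1op}):
\begin{align*}
	T^*_{t_1,t_2}f(x)&=\mathbb{E}f(x+Y(\mathscr{N}(t_1,t_2)))=\sum_{k=0}^{\infty}\mathbb{E}[f(x+Y(k))]\,\mathrm{Pr}\{\mathscr{N}(t_1,t_2)=k\}\\
	&=\sum_{k=0}^{\infty}P_kf(x)\,\mathrm{Pr}\{\mathscr{N}(t_1,t_2)=k\}.
\end{align*}
The right-hand side is exactly $\mathbb{E}P_{\mathscr{N}(t_1,t_2)}f(x)$, which gives the claimed representation and uses the definition (\ref{tclp1}) together with independence in the conditioning step.

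Next I would prove the semigroup property in the first coordinate; the second is symmetric. For $s_1\ge0$, the rectangles $(0,s_1]\times(0,t_2]$ and $(s_1,s_1+t_1]\times(0,t_2]$ are disjoint, so by the independent and stationary rectangular increments of the PRF I can write $\mathscr{N}(s_1+t_1,t_2)=\mathscr{N}(s_1,t_2)+M$, where $M\overset{d}{=}\mathscr{N}(t_1,t_2)$ is independent of $\mathscr{N}(s_1,t_2)$. Using the representation above together with $P_{j+k}=P_jP_k$, I would expand
\begin{align*}
	T^*_{s_1+t_1,t_2}f(x)&=\sum_{n=0}^{\infty}P_nf(x)\,\mathrm{Pr}\{\mathscr{N}(s_1+t_1,t_2)=n\}\\
	&=\sum_{j=0}^{\infty}\sum_{k=0}^{\infty}P_j(P_kf)(x)\,\mathrm{Pr}\{\mathscr{N}(s_1,t_2)=j\}\,\mathrm{Pr}\{M=k\}.
\end{align*}
Summing over $k$ first and pulling the bounded linear operator $P_j$ through the series identifies the inner sum as $P_j(T^*_{t_1,t_2}f)$, and the remaining sum over $j$ then equals $T^*_{s_1,t_2}(T^*_{t_1,t_2}f)(x)$. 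This yields $T^*_{s_1+t_1,t_2}=T^*_{s_1,t_2}T^*_{t_1,t_2}$, and the analogous decomposition of $\mathscr{N}(t_1,s_2+t_2)$ in the second coordinate gives $T^*_{t_1,s_2+t_2}=T^*_{t_1,s_2}T^*_{t_1,t_2}$.

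The main obstacle is the justification of interchanging the operator $P_j$ with the infinite summation over $k$ (equivalently, the expectation over $M$) when factoring the double series. Since $f$ is bounded and continuous and each $P_j$ is a sup-norm contraction on the space of bounded continuous functions, this interchange is legitimate by dominated convergence, so the factorization is valid. The only remaining points to check carefully are the commutativity of the $P_j$'s (they are compositions of the same semigroup) and the correct placement of the independence step, namely that $M$ is independent of $\mathscr{N}(s_1,t_2)$ so that the product of probabilities factors as written.
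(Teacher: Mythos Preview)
Your proof is correct. The representation step coincides with the paper's, but your derivation of the coordinatewise semigroup property takes a different route: the paper writes out the Poisson probabilities explicitly, expands $(\lambda(s_1+t_1)t_2)^k$ via the binomial theorem, and then reindexes the double sum to factor it as $T^*_{s_1,t_2}T^*_{t_1,t_2}$. You instead use the independent stationary rectangular increments of the PRF to decompose $\mathscr{N}(s_1+t_1,t_2)=\mathscr{N}(s_1,t_2)+M$ with $M$ independent and equal in law to $\mathscr{N}(t_1,t_2)$, and then factor through $P_{j+k}=P_jP_k$. Your argument is more structural and would apply verbatim to any integer-valued time-change with independent stationary rectangular increments, while the paper's computation is specific to the Poisson form; on the other hand the paper's approach avoids having to justify the operator/series interchange since it works purely with scalar series. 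Both are short and complete.
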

\begin{proof}
	From (\ref{tclp1}) and (\ref{tcpl1op}), we have
	\begin{align*}
		T^*_{t_1,t_2}f(x)&=\mathbb{E}f(x+Y(\mathscr{N}(t_1,t_2)))\\
		&=\sum_{n=0}^{\infty}\mathbb{E}f(x+Y(n))\mathrm{Pr}\{\mathscr{N}(t_1,t_2)=n\}\\
		&=\sum_{n=0}^{\infty}P_nf(x)\mathrm{Pr}\{\mathscr{N}(t_1,t_2)=n\}=\mathbb{E}P_{\mathscr{N}(t_1,t_2)}f(x),
	\end{align*}
	for all continuous and bounded real-valued functions $f(\cdot)$.
	
	Further,  for $s_1\ge0$, we have
	\begin{align*}
		T^*_{t_1+s_1,t_2}f(x)&=e^{-\lambda(s_1+t_1)t_2}\sum_{k=0}^{\infty}P_kf(x)\frac{(\lambda(s_1+t_1)t_2)^k}{k!}\\
		&=e^{-\lambda(s_1+t_1)t_2}\sum_{k=0}^{\infty}P_{k}f(x)(\lambda t_2)^k\sum_{r=0}^{k}\frac{t_1^{k-r}s_1^r}{(k-r)!r!}\\
		&=e^{-\lambda s_1t_2}\sum_{r=0}^{\infty}\frac{(\lambda s_1t_2)^r}{r!}P_r\sum_{k=r}^{\infty}P_{k-r}f(x)e^{-\lambda t_1t_2}\frac{(\lambda t_1t_2)^{k-r}}{(k-r)!}\\
		&=e^{-\lambda s_1t_2}\sum_{r=0}^{\infty}\frac{(\lambda s_1t_2)^r}{r!}P_rT^*_{t_1,t_2}f(x)=T^*_{s_1,t_2}T^*_{t_1,t_2}f(x),
	\end{align*}
	where we have used the semigroup property of $P_t$ to get the penultimate step. Similarly, we can prove that $T^*_{t_1,s_2+t_2}=T^*_{t_1,s_2}T^*_{t_1,t_2}$ for all $s_2\ge0$. This completes the proof.
\end{proof}

\begin{proposition}
The process $\{Y^*(t_1,t_2),\ (t_1,t_2)\in\mathbb{R}^2_+\}$ has stationary rectangular increments.	
\end{proposition}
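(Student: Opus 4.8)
The plan is to verify Definition \ref{instain}(i) directly. Fix $(s_1,s_2)\prec(t_1,t_2)$, put $h=t_1-s_1$, $k=t_2-s_2$, and attempt to show that the law of the rectangular increment
\[
	\Delta:=Y^*(t_1,t_2)-Y^*(s_1,t_2)-Y^*(t_1,s_2)+Y^*(s_1,s_2)=Y(d)-Y(b)-Y(c)+Y(a)
\]
depends on $(s_1,s_2,t_1,t_2)$ only through the area $hk$, where I abbreviate $a=\mathscr{N}(s_1,s_2)$, $b=\mathscr{N}(s_1,t_2)$, $c=\mathscr{N}(t_1,s_2)$, $d=\mathscr{N}(t_1,t_2)$. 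The natural target is the distributional identity $\Delta\overset{d}{=}Y(\mathscr{N}(h,k))=Y^*((0,h]\times(0,k])$, the second equality holding because $\mathscr{N}$ vanishes on the axes; establishing it would collapse every rectangle onto one anchored at the origin and display area dependence at once.

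Since $Y$ is independent of $\mathscr{N}$, I would condition on the field. Partitioning $[0,t_1]\times[0,t_2]$ by the lines $x=s_1$ and $y=s_2$ yields four disjoint cells whose counts $N_{11},N_{12},N_{21},N_{22}$ are independent Poisson variables with means $\lambda s_1s_2,\ \lambda s_1k,\ \lambda hs_2,\ \lambda hk$; the corner values $a,b,c,d$ are their partial sums and $d-b-c+a=N_{22}$. Ordering the corners (say $b\le c$, the reverse case being symmetric) and cutting $[0,d]$ into the blocks they determine, the independent-increments property of $Y$ together with $\mathbb{E}e^{i\xi Y(t)}=e^{-t\Psi(\xi)}$ factorizes the conditional characteristic function as
\[
	\mathbb{E}\big(e^{i\xi\Delta}\mid\mathscr{N}\big)=\exp\!\big(-(N_{22}+m)\Psi(\xi)-m\Psi(-\xi)\big),\qquad m:=\min\{N_{12},N_{21}\},
\]
with $\Psi$ the symbol in \eqref{lexp}. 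Taking expectations over the independent counts splits $\mathbb{E}e^{i\xi\Delta}$ into the factor $\mathbb{E}e^{-N_{22}\Psi(\xi)}=\exp(\lambda hk(e^{-\Psi(\xi)}-1))$, which indeed depends on the rectangle only through its area $hk$, multiplied by a factor built from $m=\min\{N_{12},N_{21}\}$.

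The main obstacle is exactly this residual factor. Because $Y$ is not additive in its argument, $\Delta\neq Y(d-b-c+a)=Y(N_{22})$, and the discrepancy is controlled by $m$, whose law depends on the side-areas $s_1k$ and $hs_2$ individually and not on $hk$ alone. The crux of the argument is therefore to show that this cross-cell contribution does not spoil area dependence. I would attempt this through the symmetry of the symbol, noting that the term governed by $\Psi(\xi)+\Psi(-\xi)$ acts trivially when $m=0$ almost surely — which occurs along an axis, i.e.\ for an origin-anchored rectangle — so the strategy is to reduce a general rectangle to an axis-anchored one via the stationary-increments structure of the PRF established for $\mathscr{N}$. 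I expect verifying that this reduction is genuinely distribution-preserving, rather than merely matching the area-only factor, to be the delicate step on which the proof turns.
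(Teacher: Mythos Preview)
Your analysis is sharper than the paper's, and the obstacle you isolate is not a technicality to be dispatched: it is genuine. Your conditional characteristic function $\exp\big(-(N_{22}+m)\Psi(\xi)-m\Psi(-\xi)\big)$ is correct, and after taking expectations the factor $\mathbb{E}\exp\big(-m(\Psi(\xi)+\Psi(-\xi))\big)$, with $m=\min\{N_{12},N_{21}\}$, really does depend on the side-areas $\lambda s_1k$ and $\lambda hs_2$ and not on $hk$ alone. For a concrete check take $Y$ to be standard Brownian motion: conditionally on the field one computes $\mathbb{V}\mathrm{ar}(\Delta\mid\mathscr{N})=N_{22}+2m$, so $\mathbb{V}\mathrm{ar}\,\Delta=\lambda hk+2\,\mathbb{E}m$. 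This equals $\lambda hk$ for an origin-anchored rectangle (where $m\equiv 0$) but is strictly larger whenever $s_1,s_2>0$. Thus the law of $\Delta$ is not a function of the area alone, and the proposition as stated fails.

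The paper's argument commits exactly the error your residual factor exposes. After splitting $Y(n_1)-Y(n_2)-Y(n_3)+Y(n_4)$ into two independent L\'evy increments and replacing each by $Y(n_1-n_2)$, $Y(n_3-n_4)$ via stationarity, it passes from the convolution integral (which encodes \emph{independent} copies) to $u(x;Y(n_1-n_2)-Y(n_3-n_4))$ and then to $u(x;Y(n_1-n_2-n_3+n_4))$. That last identification would require $A-B\overset{d}{=}Y(p-q)$ for independent $A\sim Y(p)$, $B\sim Y(q)$, equivalently $e^{-p\Psi(\xi)-q\Psi(-\xi)}=e^{-(p-q)\Psi(\xi)}$, which forces $\Psi(\xi)+\Psi(-\xi)\equiv 0$ and hence $Y$ a pure drift. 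Your instinct that the reduction to an axis-anchored rectangle is the ``delicate step'' is therefore well founded, but no argument will close it: outside the degenerate case $Y(t)=bt$, the reduction is not distribution-preserving.
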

\begin{proof}
It is sufficient to show that for any $(h,k)\in\mathbb{R}^2_+$, the distribution of 
\begin{equation*}
	\Delta_{h,k}Y^*(s,t)=Y^*(s+h,t+k)-Y^*(s+h,t)-Y^*(s,t+k)+Y^*(s,t)
\end{equation*}
is independent of the choice of $(s,t)$. Let $u(x;\Delta_{h,k}Y^*(s,t))$ and $u(x;Y(t))$ be the density functions of $\Delta_{h,k}Y^*(s,t)$ and $Y(t)$, respectively. Also, let $p(n_1,n_2,n_3,n_4)=\mathrm{Pr}\{\mathscr{N}(s+h,t+k)=n_1,\mathscr{N}(s+h,t)=n_2,\mathscr{N}(s,t+k)=n_3,\mathscr{N}(s,t)=n_4\}$, $n_i\ge0$, $i=1,2,3,4$. Then, from (\ref{tclp1}), we have
\begin{align}
	u(x;\Delta_{h,k}Y^*(s,t))&=\sum_{\substack{n_i\ge0\\i=1,2,3,4}}p(n_1,n_2,n_3,n_4)u(x;Y(n_1)-Y(n_2)-Y(n_3)+Y(n_4))\nonumber\\
	&=\sum_{\substack{n_i\ge0\\i=1,2,3,4\\n_2\ge n_3}}p(n_1,n_2,n_3,n_4)u(x;Y(n_1)-Y(n_2)-Y(n_3)+Y(n_4))\nonumber\\
	&\ \ +\sum_{\substack{n_i\ge0\\i=1,2,3,4\\
	n_2<n_3}}p(n_1,n_2,n_3,n_4)u(x;Y(n_1)-Y(n_2)-Y(n_3)+Y(n_4))\nonumber\\
    &=\sum_{\substack{n_i\ge0\\i=1,2,3,4\\n_2\ge n_3}}p(n_1,n_2,n_3,n_4)\int_{-\infty}^{\infty}u(x+y;Y(n_1)-Y(n_2))u(y;Y(n_3)-Y(n_4))\,\mathrm{d}y\nonumber\\
    &\ \ +\sum_{\substack{n_i\ge0\\i=1,2,3,4\\
    		n_2<n_3}}p(n_1,n_2,n_3,n_4)\int_{-\infty}^{\infty}u(x+y;Y(n_1)-Y(n_3))u(y;Y(n_2)-Y(n_4))\,\mathrm{d}y\label{indin}\\
    &=\sum_{\substack{n_i\ge0\\i=1,2,3,4\\n_2\ge n_3}}p(n_1,n_2,n_3,n_4)\int_{-\infty}^{\infty}u(x+y;Y(n_1-n_2))u(y;Y(n_3-n_4))\,\mathrm{d}y\nonumber\\
    &\ \ +\sum_{\substack{n_i\ge0\\i=1,2,3,4\\
    		n_2<n_3}}p(n_1,n_2,n_3,n_4)\int_{-\infty}^{\infty}u(x+y;Y(n_1-n_3))u(y;Y(n_2-n_4))\,\mathrm{d}y\label{ststin}\\
    &=  \sum_{\substack{n_i\ge0\\i=1,2,3,4\\n_2\ge n_3}}p(n_1,n_2,n_3,n_4)u(x;Y(n_1-n_2)-Y(n_3-n_4))\nonumber\\
    &\ \ +\sum_{\substack{n_i\ge0\\i=1,2,3,4\\n_2<n_3}}p(n_1,n_2,n_3,n_4)u(x;Y(n_1-n_3)-Y(n_2-n_4))\nonumber\\
    &=  \sum_{\substack{n_i\ge0\\i=1,2,3,4\\n_2\ge n_3}}p(n_1,n_2,n_3,n_4)u(x;Y(n_1-n_2-n_3+n_4))\nonumber\\ &\ \ +\sum_{\substack{n_i\ge0\\i=1,2,3,4\\n_2<n_3}}p(n_1,n_2,n_3,n_4)u(x;Y(n_1-n_3-n_2+n_4))\nonumber\\
    &= \sum_{\substack{n_i\ge0\\i=1,2,3,4}}p(n_1,n_2,n_3,n_4)u(x;Y(n_1-n_2-n_3+n_4))\nonumber\\
    &=u(x;Y(\mathscr{N}(s+h,t+k)-\mathscr{N}(s+h,t)-\mathscr{N}(s,t+k)+\mathscr{N}(s,t))),\nonumber
\end{align}
where (\ref{indin}) and (\ref{ststin}) follow by using the independent and stationary increments properties of the L\'evy process $\{Y(t),\ t\ge0\}$, respectively. As the PRF has stationary increments, so does the process (\ref{tclp1}). This completes the proof.
\end{proof}

Next, we consider the  PRF with constant drift as the time-changing component. That is, we consider the following two parameter time-changed process:
\begin{equation*}
	Y(\mathscr{N}(t_1,t_2)+at_1t_2),\ (t_1,t_2)\in\mathbb{R}^2_+,\ a>0,
\end{equation*}
where $\{\mathscr{N}(t_1,t_2)+at_1t_2,\ (t_1,t_2)\in\mathbb{R}^2_+\}$ is defined in (\ref{dprfd=2}) and it is independent of the L\'evy process $\{Y(t),\ t\ge0\}$ whose L\'evy exponent is given in (\ref{lexp}).

\begin{proposition}\label{lprop}
	Let $Y_k$, $k=1,2,\dots$ be independent random variables such that $Y_k\sim Y(1)$. Then, for all $(t_1,t_2)\in\mathbb{R}^2_+$, we have
	\begin{equation}\label{tclpdrp}
		Y(\mathscr{N}(t_1,t_2)+at_1t_2)\overset{d}{=}\sum_{k=0}^{\mathscr{N}(t_1,t_2)}Y_k+Y(at_1t_2),
	\end{equation}
	where $\overset{d}{=}$ denotes the equality in distribution.
\end{proposition}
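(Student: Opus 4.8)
The plan is to establish the distributional identity (\ref{tclpdrp}) by showing that the two sides have the same characteristic function and then invoking the uniqueness theorem for characteristic functions. The whole argument rests on conditioning on the value of the time-change $\mathscr{N}(t_1,t_2)$, on the independent and stationary rectangular increments encoded in the scalar Lévy structure of $\{Y(t),\ t\ge0\}$, and on the independence of $\mathscr{N}$ from $Y$ and from the copies $Y_k$.

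First I would record that, for the Lévy process with symbol (\ref{lexp}), one has $\mathbb{E}e^{i\xi Y(t)}=e^{-t\Psi(\xi)}$ for every $t\ge0$, and in particular $\mathbb{E}e^{i\xi Y_k}=e^{-\Psi(\xi)}$ for each $k$. For the left-hand side I would condition on $\{\mathscr{N}(t_1,t_2)=n\}$, which is legitimate since $\mathscr{N}$ and $Y$ are independent, and use this exponent identity to write $\mathbb{E}e^{i\xi Y(n+at_1t_2)}=e^{-(n+at_1t_2)\Psi(\xi)}$. Factoring out the deterministic drift contribution $e^{-at_1t_2\Psi(\xi)}$ leaves the probability generating function of $\mathscr{N}(t_1,t_2)$ evaluated at $e^{-\Psi(\xi)}$, giving
\[
\mathbb{E}e^{i\xi Y(\mathscr{N}(t_1,t_2)+at_1t_2)}=e^{-at_1t_2\Psi(\xi)}\,\mathbb{E}\big(e^{-\Psi(\xi)}\big)^{\mathscr{N}(t_1,t_2)}.
\]

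Next I would treat the right-hand side. Because $Y(at_1t_2)$ is independent of both $\mathscr{N}(t_1,t_2)$ and the i.i.d.\ family $\{Y_k\}$, its characteristic function factors as the product $\mathbb{E}e^{i\xi Y(at_1t_2)}\cdot\mathbb{E}e^{i\xi\sum_k Y_k}$, whose first factor is $e^{-at_1t_2\Psi(\xi)}$. For the random sum I would again condition on $\{\mathscr{N}(t_1,t_2)=n\}$ and use the independence of $\{Y_k\}$ from $\mathscr{N}$ together with $\mathbb{E}e^{i\xi Y_k}=e^{-\Psi(\xi)}$ to obtain exactly $\mathbb{E}\big(e^{-\Psi(\xi)}\big)^{\mathscr{N}(t_1,t_2)}$, the same generating function appearing on the left. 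Since the two characteristic functions agree for all $\xi\in\mathbb{R}$, the equality in distribution (\ref{tclpdrp}) follows.

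I do not expect a genuinely hard step; the only points requiring care are the independence bookkeeping—$\mathscr{N}$, the family $\{Y_k\}$, and the drift-increment $Y(at_1t_2)$ must be mutually independent for the conditioning and the factorizations to be valid—and the interpretation of the random sum, which should be read so that conditionally on $\mathscr{N}(t_1,t_2)=n$ there are exactly $n$ independent copies of $Y(1)$, with the empty sum understood as $0$ when $\mathscr{N}(t_1,t_2)=0$. An equivalent pathwise route, splitting $Y(n+at_1t_2)$ into the independent increments over $[0,at_1t_2]$ and $[at_1t_2,at_1t_2+n]$ and identifying the latter with a sum of $n$ copies of $Y(1)$ by stationarity, yields the same conclusion.
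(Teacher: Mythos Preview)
Your proposal is correct and follows essentially the same route as the paper: both compute the characteristic functions of the two sides by conditioning on $\mathscr{N}(t_1,t_2)$, use $\mathbb{E}e^{i\xi Y(t)}=e^{-t\Psi(\xi)}$, and recognize the common factor $\mathbb{E}\big(e^{-\Psi(\xi)}\big)^{\mathscr{N}(t_1,t_2)}$ (the paper additionally writes this out explicitly as $\exp(\lambda t_1t_2(e^{-\Psi(\xi)}-1))$ via the Poisson pgf). Your remark about the indexing of the random sum is apt: for the identity to hold one needs exactly $\mathscr{N}(t_1,t_2)$ copies of $Y(1)$, so the lower limit $k=0$ in the statement should be read as an empty sum when $\mathscr{N}(t_1,t_2)=0$.
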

\begin{proof}
	Let $\Psi(\xi)$ be the L\'evy exponent of $\{Y(t),\ t\ge0\}$ as defined in (\ref{lexp}). Then, the characteristic function of the left hand side of (\ref{tclpdrp}) is given by
	\begin{align*}
		\mathbb{E}\exp(i\xi Y(\mathscr{N}(t_1,t_2)+at_1t_2))&=\mathbb{E}(\mathbb{E}(\exp(i\xi Y(\mathscr{N}(t_1,t_2)+at_1t_2))|\mathscr{N}(t_1,t_2)+at_1t_2))\nonumber\\
		&=\mathbb{E}\exp((\mathscr{N}(t_1,t_2)+at_1t_2)\Psi(\xi))\nonumber\\
		&=e^{at_1t_2\Psi(\xi)}\exp(\lambda t_1t_2(e^{\Psi(\xi)}-1))\\
		&=e^{at_1t_2\Psi(\xi)}\mathbb{E}\exp(\mathscr{N}(t_1,t_2)\Psi(\xi))\\
		&=e^{at_1t_2\Psi(\xi)}\mathbb{E}\bigg(\mathbb{E}\bigg(i\xi\sum_{k=0}^{\mathscr{N}(t_1,t_2)}Y_k\bigg|\mathscr{N}(t_1,t_2)\bigg)\bigg)\\
		&=\mathbb{E}\exp\bigg(i\xi\bigg(\sum_{k=0}^{\mathscr{N}(t_1,t_2)}Y_k+Y(at_1t_2)\bigg)\bigg),
	\end{align*}
which is the characteristic function of the right hand side of (\ref{tclpdrp}). This completes the proof.
\end{proof}	

\begin{remark}
	Let $\{S_\gamma(t), t\ge0\}$, $0<\gamma<1$ and $\{S_\alpha(t),\ t\ge0\}$, $0<\alpha<1$ be independent stable subordinators, and let $\{Y(t),\ t\ge0\}$ be a L\'evy process that is independent of both theses subordinators. Then,
	\begin{equation*}
		Y(\mathscr{N}(S_\gamma(t_1),t_2)+aS_\alpha(t_1)t_2)\overset{d}{=}\sum_{k=0}^{\mathscr{N}(S_\gamma(t_1),t_2)}Y_k+Y(aS_\alpha(t_1)t_2),
	\end{equation*}
	where $Y_k$'s are as defined in Proposition \ref{lprop}. Its proof follows from (\ref{tclpdrp}).
\end{remark}
\section*{Acknowledgement}
The second author thanks Government of India for the grant of Prime Minister's Research Fellowship, ID 1003066.

\end{document}